\documentclass[11pt,twoside]{article}

\setlength{\textwidth}{160mm} \setlength{\textheight}{210mm}
\setlength{\parindent}{8mm} \frenchspacing
\setlength{\oddsidemargin}{0pt} \setlength{\evensidemargin}{0pt}
\thispagestyle{empty}
\usepackage{mathrsfs,amsfonts,amsmath,amssymb}
\usepackage{latexsym}
\usepackage{comment} 
\pagestyle{myheadings}
\newtheorem{satz}{Theorem}

\newtheorem{theorem}[satz]{Theorem}
\newtheorem{lemma}[satz]{Lemma}

\newtheorem{corollary}[satz]{Corollary}
\newtheorem{remark}[satz]{Remark}

\def\Z{\mathbb {Z}}
\def\F{\mathbb {F}}
\def\E{\mathsf{E}}

\def\a{\alpha}

\def\C{\mathbb{C}}

\def\d{\delta}
\def\o{\omega}
\def\({\big (}
\def\){\big )}

\def\G{\Gamma}

\def\le{\leqslant}
\def\ge{\geqslant}
\def\_phi{\varphi}
\def\eps{\varepsilon}

\def\Gr{{\mathbf G}}
\def\FF{\widehat}

\def\D{\Delta}

\def\T{\mathsf{T}}

\def\C{\mathbb{C}}

\def\SL{{\rm SL}}
\def\GL{{\rm GL}}

\def\Cay{{\rm Cay}}

\author{I.D. Shkredov}
\title{On a girth--free variant of the Bourgain--Gamburd machine 
\footnote{This work is supported by the Russian Science Foundation under grant 19--11--00001.}}
\date{}
\begin{document}

	\maketitle


\begin{center}
	Annotation.
\end{center}

{\it \small
    A variant of the Bourgain--Gamburd machine without using any girth bounds is obtained.  
    Also, we find series of applications of 
    the Bourgain--Gamburd machine 
    to problems of Additive Combinatorics, Number Theory and Probability. 
}
\\

\section{Introduction}

    Let $p$ be a prime number and $A \subseteq \SL_2 (\F_p)$ be a set of matrices.
    Suppose for simplicity that $A=A^{-1}$, that is, $A$ is a symmetric set. One can consider the Cayley graph $\Cay (\SL_2(\F_p), A)$ (here $\Cay (\SL_2(\F_p), A) = (\SL_2(\F_p), E)$ and the set of edges $E$ is defined as  $(x,y)\in E$ iff $y=xa$, $a\in A$)  of the set $A$ and study the properties of $\Cay (\SL_2(\F_p), A)$. 
    It is a    
    fundamental 
    problem to show that $A$ is an expander  \cite{Lubotzky123}, \cite{H} under some conditions on $A$. 
    Equivalently, 
    we want to estimate nontrivially the operator norm of all representations of the Fourier transform of the characteristic function of $A$, i.e., $\FF{A}(\rho)$,  $\rho \neq 1$.
    In \cite{BG} (also, see \cite{BGS}) Bourgain and Gamburd obtained 

\begin{theorem}
    Let $A \subseteq \SL_2 (\F_p)$ be a generating set, $A=A^{-1}$, $\tau \in (0,1)$ be a real number  and 
$$
    \mathrm{girth} (\Cay (\SL_2(\F_p), A) \ge \tau \log_{|A|} p \,.
$$
    Then $\Cay (\SL_2(\F_p),A)$ is an expander. 
\label{t:BG_girth}
\end{theorem}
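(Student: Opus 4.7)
The plan is to run the three-step Bourgain--Gamburd machine: (i) a non-concentration bound for the random walk (where the girth hypothesis enters); (ii) an $\ell^2$-flattening iteration driven by the Helfgott--Bourgain--Gamburd product theorem in $\SL_2(\F_p)$; and (iii) a quasirandomness step via Frobenius's representation-dimension bound. Write $G := \SL_2(\F_p)$, let $\mu$ be the uniform probability measure on $A$, and denote by $\mu^{(k)}$ its $k$-fold convolution. By a standard reduction (Cheeger/Kesten, Plancherel), expansion of $\Cay(G,A)$ follows from
\[
    \|\mu^{(k)} - u\|_2 \le |G|^{-1/2-\delta}
\]
for some $k = O(\log p)$ and $\delta > 0$, where $u$ is uniform on $G$; equivalently, from a uniform bound $\|\FF\mu(\rho)\|_{op} \le 1 - \delta'$ on all nontrivial irreducible representations $\rho$.

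Set $k_0 := \lfloor \tau \log_{|A|} p / 4 \rfloor$. The girth hypothesis implies that every nontrivial word of length $\le 2k_0$ in the alphabet $A$ represents a non-identity element of $G$, so $\mu^{(k_0)}$ is uniform on $|A|^{k_0}$ distinct elements and $\|\mu^{(k_0)}\|_2^2 = |A|^{-k_0}$. More importantly, every proper closed subgroup $H \subsetneq G$ is cut out by a nontrivial polynomial, so the number of length-$k_0$ words landing in $H$ is $\le p^{O(1)}$, whence $\mu^{(k_0)}(H) \le p^{-\eta}$ for some $\eta = \eta(\tau) > 0$. This is the non-concentration input. Now iterate the $\ell^2$-flattening lemma: as long as $\nu := \mu^{(\ell)}$ satisfies $\|\nu\|_2^2 > |G|^{-1+\eps}$, flattening yields either a multiplicative drop $\|\nu * \nu\|_2^2 \le p^{-\eps'}\|\nu\|_2^2$, or an approximate subgroup $X \subseteq G$ with $|X \cdot X \cdot X| \le |X|^{1+\eps'}$ carrying a positive fraction of $\nu$. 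Helfgott's $\SL_2(\F_p)$-product theorem forces any such $X$ into a genuine proper subgroup, and then the non-concentration bound (which propagates under convolutions) excludes this alternative. After $O(\log p)$ doublings we reach $\|\mu^{(k')}\|_2^2 \le |G|^{-1+\eps}$ with $k' = O(\log p)$.

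Finally invoke quasirandomness: every nontrivial irreducible representation of $\SL_2(\F_p)$ has dimension at least $(p-1)/2$ (Frobenius). Plancherel gives $|G|\|\nu\|_2^2 = \sum_\rho \dim(\rho)\|\FF\nu(\rho)\|_{HS}^2$, so for each nontrivial $\rho$,
\[
    \|\FF\nu(\rho)\|_{op}^2 \le \|\FF\nu(\rho)\|_{HS}^2 \le \frac{|G|\|\nu\|_2^2}{\dim(\rho)} \le \frac{2|G|\|\nu\|_2^2}{p-1};
\]
combined with $\|\mu^{(k')}\|_2^2 \le |G|^{-1+\eps}$ this produces $\|\FF{\mu^{(k')}}(\rho)\|_{op} \le |G|^{-\delta}$ uniformly in nontrivial $\rho$, giving the spectral gap. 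The main obstacle is the second step: the flattening iteration must work at every scale between $|A|^{-k_0}$ and $|G|^{-1+\eps}$, and the concentration alternative has to be excluded at every scale using a non-concentration estimate that survives repeated self-convolutions. Verifying that the girth-derived bound does propagate in this way, and pairing it correctly with the Balog--Szemer\'edi--Gowers dichotomy underlying the flattening lemma, is the technical heart of the Bourgain--Gamburd argument.
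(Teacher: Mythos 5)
The paper does not actually prove this statement: Theorem \ref{t:BG_girth} is quoted verbatim from Bourgain--Gamburd \cite{BG}, so there is no internal proof to compare against, and your proposal must stand on its own as a rendition of the argument of \cite{BG}. At the level of strategy you have reproduced the right three-stage machine (girth-based non-concentration, $\ell^2$-flattening via Helfgott's product theorem and Balog--Szemer\'edi--Gowers, quasirandomness via the Frobenius lower bound $\dim\rho\ge (p-1)/2$), and the quasirandomness step and the remark that coset non-concentration is preserved under further convolution are correct as stated.

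However, the non-concentration stage --- the one place where the girth hypothesis actually enters --- is not correct as written. First, $\mu^{(k_0)}$ is \emph{not} uniform on $|A|^{k_0}$ distinct elements: since $A=A^{-1}$, length-$k_0$ words can backtrack, so the walk is the push-forward of the free (or free-product) walk, and the correct statement is that within the injectivity radius guaranteed by the girth one has $\|\mu^{(k_0)}\|_2^2$ bounded by the return probability of the free-group walk, i.e.\ Kesten's theorem gives $\|\mu^{(k_0)}\|_2^2\le |A|^{-(1-o(1))k_0}$; the flat-measure claim is false, though repairable. Second, and more seriously, your justification of $\mu^{(k_0)}(H)\le p^{-\eta}$ for proper subgroups $H$ --- ``$H$ is cut out by a nontrivial polynomial, so at most $p^{O(1)}$ words of length $k_0$ land in $H$'' --- proves nothing: the total number of words of length $k_0$ is already $|A|^{k_0}\le p^{\tau/4}=p^{O(1)}$, so a bound of $p^{O(1)}$ gives no saving, and a polynomial condition on $G$ does not by itself bound the number of \emph{words} satisfying it. The actual argument in \cite{BG} uses the subgroup structure of $\SL_2(\F_p)$ (every proper subgroup is contained in a Borel, a dihedral group, or a bounded exceptional group, hence is virtually abelian), so that if too many short words fell into a coset $gH$ one would obtain a nontrivial relation (e.g.\ vanishing commutators) among words shorter than the girth, contradicting freeness; moreover one needs this for all cosets $gH$, not just subgroups, before the propagation remark applies. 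Finally, you explicitly defer ``the technical heart'' (that the non-concentration input pairs correctly with the BSG dichotomy at every scale of the flattening iteration), so even granting the repairs above, what you have is an outline of the known proof rather than a proof.
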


    Recall that the girth of a finite graph is the length of its minimal cycle. 
    The proof of the result above   involves  some calculations with free groups \cite{BG}.  
    Moreover, usually in applications,  
    Theorem \ref{t:BG_girth} 
    is used for a set 
    of generators $A\subseteq \SL_2(\F_p)$ such that 
    $A$ generates a free subgroup of $\SL_2(\Z)$, e.g., 
\begin{equation}\label{f:s,t_free}
A = \left \langle 
\left(\begin{array}{cc}
1 & s \\
0 & 1
\end{array}\right) 
\,, 
\left(\begin{array}{cc}
1 & 0 \\
t & 1
\end{array}\right) \right\rangle \,, \quad \quad |st|\ge 4 
\end{equation} 
    or for a randomly chosen $A$ (it is known that for such $A$ the Cayley graph $\Cay (\SL_2(\F_p), A)$ has large girth), see \cite{BG}.
    To demonstrate transparently the strength of the result above we formulate a consequence of Theorem \ref{t:BG_girth}, which was obtained in \cite{NG_S}  
    (actually, in the only case $g(x)=-1/x$, see the complete proof in Theorem \ref{t:BG'} below) and which was found further applications to the Zaremba conjecture  in  \cite{MMS2}. 
    Recall that $\SL_2(\F_p)$ acts on the projective line via M\"obius transformations: $x\to gx = \frac{ax+b}{cx+d}$, where $g=\left(\begin{array}{cc}
a & b \\
c & d
\end{array}\right)$.

\begin{theorem}
    Let $N\ge 1$ be a sufficiently large integer,  $A,B\subseteq \F_p$ be sets, and $g\in \SL_2 (\F_p)$ be a non--linear map. 
    Then there is 
    an absolute 
    constant $\kappa>0$ such that 
\begin{equation}\label{f:BG'_intr}
    |\{ g(c+a) = c+b ~:~ c \in 2\cdot [N],\,a\in A,\, b\in B \}|
    - \frac{|A||B|N}{p}
    \ll_g \sqrt{|A||B|} N^{1-\kappa} \,.
\end{equation}
    In particular, the Cayley graph $\Cay (\SL_2(\F_p),S)$, where   
    $$
S=\left\{
\left(\begin{array}{cc}
1 & -2j \\
0 & 1
\end{array}\right)
g
\left(\begin{array}{cc}
1 & 2j \\
0 & 1
\end{array}\right) ~:~ 1 \leq j \leq N\right\} \subset \SL_2 (\F_p) \,,
$$
is an expander with $\|\FF{S}(\rho)\|\ll |S|^{1-\kappa}$, 
$\kappa >0$ for all non--trivial  unitary irreducible representations $\rho \neq 1$.
\label{t:BG'_intr}
\end{theorem}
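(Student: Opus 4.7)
The plan is to recognise that the second assertion of the theorem is exactly the conclusion of Theorem \ref{t:BG_girth} applied to $T := S \cup S^{-1}$, while \eqref{f:BG'_intr} is a routine consequence via Fourier analysis on the projective line. Denote $u_t = \left(\begin{smallmatrix} 1 & t \\ 0 & 1 \end{smallmatrix}\right)$ and $s_j = u_{-2j}\, g\, u_{2j}$, so that $s_j$ acts on $\mathbb P^1(\F_p)$ by $x \mapsto g(x+2j) - 2j$. Hence the equation $g(c+a)=c+b$ with $c=2j$ is equivalent to $s_j(a)=b$, and the left side of \eqref{f:BG'_intr} equals $\langle 1_A, T_S\, 1_B\rangle_{L^2(\mathbb P^1(\F_p))}$ with $T_S f(x) := \sum_{s \in S} f(s\cdot x)$.

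Two hypotheses of Theorem \ref{t:BG_girth} must be checked for $T$. \emph{Generation:} since $g$ is non-linear, $g$ does not fix $\infty$, hence the $s_j$ together with their inverses are not simultaneously contained in any proper algebraic subgroup of $\SL_2$; strong approximation then gives $\langle T\rangle = \SL_2(\F_p)$ for $p$ large. \emph{Girth:} choose any lift of $g$ to $\SL_2(\Z)$. A nontrivial word $s_{j_1}^{\pm 1}\cdots s_{j_k}^{\pm 1}$ has matrix entries of size $\ll_g N^{O(k)}$, so if it reduces to the identity modulo $p$ then either it already equals the identity in $\SL_2(\Z)$ (excluded by the free--subgroup/ping--pong structure of $\langle g, u_2\rangle$, or a finite--index subgroup thereof, which is where the $\ll_g$ dependence enters) or $p$ divides a nonzero entry, forcing $k \gg \log_N p$. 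Theorem \ref{t:BG_girth} in its quantitative form then yields $\|\FF{T}(\rho)\| \ll |T|^{1-\k}$ for every nontrivial unitary irreducible $\rho$, which is the expander assertion.

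To deduce \eqref{f:BG'_intr}, decompose $L^2(\mathbb P^1(\F_p))$ into $\SL_2(\F_p)$--isotypic components. The trivial component contributes $\frac{|A||B||S|}{p} + O(1)$, and on its orthogonal complement $\|T_S\|_{\mathrm{op}} \ll \max_{\rho \neq 1} \|\FF{S}(\rho)\| \ll |S|^{1-\k}$ by standard representation theory (each matrix coefficient of a $\rho$ appearing in $L^2(\mathbb P^1)$ is controlled by $\|\FF{S}(\rho)\|$). Cauchy--Schwarz, together with $\|1_A\|_2 = |A|^{1/2}$, $\|1_B\|_2 = |B|^{1/2}$, and $|S| = N$, produces the error $\sqrt{|A||B|}\,N^{1-\k}$.

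The principal obstacle is the girth step, and more precisely the dependence on $g$: one must show that alternating products of $g$ and $u_2$ in $\SL_2(\Z)$ have matrix entries growing super--polynomially and do not cancel, which relies on a ping--pong type argument for $\langle g, u_2\rangle$. Once the girth bound is in place, the rest is a black--box invocation of Theorem \ref{t:BG_girth} together with standard harmonic analysis on $\SL_2(\F_p)$.
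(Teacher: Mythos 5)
The core of your argument — verifying the girth hypothesis so that Theorem \ref{t:BG_girth} can be invoked as a black box — breaks down, and in fact it fails already in the archetypal case of the theorem. Take $g=w=(0(-1)|10)$, i.e. $g(x)=-1/x$, which is non--linear. Then $s_j^2=u_{-2j}w^2u_{2j}=-I$ for every $j$, so $s_j^{}s_j^{}s_k^{-1}s_k^{-1}=I$ is a reduced relation of length $4$: the girth of $\Cay(\SL_2(\F_p),S\cup S^{-1})$ is at most $4$, nowhere near $\tau\log_N p$. The same happens for any $g$ of bounded order. Moreover, even when no torsion is present, your lifting step does not give the required bound: $g$ is an arbitrary element of $\SL_2(\F_p)$ (it varies with $p$, so one cannot "fix" it), any lift to $\SL_2(\Z)$ has entries of size up to $p$, and a word of length $k$ then has entries of size at most $(\|g\|N)^{O(k)}$; a relation mod $p$ only forces $k\gg \log p/\log(\|g\|N)$, which is $O(1)$ when $\|g\|\sim p$, not $\gg\log_N p$. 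Finally, $\langle g,u_2\rangle$ (or any lift of it) need not be free or admit a ping--pong structure at all — the lift of $w$ together with $u_2$ contains torsion — so the "free--subgroup structure of $\langle g,u_2\rangle$" you appeal to is not available. Since the girth hypothesis is genuinely false in this generality, the expander conclusion cannot be obtained by citing Theorem \ref{t:BG_girth}; avoiding that hypothesis is precisely the point of the paper.

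What the paper does instead (Theorem \ref{t:BG'}) is to run the Bourgain--Gamburd machine not on the Cayley graph of $S$ but on the set $H=\{u_{-\beta}\,g\,u_{\alpha}\}$, whose relevant products all lie in $(HH^{-1})^l$, i.e. $g$ and $g^{-1}$ strictly alternate. Writing $g=u_{t_1}wd\,u_{t_2}$ via the Bruhat decomposition, such products collapse to words in $u_t$ and $u^*_s$ with even parameters, see \eqref{f:prod_form-}, and the ping--pong Lemma \ref{l:ping-pong_g} (for $u^*_s,u_t$ with $|s|,|t|\ge 2$, which is why the shifts are taken in $2\cdot[N]$) supplies the $\ell^\infty$ and subgroup non--concentration estimates that the girth bound would normally provide; relations such as $g^m=1$ simply never occur in this alternating pattern, and the dependence on $g$ enters only through $\lambda(g)$ (Remark \ref{r:lambda}). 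Your final step — deducing \eqref{f:BG'_intr} from $\|\FF{S}(\rho)\|\ll|S|^{1-\kappa}$ by harmonic analysis on $L^2(\mathbb{P}^1(\F_p))$ — is fine (up to replacing the main term $|A||B|N/p$ by $|A||B|N/(p+1)$, a negligible change), but it is the non--concentration input, not this reduction, that carries the theorem, and your proposed route to it does not work.
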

    
    As we said before the  proof of Theorem
    \ref{t:BG'_intr} 
    uses some calculations 
    in 
    free groups (one should take 
    $s=t=2$ in \eqref{f:s,t_free}),  
    as well as some 
    good lower bounds for the girth of the correspondent Cayley graph.
    We avoid 
    to use 
    this technique 
    and obtain a more general (in the sense that the set $S$ below can be much more general than $(c,c)$, $c\in 2\cdot [N]$) and more applicable result.

\begin{theorem}
    Let $\delta \in (0,1]$, 
    $N\ge 1$ be a sufficiently large integer, $N\le p^{c\delta}$ for an absolute constant $c>0$, 
    $A,B\subseteq \F_p$ be sets, and $g\in \SL_2 (\F_p)$ be a non--linear map. 
    Suppose that $S$ is a set, 
    $S \subseteq [N]\times [N]$, $|S| \ge N^{1+\delta}$.
    Then there is 
    a
    constant $\kappa = \kappa (\d) >0$ such that 
\begin{equation}\label{f:BG_new_intr}
    |\{ g(\a+a) = \beta+b ~:~ (\a,\beta) \in S,\,a\in A,\, b\in B \}|
    - \frac{|S||A||B|}{p}
    \ll_g \sqrt{|A||B|} |S|^{1-\kappa} \,.
\end{equation}
\label{t:BG_new_intr}
\end{theorem}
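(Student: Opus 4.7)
The plan is to recast the count in \eqref{f:BG_new_intr} as an operator-norm estimate on $L^{2}(\mathbb{P}^{1}(\F_{p}))$ and then apply a girth-free variant of the Bourgain--Gamburd machine to the multiset
\[
\Sigma \;=\; \bigl\{\, h_{\alpha,\beta} := U_{-\beta}\, g\, U_{\alpha} \;:\; (\alpha,\beta)\in S\,\bigr\} \subset \SL_{2}(\F_{p}),\qquad U_{\gamma}=\bigl(\begin{smallmatrix}1 & \gamma\\ 0 & 1\end{smallmatrix}\bigr).
\]
The identity $g(\alpha+a)=\beta+b$ is equivalent to $h_{\alpha,\beta}\cdot a = b$ under the M\"obius action of $\SL_{2}(\F_{p})$ on $\mathbb{P}^{1}(\F_{p})$, so the left-hand side of \eqref{f:BG_new_intr} equals $\langle T_{\Sigma}\mathbf{1}_{A},\mathbf{1}_{B}\rangle$, where $T_{\Sigma}=\sum_{h\in\Sigma}\rho_{0}(h)$ is the operator induced by the quasi-regular representation $\rho_{0}$. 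Splitting $\rho_{0}=\mathbf{1}\oplus \tilde\rho_{0}$, the trivial sub-representation produces the main term $|S||A||B|/p$ up to a negligible boundary error, so the conclusion reduces by Cauchy--Schwarz to the operator-norm bound $\|\tilde T_{\Sigma}\|_{\mathrm{op}}\ll_{g}|\Sigma|^{1-\kappa}$ on the orthogonal complement of the constants.

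To establish this operator-norm bound I run the standard Bourgain--Gamburd scheme. By Frobenius' dimension bound, every non-trivial irreducible representation of $\SL_{2}(\F_{p})$ has dimension at least $(p-1)/2$, and the usual $L^{2}$-flattening reduction shows that it suffices to verify
\[
\|\mu^{(2k)}\|_{2}^{2}\le |\SL_{2}(\F_{p})|^{\kappa-1},\qquad \mu=\tfrac{1}{2|\Sigma|}\bigl(\mathbf{1}_{\Sigma}+\mathbf{1}_{\Sigma^{-1}}\bigr),
\]
for some $k=O(\log p)$. Iterated flattening is driven by Helfgott's product theorem for $\SL_{2}(\F_{p})$; its input is the non-concentration estimate
\[
\mu^{(k)}(xH)\;\le\;|\Sigma|^{-\eta(\delta)}
\]
for every $x\in\SL_{2}(\F_{p})$ and every proper connected algebraic subgroup $H\subseteq\SL_{2}$ (Borel, split and non-split maximal tori, and their normalizers). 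In the classical Theorem \ref{t:BG_girth} this input is read off from the girth hypothesis; here it has to be derived from the structure of $S$ instead.

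The girth-free substitute exploits the non-linearity $c\neq 0$ of $g=\bigl(\begin{smallmatrix}a&b\\ c&d\end{smallmatrix}\bigr)$. A direct matrix computation shows that $h_{\alpha,\beta}\in xH$ for a fixed coset $xH$ is equivalent to $(\alpha,\beta)$ lying on an algebraic curve $C_{x,H}\subset\F_{p}^{2}$ of degree bounded uniformly in terms of $g$ and the class of $H$; hence $|\Sigma\cap xH|\le |S\cap C_{x,H}|$. For $H$ a Borel, $C_{x,H}$ is an affine line, giving at most $N$; for $H$ a torus or its normalizer one gets $O(1)$ points of $S$ on a conic away from degenerate cosets. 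The hypothesis $|S|\ge N^{1+\delta}$ converts each of these into a power saving $|\Sigma|^{-\eta(\delta)}$, and the bound $N\le p^{c\delta}$ ensures that the $\F_{p}$-count of points of $S\subseteq[N]^{2}$ on these curves agrees with the corresponding $\Z$-count, so no ``wrap-around'' artefact can spoil the estimate. Iteration to higher $k$ is automatic: once $h_{1},\dots,h_{k-1}$ are fixed, the condition $h_{1}\cdots h_{k}\in xH$ becomes a single coset condition on $h_{k}$.

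The main obstacle is to discharge this non-concentration bound uniformly in the conjugating element $x$ and then to plug it cleanly into the Bourgain--Gamburd flattening recursion. The Borel case at $k=1$ is immediate from $c\neq 0$, but for the torus classes one has to verify that the quadratic curves $C_{x,H}$ are genuinely non-degenerate for every $x$ --- i.e.\ do not collapse to pairs of lines that could capture an unusually large fraction of $S$; a uniform degree bound combined with a Bezout-type argument handles this, possibly after a Pl\"unnecke--Ruzsa type rearrangement of $S$. The precise numerology fixing both $\kappa=\kappa(\delta)$ and the admissible constant $c$ in $N\le p^{c\delta}$ then emerges from balancing these algebraic inputs against the combinatorial lower bound $|S|\ge N^{1+\delta}$ along the flattening iteration.
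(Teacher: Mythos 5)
You follow the same overall strategy as the paper: the same matrices $h_{\alpha,\beta}=u_{-\beta}\,g\,u_{\alpha}$, the reduction of \eqref{f:BG_new_intr} to a spectral bound, and the Bourgain--Gamburd scheme of quasirandomness plus $L^2$--flattening driven by Helfgott, with the required input being non--concentration on cosets of proper subgroups (this is exactly the paper's estimate \eqref{f:K_subgr}). The gap is that you only establish (and only sketch) this input at level one, i.e.\ for a single factor $h_{\alpha,\beta}$, and then declare the extension to $\mu^{(k)}$ ``automatic'' via $\mu^{(k)}(xH)\le\max_{y}\mu(yH)$. That inequality is true, but it can never yield a saving better than a power of $N$ (about $N^{\delta}$ in the Borel case, $|S|$ for point masses), whereas the flattening recursion needs savings which are fixed powers of $p$ at the convolution scale $l\sim\log p/\log N$: the number of flattening steps is about $\log p/\log K$ and must be $O_{\delta}(1)$ in order to output a power saving $|S|^{-\kappa(\delta)}$ (see \eqref{tmp:06.09_1}--\eqref{tmp:06.09_1**}). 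Since the theorem only assumes $N\le p^{c\delta}$ and allows $N$ to be sub--polynomial in $p$, a saving of $N^{\delta}$ is not a power of $p$, the step count blows up, and no uniform $\kappa(\delta)$ comes out. This is precisely why the paper proves the coset bound directly for the $s$--fold products $(HH^{-1})^{s}$ with $(2N)^{2s+1}<2^{-5}p^{1/4}$: it lifts these products to $\SL_2(\Z)$, converts the mod $p$ Borel--coset condition into an integer linear relation by the vanishing--determinant trick \eqref{f:det_A}--\eqref{tmp:17.10_1'}, counts solutions via the divisor function in \eqref{tmp:divisors}, and handles the dihedral subgroups by the explicit elimination \eqref{f:v1}--\eqref{f:v2}; this gives $K\ge p^{\Omega(\delta)}$ uniformly in $N$.

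A second, related omission: you never control the multiplicity of the $l$--fold products, i.e.\ you have no bound on $\|r_{(HH^{-1})^{l}}\|_{\infty}$ beyond the trivial $\|\mu^{(l)}\|_{\infty}\le 1/|S|$, and $|S|\le p^{2c\delta}$ can be tiny compared with any fixed power of $p$. This quantity is the actual girth substitute: it is what rules out concentration of $\mu^{(l)}$ on small approximate subgroups in the flattening dichotomy (the case $H=\{1\}$, which is not among the connected subgroups on your list). In the paper this is supplied by the ping--pong Lemma \ref{l:ping-pong_g}: since all entries of elements of $(HH^{-1})^{l}$ are at most $(2N)^{2l+1}<p$, the products can be read as words in $u_{t},u^{*}_{s}$ over $\Z$, and uniqueness of such words (continued fractions) gives $\|r_{(HH^{-1})^{l}}\|_{\infty}\le |S|^{(2l-1)\delta_{*}}$, i.e.\ an entropy gain which is again $p^{\Omega(\delta)}$. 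Nothing in your write--up plays this role. Finally, a smaller point: for torus/normalizer cosets the claim ``$O(1)$ points of $S$ on a conic by a Bezout--type argument'' is not a proof --- a single curve is not amenable to Bezout, and the whole difficulty is certifying non--degeneracy uniformly in the coset; in the paper this is the explicit resultant computation following \eqref{f:v1}--\eqref{f:v2}, carried out at the product scale. So the architecture of your argument matches the paper, but the two inputs that actually make it girth--free (multiplicity control over $\Z$ for $l$--fold products, and coset non--concentration with savings $p^{\Omega(\delta)}$ at that scale) are missing, and the level--one bounds you propose cannot replace them over the full range of $N$ allowed by the theorem.
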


In Sections \ref{sec:applications}, \ref{sec:shifts} we find several applications of Theorems \ref{t:BG'_intr}, \ref{t:BG_new_intr} to problems of Incidence Geometry in $\F_p \times \F_p$ (Theorem \ref{t:BG_new_intr} itself is a result about incidences between hyperbolae from $S$ and Cartesian product $A\times B$)
and Probability.
We obtain a series of new upper bounds for some equations over $\F_p$
(e.g., see the third part of Theorem \ref{t:shifts} below).  
\begin{corollary}
    Let $X\subseteq \F_p$ be a set, $|X|\le 3p/4$, and $g$ be a non--linear map. 
    Then there is an absolute constant $c>0$ such that 
    for all $s\neq 0$ either   one has $|X\cap (X+s)| \le (1-c)|X|$ or 
    $|g(X) \cap (g(X) + s^{-1})| \le (1-c)|X|$. 
\label{c:rr_intr}
\end{corollary}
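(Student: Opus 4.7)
I argue by contradiction: assume that for some $s \neq 0$ both $|X \cap (X+s)| > (1-c)|X|$ and $|g(X) \cap (g(X)+s^{-1})| > (1-c)|X|$, and aim to force $c \ge c_0$ for an absolute $c_0 > 0$. A preliminary reduction to $s = 1$ is carried out by a diagonal conjugation: replace $X$ by $X' := s^{-1} X$ and $g$ by $\t g(u) := s \cdot g(s u)$. A direct computation shows $\t g \in \SL_2(\F_p)$, $\t g$ is non-linear iff $g$ is, $\t g(X') = s \cdot g(X)$, and consequently $|X' \cap (X'+1)| = |X \cap (X+s)|$ and $|\t g(X') \cap (\t g(X')+1)| = |g(X) \cap (g(X) + s^{-1})|$. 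So I may assume $s = 1$ throughout, renaming $X'$ and $\t g$ back to $X$ and $g$.

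Let $N$ be a large absolute integer to be chosen. Observing that $X$ decomposes into at most $c|X|$ maximal arithmetic progressions of common difference $1$ (because $|X \setminus (X + 1)| \le c|X|$), a simple count gives
$$
X_0 := \{x \in X : x + j \in X \text{ for all } j = 0, 1, \dots, N\}, \qquad |X_0| \ge (1 - Nc)|X|,
$$
and analogously $Y_0 := \{y \in g(X) : y - k \in g(X) \text{ for all } k = 0, 1, \dots, N\}$ with $|Y_0| \ge (1 - Nc)|X|$. Now apply Theorem \ref{t:BG_new_intr} with $A := X$, $B := g(X)$ and $S := [N] \times [N]$, so $\delta = 1$, $|S| = N^2$, and $N \le p^{c \delta}$ is automatic once $p$ is large since $N$ will be absolute. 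The theorem yields the upper bound
$$
|\{g(\alpha + a) = \beta + b : (\alpha, \beta) \in S,\, a \in A,\, b \in B\}| \le \frac{N^2 |X|^2}{p} + O_g(|X| \cdot N^{2 - 2\kappa})
$$
with absolute $\kappa = \kappa(1) > 0$.

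For the matching lower bound, fix $(\alpha, \beta) \in S$. Any $a \in X_0$ with $a + \alpha \in g^{-1}(Y_0)$ gives a valid solution with $b := g(a + \alpha) - \beta$, because $g(a + \alpha) \in Y_0$ and $Y_0 - \beta \subseteq g(X) = B$. The set of such $a$'s has cardinality $|(X_0 + \alpha) \cap g^{-1}(Y_0)|$; since both $X_0 + \alpha$ and $g^{-1}(Y_0)$ lie in $X$ with cardinality $\ge (1 - Nc)|X|$, inclusion--exclusion inside $X$ produces at least $(1 - 2Nc)|X|$ such $a$. Summing over $S$ yields a count of at least $N^2 (1 - 2Nc)|X|$. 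Combining with the upper bound, dividing by $N^2 |X|$, and using $|X|/p \le 3/4$,
$$
1 - 2Nc \le \tfrac{3}{4} + O_g(N^{-2\kappa}),
$$
so fixing $N$ large enough that the error is $< 1/8$ forces $c \ge 1/(16 N) =: c_0$. The main obstacle I anticipate is verifying that the constant implicit in Theorem \ref{t:BG_new_intr} is uniform over non-linear $g \in \SL_2(\F_p)$; otherwise $c_0$ would secretly depend on $s$ through the conjugation step. This uniformity should be traceable from the proof of Theorem \ref{t:BG_new_intr}.
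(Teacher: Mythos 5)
Your overall strategy is sound and close in spirit to the paper's own proof of part 3) of Theorem \ref{t:shifts} (rescale to unit shift, manufacture many solutions of $g(\a+a)=\b+b$ with $(\a,\b)$ ranging over a grid, and play this off against the Bourgain--Gamburd bound); your construction via the robust sets $X_0,Y_0$ at a fixed absolute scale $N$ is in fact a bit cleaner than the paper's decomposition of $X$ into unit-step intervals of length $\sim 1/c$ followed by shifts from $\mathcal{M}=[\zeta L_\omega]$. Two small quantitative slips are harmless: iterating the single-shift hypothesis gives $|X_0|\ge (1-O(N^2c))|X|$, not $(1-Nc)|X|$, and for bounded $p$ (where ``$N$ sufficiently large, $N\le p^{c\delta}$'' fails) one handles the statement trivially since $|X\cap(X+s)|\le |X|-1$ unless $X=\F_p$ or $\emptyset$.

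The genuine gap is exactly the point you defer at the end, and it is the crux of the corollary (the paper itself stresses that ``the main point is the uniformity on $s$''). The implied constant $\ll_g$ in Theorems \ref{t:BG'} and \ref{t:BG_new} is \emph{not} uniform over all non-linear $g\in\SL_2(\F_p)$: in the proof the elements of $HH^{-1}$ have the shape $u_{t_1-\b}\,u^*_{\lambda^2(\a'-\a)}\,u_{\b'-t_1}$ with $\lambda=\lambda(g)$ equal to the lower-left corner of $g$, and the reduction to matrices with small integer entries (ping--pong, divisor bounds) only works when that corner is controlled; so the constant genuinely depends on $g$ through its lower-left entry, and hoping for uniformity over all non-linear $g$ would make your final constant depend on $s$ through $\t g$. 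What saves the argument — and what the paper uses via Remark \ref{r:lambda} — is weaker but sufficient: the dependence on $g$ is only on the lower-left corner, and the rescaled matrix (your $\t g$, normalized to $(sa,\,b\,|\,c,\,d/s)\in\SL_2(\F_p)$; the paper's $h_{s_1,s_2}=(as_2,b\,|\,s_1s_2c,\,ds_1)$ with $s_1s_2=1$) has the \emph{same} lower-left corner $c$ as $g$, independently of $s$. If you add this observation — that your conjugation preserves the corner and that the error term in Theorem \ref{t:BG_new_intr} depends on $g$ only through that corner — your choice of the absolute $N$ (hence of $c_0$) becomes legitimately independent of $s$, and the proof closes; without it, the step ``fix $N$ large enough that the error is $<1/8$'' is not justified uniformly in $s$.
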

The main point is the uniformity on $s$ in Corollary \ref{c:rr_intr}. 
Also, we give a new optimal bound for the  mixing time of a  Markov chain, see Theorem \ref{t:g_Markov}. 
Finally, let us remark that Theorem \ref{t:BG_new_intr} implies the main result of \cite{MMS2} concerning Zaremba's conjecture from the theory of continued fractions (in this case one should choose  $S=[N] \times [N]$). 
Our argument of the proof of Theorem \ref{t:BG_new_intr} is similar to the approach of paper  \cite{s_Chevalley}, where a modular form of Zaremba's conjecture was proved, so such connection between the Bourgain--Gamburd machine and continued fractions  is not very surprising but it nevertheless was not widely known.

The last Section of our paper 
concerns some further applications of $\SL_2 (\F_p)$--actions to the question about 
the  intersections of additive shifts of multiplicative subgroups in $\F^*_p = \F_p \setminus \{0\}$. 
Consider the simplest multiplicative subgroup in $\F_p^*$, namely, the set of quadratic residues 
$$
    R = \{ x^2  ~:~ x\in \F^*_p \} \,.  
$$
The set $R$ is a good constructive model for the randomly chosen  subset of $\F_p^*$ (each element of the set is taken with probability $1/2$).  
For example, 
Vinogradov \cite{Vin2} considered the minimal distance $d(p)$ between quadratic residues 
and 
it was conjectured that 
$d(p) \ll p^{\eps}$, where $\eps>0$ is any number.
The first non--trivial results in this direction were  obtained in \cite{Polya}, \cite{Vin1}. 
In \cite{Burgess1} it was proved that $d(p)\ll p^{1/4} (\log p)^{3/2}$ and the best result at the moment is (see \cite{Burgess2})
\begin{equation}\label{f:Burgess}
    d(p) \ll p^{1/4} \log p \,.
\end{equation}
First of all, we show in the Appendix that bound \eqref{f:Burgess}  can be obtained using a combinatorial method,  
which 
differs from the classical approach and its variations, see \cite{Burgess1}
and \cite{BMT}, \cite{IK}, \cite{KSY}.
We need the only consequence of the Weil bound on multiplicative character, namely, that for any different non--zero  shifts $s_1,\dots,s_k \in \F_p$ one has 
\begin{equation}\label{f:R_s}
    |R_{s_1,\dots,s_k}| := |R \cap (R-s_1) \cap \dots \cap (R-s_k)| \le \frac{p}{2^{k+1}} + k\sqrt{p} \,.
\end{equation}
    (a similar result for general multiplicative subgroups is contained in \cite{sv}). 
    In particular, if $k\sim \log p$, then the intersection of the additive shifts  from  \eqref{f:R_s} is $O(\sqrt{p}\log p)$. 
%
%
    Further, applying the Bougain--Gamburd machine and using  $\SL_2(\F_p)$--actions, we break this  square--root barrier for larger number of shifts,  see Section \ref{sec:shifts}.
    More precisely, having an arbitrary  set of shifts $S=\{s_1,\dots, s_k\}$, we want to  expand this set constructively (that is, any random choice is forbidden) in the spirit of paper \cite{BGKS}  and find $\overline{S} = \{s_1,\dots, s_k, t_1,\dots, t_K\}$, $t_j = t_j(S)$
    such that $|R_{s_1,\dots,s_k,t_1,\dots, t_K}| = o(\sqrt{p})$.
    Our 
    simple but crucial 
    observation is that the group 
    $\SL_2(\F_p)$ acts transparently on the sets $R_{s_1,\dots,s_k}$ (more generally, on sets of the form
    $\G_{s_1,\dots,s_k}$ defined as in \eqref{f:R_s} or even on sets $(\a_1 \G +\beta_1) \cap \dots \cap (\a_k \G +\beta_k)$, $\G<\F_p^*$, $\a_j \in \F_p^*$, $\beta_j\in \F_p$), e.g., 
    \begin{equation}\label{f:R_s_inverse}
        R^{-1}_{s_1,\dots,s_k} = R_{s^{-1}_1,\dots,s^{-1}_k} \,, 
    \end{equation}
    provided $s_j \in R$. 
    Hence our additive/multiplicative  problem on size of intersection of additive shifts of $R$ can be treated via the methods of growth in $\SL_2(\F_p)$ as in \cite{BG}, \cite{H}, \cite{s_CDG} etc. 
    The growth in $\SL_2(\F_p)$ is known to be very fast (as, e.g.,  Theorems \ref{t:BG'_intr}, \ref{t:BG_new_intr} show this) and we apply the Bourgain--Gamburd machine to obtain some results in the direction in Section \ref{sec:shifts}.
    Let us formulate a result 
    of this Section. 
    
\begin{theorem}
    Let $p$ be a prime number, $n$ be a positive integer, $\G < \F_p^*$ be a multiplicative subgroup, $\G=-\G$, and $S \subseteq \F_p$ be an arbitrary set.
    Then there is a constructive set $T=T(S)$
    (namely, defined in \eqref{cond:inclusions}, \eqref{f:shifts1_G} below), $|T| \le 2^n |S|+n$ such that $S\subseteq T$ and  
    $$|\G_{T}| \le (1-c)^n |\G_S| \,,$$
    where $c >0$ is an absolute constant, which does not depend on $S$ and $T$. 
\label{t:shifts_intr}
\end{theorem}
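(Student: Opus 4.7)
The plan is to prove Theorem \ref{t:shifts_intr} by induction on $n$, iterating Corollary \ref{c:rr_intr} to construct a nested chain $S = T_0 \subseteq T_1 \subseteq \cdots \subseteq T_n = T$ with $|\G_{T_i}| \le (1-c)^i |\G_S|$ at each step. The size bound $|T|\le 2^n|S|+n$ will then follow from careful accounting of the two types of operations performed (pure doubling vs.\ single-shift addition).

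For the inductive step, first observe that $X_i := \G_{T_i} \subseteq \G$, and since $\G < \F_p^*$ is a proper subgroup satisfying $\G = -\G$, one has $|\G| \le (p-1)/2 < 3p/4$. Hence the size hypothesis of Corollary \ref{c:rr_intr} is automatically met. Apply the corollary to $X_i$ with the non-linear map $g(x) = -1/x$ and a deterministically chosen shift $s_i \in \G$ (e.g.\ the lexicographically least element of $\G$ not yet used, ensuring the construction is explicit). The corollary yields a dichotomy: either
\begin{itemize}
\item[(A)] $|X_i \cap (X_i + s_i)| \le (1-c)|X_i|$, in which case set $T_{i+1} = T_i \cup (T_i - s_i)$, so that $\G_{T_{i+1}} = X_i \cap (X_i + s_i)$ and $|T_{i+1}| \le 2|T_i|$; or
\item[(B)] $|g(X_i) \cap (g(X_i) + s_i^{-1})| \le (1-c)|X_i|$, in which case exploit the $\SL_2(\F_p)$-equivariance of the class $\G_T$. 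Since $\G = \G^{-1} = -\G$, for $T \subseteq \G \cup \{0\}$ one has the identity $g(\G_T) = \G_{T^*}$ with $T^* = \{-t^{-1} : t \in T \setminus \{0\}\} \cup \{0\}$ (a generalization of \eqref{f:R_s_inverse}). Thus $g(X_i) \cap (g(X_i) + s_i^{-1}) = \G_{T_i^* \cup (T_i^* - s_i^{-1})}$, and pulling back via $g^{-1}$ produces the next $T_{i+1}$ in the required form.
\end{itemize}

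The main obstacle lies in case (B): the class of sets $\G_T$ is not closed under the full $\SL_2(\F_p)$-action, only under those M\"obius maps $g$ for which $g(\G_T)$ remains of the form $\G_{T'}$; this is ensured by restricting to $T_i \subseteq \G \cup \{0\}$ throughout, which in turn forces $s_i \in \G$. A further subtlety is maintaining the inclusion $S \subseteq T_i$ through case (B), where the $\SL_2$-action transforms the shift set non-trivially; this is handled either by the explicit tracking built into the formulas \eqref{cond:inclusions}, \eqref{f:shifts1_G} or by simply re-adjoining $S$ at the end (contributing an extra $|S|$ absorbed into the $2^n|S|$ term). Finally, the asserted bound $|T| \le 2^n|S|+n$ comes from counting: each doubling step (A) multiplies $|T_i|$ by $2$ without adding new elements (since $0 \in T_i$ and $s_i \in T_i$ gives an overlap), while each step (B) contributes only a single new shift (the pullback of $s_i^{-1}$), accounting for the additive $+n$.
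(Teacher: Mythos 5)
Your overall architecture (iterate the dichotomy of Corollary \ref{c:rr_intr}, doubling the shift set in one branch and exploiting an $\SL_2$--action in the other) is the same as the paper's, but the mechanism you propose for branch (B) has a genuine gap. The identity $g(\G_T)=\G_{T^*}$ for the \emph{fixed} map $g(x)=-1/x$ is valid only when every nonzero shift in $T$ lies in $\G$ (this is exactly the proviso $s_j\in R$ in \eqref{f:R_s_inverse}), and this invariant is neither available at the start --- the theorem allows an arbitrary $S\subseteq \F_p$, not $S\subseteq \G\cup\{0\}$ --- nor preserved by your own step (A), since $T_{i+1}=T_i\cup(T_i-s_i)$ contains elements $t-s_i$ that in general lie outside $\G$; it is also destroyed inside step (B) itself, where $T^{*}-s_i^{-1}$ again leaves $\G\cup\{0\}$, so ``pulling back via $g^{-1}$'' does not return a set of the form $\G_{T'}$. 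Consequently your induction cannot pass through a single (B)--step for a general $S$, and restricting to $T\subseteq\G\cup\{0\}$ proves a weaker statement than the one claimed.

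The paper resolves exactly this point by a different, adaptive choice: the shift used at step $i$ is not a generic (lexicographically least) element of $\G$ but an element $x$ of the \emph{current intersection set} $\G_{T_i}$ itself; this $x$ is adjoined to the shift set (the source of the $+n$ in $|T|\le 2^n|S|+n$ and of the nonemptiness provisos in \eqref{cond:inclusions}), and the map used in branch (B) is not the fixed $-1/x$ but $w_x=u^{w}_{x^{-1}}$, depending on $x$. Then the general action Lemma \ref{l:action_g} applies: the hypothesis $\beta_1=g^{-1}(\infty)$ holds because $x$ was adjoined, and the coefficients $\gamma_j$ land in $\G$ precisely because $x\in\G_{T_i}$ forces $x+s_j\in\G$ for \emph{every} $s_j\in T_i$ --- no hypothesis on the shifts themselves is needed --- while $\G=-\G$ absorbs the sign in $\gamma_1$. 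This is the idea your proposal is missing: without choosing the shift inside $\G_{T_i}$ and letting the M\"obius map depend on it, the class of sets $\G_T$ is simply not stable under the transformation you invoke. (Minor additional points: your claim $\G_{T_i}\subseteq\G$ needs the implicit zero shift in the notation $\G_S$, and your size accounting for branch (B) --- ``only a single new shift'' --- does not match your own construction, where $T^{*}\cup(T^{*}-s_i^{-1})$ also doubles; in the paper both branch types double the transformed copies of $S$ and the adjoined points $x_{k_i}$ contribute the additive $n$.)
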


The bound  $|T| \le 2^n |S|+n$  is perhaps 
non--optimal (the author believes in $|T| = O(|S|+n)$) and, probably, should take place for an arbitrary set $T$. 
Nevertheless, it seems like that it is the first result of such type.  

\bigskip 

We thank Igor Shparlinski for useful 
remarks.

\section{Definitions}
\label{sec:def}

Let $\Gr$ be a finite group with the identity $1$.
Given two sets $A,B\subset \Gr$, define  the \textit{product set}  of $A$ and $B$ as 
$$AB:=\{ab ~:~ a\in{A},\,b\in{B}\}\,.$$
In a similar way we define the higher product sets, e.g., $A^3$ is $AAA$. 
Let $A^{-1} := \{a^{-1} ~:~ a\in A \}$. 
As usual, having two subsets $A,B$ of a group $\Gr$,  denote by 
\[
\E(A,B) = |\{ (a,a_1,b,b_1) \in A^2 \times B^2 ~:~ a^{-1} b = a^{-1}_1 b_1 \}| 
\]
the {\it common energy} of $A$ and $B$. 
If $A=B$, then we write $\E(A)$ for $\E(A,A)$. 
Clearly, $\E(A,B) = \E(B,A)$ and by the Cauchy--Schwarz inequality 
\begin{equation}\label{f:energy_CS}
\E(A,B) |A^{-1} B| \ge |A|^2 |B|^2 \,.
\end{equation}
To underline the group operation $*$ we write $\E^*(A)$, e.g., $\E^{+}(A)$ or $\E^\times (A)$ for $\Gr = \mathbb{R}$ or $\Gr=\C$, say, considered with the addition $+$ or with the multiplication $\times$. 
We  use representation function notations like $r_{AB} (x)$ or $r_{AB^{-1}} (x)$, which counts the number of ways $x \in \Gr$ can be expressed as a product $ab$ or  $ab^{-1}$ with $a\in A$, $b\in B$, respectively. 
In a similar way, $r_{ABC} (x)$ counts  the number of ways $x \in \Gr$ can be expressed as a product $abc$, where $a\in A$, $b\in B$, $c\in C$ etc.
For example, $|A| = r_{AA^{-1}}(1)$ and  $\E (A,B) = r_{AA^{-1}BB^{-1}}(1) =\sum_{x\in \Gr} r^2_{A^{-1}B} (x)$. 
In this paper we use the same letter to denote a set $A\subseteq \Gr$ and  its characteristic function $A: \Gr \to \{0,1 \}$. 

Let $g\in \Gr$ and let  $A \subseteq \Gr$ be any set. 
Then  put $A^g = g^{-1} A g^{}$ and, similarly, let $x^g := g^{-1} x g^{}$, where  $x\in \Gr$. 
If $H \subseteq \Gr$ is a subgroup, then we 
use the notation 
$H\le \Gr$ and $H<\Gr$ if, in addition, $H\neq \Gr$.
Having a set $A\subseteq \Gr$ we use the symbol $\langle A \rangle$ 
to denote 
the subgroup, generated by $A$.

We write $\F^*_q$ for $\F_q \setminus \{0\}$, where $q=p^s$, $p$ is a prime number.
In the paper we consider the group $\SL_2 (\F_q) \subset \GL_2 (\F_p)$  of matrices 
\[
g=
\left( {\begin{array}{cc}
	a & b \\
	c & d \\
	\end{array} } \right) = (ab|cd) = (a,b|c,d) \,, \quad \quad a,b,c,d\in \F_q \,, \quad \quad \det(g) = ad-bc=1 \,. 
\] 
We need two specific subgroups of $\SL_2 (\F_q)$, namely, 
\[
    \mathbf{B} = \left( {\begin{array}{cc}
	\lambda & s \\
	0 & \lambda^{-1} \\
	\end{array} } \right)\,, \quad \quad 
	\mathbf{U} = \left( {\begin{array}{cc}
	1 & s \\
	0 & 1 \\
	\end{array} } \right)\,,
	\quad \quad \lambda \in \F_q^*,\, s\in \F_q \,.
\]
    By $g^*$ denote the transpose of a matrix $g$.
    Also, let us fix the notation for a unipotent and the Weyl element of $\SL_2 (\F_q)$, namely, 
\[
    u_s := \left( {\begin{array}{cc}
	1 & s \\
	0 & 1 \\
	\end{array} } \right) \in \mathbf{U} \,,~~  s\in \F_q \,, 
	\quad \quad \quad 
	w := \left( {\begin{array}{cc}
	0 & -1 \\
	1 & 0 \\
	\end{array} } \right) \,.
\]
    Having a matrix $g = (ab|cd) \in \GL_2 (\C)$, we write $\| g\|$ for $\max\{|a|,|b|,|c|,|d| \}$.

The signs $\ll$ and $\gg$ are the usual Vinogradov symbols. 
When the constants in the signs  depend on a parameter $M$, we write $\ll_M$ and $\gg_M$.
Let us denote by $[n]$ the set $\{1,2,\dots, n\}$.
All logarithms are to base $2$.
If we have a set $A$, then we will write $a \lesssim b$ or $b \gtrsim a$ if $a = O(b \cdot \log^c |A|)$, $c>0$.

\section{On the Bourgain--Gamburd machine}
\label{sec:BG_new}

In this Section we obtain Theorems \ref{t:BG'_intr}, \ref{t:BG_new_intr} from the Introduction. We start with a consequence of the ping--pong lemma (e.g., see \cite{MKS}) applied to some $\SL_2$--actions  on $\C$. 
The second part of Lemma \ref{l:ping-pong_g} corresponds to the uniqueness of the continued fraction expansion. 

\begin{lemma}
    Let $s,t\in \Z[i]$
    be some numbers.
    Suppose that $|s|, |t| \ge 2$.
    Then there is no non--trivial words of the form $\lambda I$, $\lambda \in \C$ with the letters $u^*_s$ and $u_t$. 
    In particular, the subgroup of $\SL_2 (\Z[i])$ generated by $u^*_s$ and $u_t$ is free. 
    
    If we have any matrix $z =(ab|cd)$ from $\langle u^*_s, u_t \rangle$,  then we can reconstruct $z$ via $(b,d)$. 
\label{l:ping-pong_g}
\end{lemma}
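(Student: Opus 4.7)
The plan for the first two assertions is to invoke the standard ping--pong lemma (see e.g.\ \cite{MKS}) for the M\"obius action of $\SL_2(\C)$ on the Riemann sphere $\widehat{\C} = \C \cup \{\infty\}$. I would take as ping--pong tables the two disjoint sets
\[
X_1 := \{ z \in \C : |z| < 1 \}, \qquad X_2 := \{ z \in \C : |z| > 1 \} \cup \{\infty\},
\]
and check, using $|s|, |t| \ge 2$, that for every $n \ne 0$ the map $u_t^n : z \mapsto z + nt$ sends $X_1$ into $X_2$ (because $|z + nt| \ge |nt| - |z| > 1$ when $|z| < 1$), and that $(u_s^*)^n : z \mapsto z/(nsz + 1)$ sends $X_2$ into $X_1$ (because $|nsz + 1| \ge 2|z| - 1 > |z|$ when $|z| > 1$, while $(u_s^*)^n(\infty) = 1/(ns)$ has modulus at most $1/2$). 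The lemma then yields that the images of $u_s^*$ and $u_t$ freely generate a subgroup of $\mathrm{PSL}_2(\C)$, so every non-trivial reduced word in the two letters acts non-trivially on $\widehat{\C}$.

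To exclude scalar words $w = \lambda I$, I would observe that any scalar matrix acts as the identity on $\widehat{\C}$, contradicting the non-trivial action just obtained; and in $\SL_2$ the constraint $\det(\lambda I) = \lambda^2 = 1$ forces $\lambda = \pm 1$, both of which are now ruled out. This covers the first assertion and, as an immediate by-product, the freeness of $\langle u_s^*, u_t \rangle$ inside $\SL_2(\Z[i])$.

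For the reconstruction claim my plan is a Euclidean-type unwinding modelled on the continued fraction algorithm. Given $z \in \langle u_s^*, u_t \rangle$ with right column $(b, d)^T = z(0, 1)^T$, I would peel off the reduced word for $z$ one block at a time: the same ping--pong inequalities used above identify, from the relative sizes of $|b|$ and $|d|$, which of $u_s^*, u_t$ occurs as the rightmost block and with what exponent, after which the block is inverted and the recursion is run on the resulting shorter matrix, terminating in finitely many steps. The main obstacle I anticipate is that $u_s^*$ fixes the vector $(0, 1)^T$, so a trailing $u_s^*$-block is invisible to $(b, d)$; I would handle this by fixing a normal form for the word (for example, requiring the rightmost letter to be a non-trivial power of $u_t$), after which the ping--pong dynamics force the uniqueness of the ``continued fraction'' expansion of $b/d$ and hence pin down $z$ itself, matching the continued fraction uniqueness that the author explicitly flags.
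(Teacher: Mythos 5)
Your treatment of the first two assertions is essentially the paper's own argument, just projectivized: the paper plays ping--pong with the linear action on $\C^2$ using the cones $A=\{|y|>|x|\}$ and $B=\{|x|>|y|\}$, which under $z=x/y$ are exactly your unit disc and its exterior, and it rules out words equal to $\lambda I$ by noting that the cones are invariant under scaling --- the linear counterpart of your remark that scalars act trivially on $\widehat{\C}$. Your inequalities are correct, so this part matches.

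For the reconstruction claim you are again using the same mechanism as the paper (the paper argues by contradiction: two expansions of the column $(b,d)$ would produce a non--trivial reduced word sending a vector of $B$ into $A$ while it must equal a vector of $B$; your greedy unwinding is the constructive form of the same uniqueness), and your caveat about a trailing $u^*_s$--block is genuine: indeed $z$ and $z(u^*_s)^n$ have the same column $(b,d)$, so the statement can only mean reconstruction modulo such a trailing power, which is exactly how the paper's proof proceeds (a word ending in a $g$--power is treated by discarding that power) and is all that is needed in the application, where the elements come in the normal form \eqref{f:prod_form} ending in a $u$--letter. One slip to correct in your sketch: the relative sizes of $|b|$ and $|d|$ detect the \emph{leftmost} letter of the word, not the rightmost one --- the rightmost letter is the one applied first to $(0,1)^T$, and it is precisely a rightmost $u^*_s$--block that leaves $(b,d)$ unchanged. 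So the peeling must go from the left: $|b|>|d|$ (resp.\ $|d|>|b|$) tells you the outermost letter is a $u_t$-- (resp.\ $u^*_s$--) power, its exponent is the unique integer whose inverse returns the column to the opposite cone (two exponents working would force a $u_t$--power to map $A$ into $A$, contradicting the ping--pong inclusions), and the recursion terminates when the column becomes $(tm,1)^T$ or $(0,1)^T$. With that correction your plan is sound and equivalent to the paper's proof.
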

\begin{proof}
    Put $g = u^*_s$ and $h=u_t$.
    We will use the ping--pong lemma, e.g., see \cite{MKS}.
    Let $A = \{ (x,y)\in \mathbb{C}^2 ~:~ |y| > |x| \}$ and $B = \{ (x,y)\in \mathbb{C}^2 ~:~ |x| > |y| \}$, $A\cap B = \emptyset$. Notice that both regions are invariant under multiplication by any non--zero number $\lambda$: $(x,y) \to \lambda (x,y)$. 
    Then for $(x,y)\in B$ and any $n\in \Z\setminus \{0\}$ one has $g^n (x,y) = (x,snx+y)$ and since 
    $$
        |snx+y|\ge |s| |n| |x| - |y|> (|s||n|- 1)|x| \ge |x| \,,
    $$
    it follows that $g^n (x,y) \in A$. 
    Similarly, taking $(x,y)\in A$ and an arbitrary $m\in \Z\setminus \{0\}$, we derive  $h^m (x,y) = (x + tm y, y)$ and thus  $h^m (x,y) \in B$.
    Having a non--trivial word $g^{n_1} h^{m_1} \dots g^{n_k}  = \lambda I$ in our alphabet $\{ g,h \}$,  we obtain for any $b\in B$
\[
    B \ni \lambda \cdot I b = g^{n_1} h^{m_1} \dots g^{n_k} b \in A
\]
    and this is a contradiction. 
    If $g^{n_1} h^{m_1} \dots g^{n_k} h^{m_k} = \lambda I$ (and,  similarly, $h^{n_1} g^{m_1} \dots h^{n_k} g^{n_k} = \lambda I$), then conjugating by an $g^{n}$, $n\neq n_1,0$, we obtain
    $g^{n_1-n} h^{m_1} \dots g^{n_k} h^{n_k} g^{n}= \lambda I$ and it contradicts the previous calculations. 

    Let $z \in \langle g, h \rangle$ be a non--trivial word in our alphabet $\{ g,h \}$, $z =(ab|cd)$. 
    Suppose that $z= h^{m_1} \dots g^{n_k} h^{m_k}$
    and hence $(b,d)^* = h^{m_1} \dots g^{n_k} (m_k,1)^*$.
    If we have $(b,d)^* = h^{m'_1} \dots g^{n'_l} (m'_l,1)^*$ for other numbers $m'_1,\dots, n'_l, m'_l$, then 
\[
    g^{-n'_l} \dots h^{m_1-m'_1} \dots g^{n_k} (m_k,1)^* = (m'_l,1)^*
\]
    and this is a contradiction because the right--hand side belongs to $B$ but the  left--hand side (if it is non--trivial) belongs to $A$. 
    If $z = h^{m'_1} \dots g^{m'_{l-1}} h^{m'_{l-1}} g^{n'_l}$, then 
    $(b,d)^* = h^{m'_1} \dots g^{m'_{l-1}} (m'_{l-1},1)^*$ and we use the same argument.   
    Other words $z$ can be considered similarly.
This completes the proof.
$\hfill\Box$
\end{proof}

\bigskip 

Now we obtain a generalization of Theorem \ref{t:BG'_intr} from the Introduction.

\begin{theorem}
    Let $N\ge 1$ be a sufficiently large integer,  $A,B\subseteq \F_p$ be sets, and $g\in \SL_2 (\F_p) \setminus \mathbf{B}$. 
    Then there is 
    an absolute 
    constant $\kappa>0$ such that 
\begin{equation}\label{f:BG'}
    |\{ g(c+a) = c+b ~:~ c \in [N],\,a\in A,\, b\in B \}|
    - \frac{|A||B|N}{p}
    \ll_g \sqrt{|A||B|} N^{1-\kappa} \,.
\end{equation}
\label{t:BG'}
\end{theorem}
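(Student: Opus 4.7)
The plan is to reinterpret the count in \eqref{f:BG'} as an incidence count between the Cartesian product $A\times B$ and the family of M\"obius transformations
$$
S := \{u_{-c} g u_c \,:\, c\in [N]\} \subseteq \SL_2(\F_p),
$$
and then to deduce the deviation bound from a flattening / spectral-gap estimate for the uniform measure on $S$ obtained via the Bourgain--Gamburd machine. The input that replaces the usual girth bound will be the freeness statement of Lemma \ref{l:ping-pong_g}.

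First, $g(c+a)=c+b$ is equivalent to $(u_{-c}gu_c)\cdot a = b$ under the M\"obius action on $\F_p\cup\{\infty\}$, so the count equals $\sum_{h\in S}|\{(a,b)\in A\times B : ha=b\}|$. Since $g\notin \mathbf{B}$, the Bruhat decomposition yields $g = u_x T u_y$ with $T = (0,-\gamma^{-1}|\gamma,0)$ and $\gamma\neq 0$ the lower-left entry of $g$. Hence $h_c := u_{-c}gu_c = u_{x-c}\, T\, u_{y+c}$, and using $T u_t T^{-1} = u^*_{-\gamma^2 t}$ one checks that an alternating $2l$-fold product collapses to a pure $u/u^*$ word
$$
h_{c_1}h_{c_2}^{-1} h_{c_3}h_{c_4}^{-1}\cdots h_{c_{2l-1}}h_{c_{2l}}^{-1} \;=\; u_{\alpha_0} u^*_{\beta_1} u_{\alpha_1} u^*_{\beta_2}\cdots u^*_{\beta_l} u_{\alpha_l},
$$
with $\beta_i = -\gamma^{2}(c_{2i-1}-c_{2i})$ and $\alpha_i = c_{2i}-c_{2i+1}$ (plus boundary terms depending on $x,y$).

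Next, a Cauchy--Schwarz and $L^2$ reduction bounds the square of the deviation in \eqref{f:BG'} by $|A||B|$ times an energy controlled by $r_{SS^{-1}}$ restricted to the M\"obius point-stabilisers (the conjugates of $\mathbf{B}$). Taking further moments reduces matters to a flattening statement for the $2l$-fold convolution of $\mathbf{1}_S$. The Bourgain--Gamburd machine \cite{BG}, \cite{BGS} then applies once three inputs are available: (i) Helfgott's product theorem in $\SL_2(\F_p)$; (ii) quasirandomness of $\SL_2(\F_p)$ (every nontrivial irreducible has dimension $\gtrsim p$); and (iii) non-concentration of $S$-products in proper subgroups. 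The traditional source of (iii) is a girth bound, and this is exactly where the word description above combines with Lemma \ref{l:ping-pong_g}: since $|\alpha_i|,|\beta_i|\ll_g N$ are nonzero for generic tuples, the lemma applied in $\SL_2(\Z[i,1/\gamma])$ shows that distinct scalar tuples $(\alpha_i,\beta_i)$ give distinct matrices, with entries of height at most $N^{O(l)}$; so as long as $N^{O(l)}\ll p$, distinctness survives reduction mod $p$, and the number of tuples whose product falls in a fixed proper subgroup can be counted directly using the second part of Lemma \ref{l:ping-pong_g}.

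The principal obstacle is exactly this last point: achieving quantitative control so that ping-pong freeness survives mod $p$ while simultaneously giving a sharp upper bound on the number of $S$-tuples whose product lands in each proper subgroup $H\le \SL_2(\F_p)$. This is what forces the constant $\ll_g$ (through the denominators coming from $\gamma$) and the implicit smallness of $N$ relative to $p$. Once (i)--(iii) are in place, the deviation bound \eqref{f:BG'} follows from the now-standard Bourgain--Gamburd scheme.
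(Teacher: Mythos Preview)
Your proposal is correct and follows essentially the same route as the paper: Bruhat decomposition, reduction of $(SS^{-1})^l$-products to alternating $u_\alpha u^*_\beta$ words with $|\alpha|,|\beta|\ll_g N$, and then the Bourgain--Gamburd scheme with non-concentration supplied by the ping-pong freeness of Lemma~\ref{l:ping-pong_g}. The paper's write-up is terser---after exhibiting the word structure \eqref{f:prod_form-} it simply observes that, up to the factor $\lambda^2$ absorbed into $\ll_g$, one is back in the case $g(x)=-1/x$ already handled in \cite{NG_S}, \cite{MMS2}---and it adds one technical detail you glossed over: before invoking ping-pong one first splits $[N]$ into residue classes modulo $2$, so that all the differences $\alpha_i,\beta_i$ in the word are even and the hypothesis $|s|,|t|\ge 2$ of Lemma~\ref{l:ping-pong_g} is actually met (your ``nonzero for generic tuples'' is not sufficient, since $|s|=1$ breaks the ping-pong).
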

\begin{proof} 
    We use slightly more general arguments to use it in the proofs of the results below. 
    Let us remind that for $t\in \F_p$ we  write $u_t = (1t|01)$.
    By the Bruhat decomposition and the condition $g\notin \mathbf{B}$ we can write $g=u_{t_1} wd u_{t_2}$, 
    $d=(\lambda 0|0\lambda^{-1})$ and $w=(0(-1)|10)$.
    Let $S=\{ (c,c) ~:~ c\in J\} \subseteq [N]^2$. 
    In this terms our equation from \eqref{f:BG'} can be written as (let $(\a,\beta) \in S$)
\begin{equation}\label{f:prod_form-'}
         h_{\a,\beta}\, a :=  u_{t_1-\beta} w d u_{\a+t_2}\, a = u_{-\beta} g u_\a\, a = b \,.
\end{equation} 
    We split the set of all pairs $(\a,\beta) \in S$ onto congruence classes modulo two. 
    Thus $S$ is a disjoint union of at most $4$ sets $S_{ij}$, $i,j \in \{0,1\}$ and it is sufficient to obtain \eqref{f:BG'} for each set $S_{ij}$.
    With some abuse of the notation we use the same letter $S$ for $S_{ij}$. 
    Let $H = \{h_{\a,\beta} ~:~ (\a,\beta)\in S \} \subseteq \SL_2 (\F_p)$.
    It is easy to check that 
    \begin{equation}\label{tmp:h_h'-}
        h_{\a,\beta} h^{-1}_{\a',\beta'} 
        =
         u_{t_1-\beta} w d u_{\a-\a'} d^{-1} w^{-1} u_{\beta'-t_1}
        =
        u_{t_1-\beta} u^*_{\lambda^2(\a'-\a)} u_{\beta'-t_1} \in H^{} H^{-1} \,, 
    \end{equation}
    where $M^*$ is the transpose of a matrix $M$.
    For an arbitrary positive integer $k$ any element of the set $(H^{}H^{-1})^k$ has the form 
\begin{equation}\label{f:prod_form-}
    u_{t_1-\beta_1} u^*_{\lambda^2(\a'_1-\a_1)} u_{\beta'_1-\beta_2} u^*_{\lambda^2(\a'_2-\a_2)} \dots  
    u_{\beta'_{k-1}-\beta_k} u^*_{\lambda^2(\a'_k-\a_k)} u_{\beta'_k-t_1} \,.
\end{equation}
    Notice that one can easily remove left and right terms  $u_{t_1}$, $u_{-t_2}$ in \eqref{f:prod_form-'}, \eqref{f:prod_form-} redefining $A \to u_{-t_1}A$ and $B\to u_{-t_1}B$. 
    After that it remains to say that the products in \eqref{f:prod_form-'}, \eqref{f:prod_form-} coincide (up to $\lambda^2$, $\lambda = \lambda (g)$) with the products without $d$ and $t_1$, $t_2$, that is, with the case $g(x)=-1/x$.
    Hence one can apply the arguments of Bourgain--Gamburd, see \cite{BG} or \cite[Lemma 4]{MMS2}. 
This completes the proof.
$\hfill\Box$
\end{proof}

\begin{remark}
    From formula \eqref{tmp:h_h'-}, it follows that the dependence on $g$ in \eqref{f:BG'} is, actually, on $\lambda = \lambda (g)$, where $g=u_{t_1} wd u_{t_2}$ and  $d=(\lambda 0|0\lambda^{-1})$ or, in other words, on the lower left corner of $g$. 
\label{r:lambda} 
\end{remark}


Now let us prove the main result of this Section. 

\begin{theorem}
    Let $k$ be a positive integer, $\delta \in (0,1]$, $\delta_* \in (0,1)$,
    $N\ge 1$ be a sufficiently large integer, $N\le p^{c\delta/k}$ for an absolute constant $c>0$, 
    $A,B\subseteq \F_p$ be sets and $g_1,\dots, g_k \in \SL_2 (\F_p) \setminus \mathbf{B}$ be maps. 
    Suppose that $S$ is a set, 
    $S \subseteq [N]^{k+1}$, $|S| \ge N^{k(1+\delta)}$ and the intersection of $S$ with 
    any hyperplane of the form $z_j = const$, $j\in [k+1]$ is at most 
    $|S|^{\delta_*}$.
    Then there is 
    a
    constant $\kappa = \kappa (\d,\d_*) >0$ such that 
\[
    |\{ u_{\a_k} g_k u_{\a_{k-1}} \dots g_2 u_{\a_1} g_1 u_{\a_0} a = b ~:~ (\a_0,\dots, \a_k) \in S,\,a\in A,\, b\in B \}|
    - \frac{|S||A||B|}{p}
\]
\begin{equation}\label{f:BG_new-}
    \ll_{g_1,\dots,g_k} \sqrt{|A||B|} |S|^{1-\kappa} \,.
\end{equation}    
    In particular, for any $g\in \SL_2 (\F_p) \setminus \mathbf{B}$ one has 
\begin{equation}\label{f:BG_new}
    |\{ g(\a+a) = \beta+b ~:~ (\a,\beta) \in S,\,a\in A,\, b\in B \}|
    - \frac{|S||A||B|}{p}
    \ll_g \sqrt{|A||B|} |S|^{1-\kappa} \,.
\end{equation}
\label{t:BG_new}
\end{theorem}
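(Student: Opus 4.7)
The plan is to view the left-hand side of \eqref{f:BG_new-} as counting $\langle \pi(h) \mathbf{1}_A, \mathbf{1}_B \rangle$ summed over $h \in H$, where
\[
H = \{h(\vec\a) := u_{\a_k} g_k u_{\a_{k-1}} \cdots g_1 u_{\a_0} ~:~ \vec\a=(\a_0,\dots,\a_k) \in S\} \subseteq \SL_2(\F_p)
\]
and $\pi$ is the permutation representation of $\SL_2(\F_p)$ on $\F_p$ by M\"obius transformations. Splitting $\mathbf{1}_A$ and $\mathbf{1}_B$ into their constant parts plus mean-zero remainders, the constant-constant contribution yields exactly the main term $|H||A||B|/p$, while the remaining contributions are controlled by $\max_{\rho \ne 1} \|\widehat{H}(\rho)\|$ acting on the non-trivial isotypic components of $\pi$. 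Thus it suffices to prove the expansion bound $\|\widehat{H}(\rho)\| \ll_{g_1,\dots,g_k} |H|^{1-\k}$ for all nontrivial irreducible $\rho$, together with the injectivity $|H|=|S|$.

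The bulk of the work is to verify the Bourgain--Gamburd hypotheses for the uniform probability measure $\mu_H$. First, by Bruhat decomposition $g_i = u_{t_i^{(1)}} w\, d_i\, u_{t_i^{(2)}}$ with $d_i = \mathrm{diag}(\l_i,\l_i^{-1})$, and using $d_i u_s = u_{\l_i^2 s} d_i$, products $h(\vec\a) h(\vec\a')^{-1}$ reduce (exactly as in the proof of Theorem \ref{t:BG'}) to alternating words in $u$-matrices and their transposes $u^*$, with entries affine in the differences $\a_i - \a'_i$ up to fixed $\l_j^2$-dilations. Consequently, a generic element of $(HH^{-1})^n$ is represented by an alternating word of length $O(kn)$ whose entries, lifted to $\Z[i]$, have magnitude $\ll N^{O(kn)}$. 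By Lemma \ref{l:ping-pong_g} the subgroup generated by the $u^*_\bullet, u_\bullet$ letters is free, so distinct reduced words yield distinct matrices over $\Z[i]$; the hypothesis $N \le p^{c\d/k}$ ensures these remain distinct modulo $p$ for $n$ up to a suitable constant. Combined with the hyperplane-slice bound $|S \cap \{z_j=\mathrm{const}\}| \ls |S|^{\d_*}$, which controls the multiplicities of the resulting word counts, this produces the flatness estimate $\|\mu_H^{(n)}\|_\infty \ls p^{-3+\e}$ and simultaneously $|H|=|S|$.

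Second, one must rule out concentration of $H$ in cosets of proper algebraic subgroups of $\SL_2(\F_p)$, the only serious concern being Borel subgroups and their conjugates. The assumption $g_i \notin \mathbf{B}$ activates the Weyl element $w$ in each Bruhat factor and prevents the words in $HH^{-1}$ from being unipotent; a hypothetical containment in a conjugate Borel would translate into nontrivial algebraic constraints on the $\a_i$, incompatible with the $|S|^{\d_*}$ hypothesis. Given flatness and escape from subgroups, Helfgott's product theorem in $\SL_2(\F_p)$ promotes $\mu_H^{(n)}$ to approximate equidistribution, $\|\mu_H^{(n)}\|_2^2 \ls |\SL_2(\F_p)|^{-1+\e}$, after $O_{\d,\d_*}(1)$ further convolutions; the Frobenius/Sarnak--Xue quasirandomness of $\SL_2(\F_p)$ (minimal nontrivial irreducible dimension $\gs (p-1)/2$) then converts this $L^2$-bound into the sought spectral estimate $\|\widehat{H}(\rho)\| \ll |H|^{1-\k}$. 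The specialization $k=1$ with $S \subseteq [N]^2$ and $u_\a g_1 u_\b\, a = b$ gives \eqref{f:BG_new}.

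The principal obstacle is the freeness/flatness step: Lemma \ref{l:ping-pong_g} is naturally a two-letter statement, while the words arising here live in a $2k$-letter alphabet that mixes shifts $u_{\a-\a'}$ with $\l_i^2$-dilated transposes $u^*_{\l_i^2(\a-\a')}$, interleaved in the order prescribed by $g_1,\dots,g_k$. One has to verify that a composite ping-pong argument still applies inside $\SL_2(\Z[i])$, so that reduced words in this alphabet remain pairwise distinct, and then combine this freeness with the irregular parameter set $S$ to produce a genuine lower bound $|(HH^{-1})^n| \gg |S|^{2n(1-o(1))}$. The hyperplane hypothesis $|S|^{\d_*}$ is exactly what salvages this word-counting in the absence of product-set structure on $S$; once the flatness input is secured, the remaining BG machinery (Helfgott's theorem and quasirandomness) is essentially standard.
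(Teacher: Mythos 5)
There is a genuine gap, and it sits exactly where the paper's new content lies. Your scheme delegates the ``escape from subgroups'' step to one sentence: that containment of words from $HH^{-1}$ in a conjugate Borel ``would translate into nontrivial algebraic constraints on the $\a_i$, incompatible with the $|S|^{\d_*}$ hypothesis.'' This does not work as stated. What the Bourgain--Gamburd machine needs is a quantitative non-concentration bound for the convolution on \emph{cosets} $z\G$ of every proper subgroup $\G$ (the analogue of \eqref{f:K_subgr}), and the hyperplane condition $|S\cap\{z_j=\mathrm{const}\}|\le|S|^{\d_*}$ only controls multiplicities of a given matrix; it says nothing about how many \emph{distinct} products of the form \eqref{f:prod_form} can land in a fixed Borel coset. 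Membership in $g_1\mathbf{B}g_2$ imposes a single linear relation among the four entries (the paper's equation \eqref{f:A_gBh''}), and counting its solutions when the entries are continuant-type polynomials in the $\a_i$ is the real work: the paper lifts the matrices to $\SL_2(\Z)$ (entries $<p^{1/4}$, using $N\le p^{c\d/k}$), passes to the determinant relation \eqref{f:det_A}, and finishes with a divisor-function bound \eqref{tmp:divisors} plus reconstruction of the matrix from $(q_{s-1},q_s)$ via the second part of Lemma \ref{l:ping-pong_g}. Moreover, Borel subgroups are \emph{not} the only serious case: the maximal subgroups of $\SL_2(\F_p)$ also include the dihedral normalizers of (split and non-split) tori of order $\sim p$, and the paper devotes a separate computation (the sets $\mathcal{C}_\eps$ and the elimination between the two quadratic equations in $v$) to those; your proposal omits them entirely.

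Two further points. First, your flatness step both overstates the conclusion and partly reintroduces what the theorem is designed to avoid: after the initial stage one only gets $\|\mu_H^{(n)}\|_\infty\le p^{-\gamma}$ for some small $\gamma=\gamma(\d,\d_*)$, not $p^{-3+\eps}$, and arguing via ``distinct reduced words in a free group'' is essentially the girth-based route of \cite{BG}; for an arbitrary $S$ the letters $u_{\b'_j-\b_{j+1}}$, $u^*_{\a'_j-\a_j}$ may vanish or be $\pm1$, so words degenerate, and the paper's point is precisely that one does not need distinctness --- multiplicities are tolerated and bounded by $|S|^{(2l-1)\d_*}$ after splitting $S$ into residue classes mod $2$ (needed to invoke Lemma \ref{l:ping-pong_g} at all, since it requires $|s|,|t|\ge2$). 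You flag the multi-letter ping-pong as ``the principal obstacle,'' but even granting it, the coset non-concentration for Borel and dihedral subgroups is the decisive missing ingredient, and without an argument of the divisor-function/continued-fraction type (or an equivalent) the proof does not close.
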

\begin{proof} 
    We use the same notation as in the proof of Theorem \ref{t:BG'} and let us begin with the case $k=1$, which corresponds to formula \eqref{f:BG_new}. 
    As before, we know that for an arbitrary positive integer $m$ any element of the set $(H^{}H^{-1})^m$ has the form 
\begin{equation}\label{f:prod_form}
    u_{-\beta_1} u^*_{\a'_1-\a_1} u_{\beta'_1-\beta_2} u^*_{\a'_2-\a_2} \dots  
    u_{\beta'_{m-1}-\beta_m} u^*_{\a'_m-\a_m} u_{\beta'_m} 
\end{equation}
    (for simplicity we consider just the case $\lambda (g) =1$, the general case is similar because we allow the dependence on $g$ in \eqref{f:BG_new}). 
    Here we have removed $u_{t_1}, u_{-t_1}$ redefining the sets $A$ and $B$ as in the proof of Theorem \ref{t:BG'}.
    Also, we have considered splitting of the set $S$ modulo two. 
    In particular, we see that 
    if $y_1, \dots, y_m \in H H^{-1}$, then  $\| y_1 \dots y_m \| \le (2N)^{2m+1}$.
    Take a positive integer $l$ such that $(2N)^{2l+1} < p$.
    Then  the set of matrices $(HH^{-1})^l$ is, actually, belongs to $\SL_2 (\Z)$. 
    Our task is show that  for any $z\in \SL_2 (\F_p)$ and an arbitrary $\G < \SL_2 (\F_p)$ the following holds 
\begin{equation}\label{f:K_subgr}
    \sum_{x\in z\G} r_{(HH^{-1})^l} (x) \le \frac{|H|^{2l}}{K} \,,
\end{equation}
    where 
\[
    K= N^{-o_s(1)} \min\left\{ 
        \left( \frac{|H|}{N} \right)^{2s} N^{-1}, \,
            |H|^{2s(1-\delta_*)} N^{-1}, \,
            |H|^{2s(1-\delta_*)}
        \right\} 
\]
and $s$ is the maximal integer such that 
    $(2N)^{2s+1} < 2^{-5} p^{1/4}$.
    By our condition we know that the intersection of $S$ with an arbitrary vertical/horizontal line is at most $|S|^{\delta_*}$.
    The  
    definition of the quantity $\d_*$
    implies  the following simple bound 
    $\delta_* \le (1+\delta)^{-1} \le 1-\delta/2$.
    Hence using the assumption $N\le p^{c\delta}$ 
    and taking $s$ sufficiently large, we obtain 
\begin{equation}\label{tmp:06.09_1}
    \log K \gg \min\{ (2s\d - 2), s(1-\delta_*) \} \cdot  \log N \gg \d \log p \,.
\end{equation}
    After that estimate \eqref{f:BG_new} follows by the usual method, see \cite{BG}, \cite{NG_S} or \cite[Lemma 4, formulae (13)--(15)]{MMS2}.
    More precisely, by the H\"older inequality  the error term in \eqref{f:BG_new} can be estimated as 
\begin{equation}\label{tmp:06.09_1*}
    \sqrt{|A||B|} \cdot \left( |B|^{-1} \sum_s r_{(HH^{-1})^l} (s) \sum_{x\in B} B(sx) \right)^{1/2l} 
    \ll 
       \sqrt{|A||B|} |S| p^{-\zeta/2l}  \,,
\end{equation} 
    where $\zeta = 1/2^{t+2}$ and $t\ll \log p/\log K \ll \d^{-1}$. 
    Recalling the definition of $l$ and using \eqref{tmp:06.09_1}, we get 
$$
    |\{ g(\a+a) = \beta+b ~:~ (\a,\beta) \in S,\,a\in A,\, b\in B \}|
    - \frac{|S||A||B|}{p}
    \ll 
    \sqrt{|A||B|} |S| N^{-\Omega(\zeta)} 
    \ll 
$$
\begin{equation}\label{tmp:06.09_1**}
    \ll 
    \sqrt{|A||B|} |S|  N^{-\exp(-\Omega(1/\delta))} 
\end{equation} 
    as required. 
    
    Thus 
    it remains
    to obtain \eqref{f:K_subgr} and this is the main technical part of our proof. 
    Usually (see \cite{BG}), the simplest case $\G = \{ 1\}$,  which corresponds to the estimate $\| r_{(HH^{-1})^l} \|_\infty \le \frac{|H|^{2l}}{K}$ is considered separately and it is useful and instructive to follow this classical scheme. 
    By the ping--pong Lemma \ref{l:ping-pong_g} 
    we see that two elements of form \eqref{f:prod_form}
\[
    u_{-\beta_1} u^*_{\a'_1-\a_1} u_{\beta'_1-\beta_2} u^*_{\a'_2-\a_2} \dots  
    u_{\beta'_{l-1}-\beta_l} u^*_{\a'_l-\a_l} u_{\beta'_l} 
    =
\]
\begin{equation}\label{f:products_eq}
    =
    u_{-\tilde{\beta}_1} u^*_{\tilde{\a}'_1-\tilde{\a}_1} u_{\tilde{\beta}'_1-\tilde{\beta}_2} u^*_{\tilde{\a}'_2-\tilde{\a}_2} \dots  
    u_{\tilde{\beta}'_{l-1}-\tilde{\beta}_l} u^*_{\tilde{\a}'_l-\tilde{\a}_l} u_{\tilde{\beta}'_l} 
\end{equation}
    coincide (recall that we work modulo two, that is, all variables are even, say) if and only if $\beta_1 = \tilde{\beta}_1$, $\beta'_l = \tilde{\beta}'_l$ 
    and $\a_j-\a'_j = \tilde{\a}_j-\tilde{\a}'_j$, $j\in [l]$, $\beta'_j - \beta_{j+1} = \tilde{\beta}_j - \tilde{\beta}'_{j+1}$, $j\in [l-1]$.
    Fixing $\tilde{\a}_j$, $\tilde{\beta}_j$, $j\in [l]$ and using 
    the definition of the quantity $\delta_*$, 
    we see that  the number of possible pairs $(\a_j,\beta_j)$ does not exceed $|S|^{(2l-1)\delta_*}$
    and hence $\| r_{(HH^{-1})^l} \|_\infty \le |S|^{(2l-1)\delta_*} \le |S|^{2l\delta_*} \le \frac{|H|^{2l}}{K}$ as required.

    Below we use the argument similar to paper \cite{s_Chevalley}. 
    First of all, consider the case when  
    $\G$ is a Borel subgroup. 
    Take $h= (p_{s-1} p_s | q_{s-1} q_s) \in (HH^{-1})^s$, where $(2N)^{2s+1} < 2^{-5} p^{1/4}$ and consider the inclusion 
\begin{equation}\label{f:inclusion_gen_Borel}
g_1 h g_2 = 
\left( {\begin{array}{cc}
	\alpha & \beta \\
	\gamma & \delta \\
	\end{array} } \right) 
\left( {\begin{array}{cc}
	p_{s-1} & p_s \\
	q_{s-1} & q_s \\
	\end{array} } \right) 
\left( {\begin{array}{cc}
	a & b \\
	c & d \\
	\end{array} } \right) 
\in \mathbf{B}
\end{equation}
    In other words, $h\in g^{-1}_1 \mathbf{B} g^{-1}_2$ and hence taking another $h' = (p'_{s-1} p'_s | q'_{s-1} q'_s) \in (HH^{-1})^s$, we have 
    $h (h')^{-1} \in g^{-1}_1 \mathbf{B} g_1$. 
    Suppose that $g_1\in \mathbf{B}$ (the case $g_2\in \mathbf{B}$ can be considered similarly). 
    Then it is easy to see that $q'_s q_{s-1} \equiv q_s q'_{s-1} \pmod p$ hence $q'_s q_{s-1} = q_s q'_{s-1}$ and thus $q_s=q'_s$, $q_{s-1}=q'_{s-1}$.
    In other words, 
    the pair $(q_{s-1},q_s)$ is determined uniquely.
    Writing $\frac{p_s}{q_s} = [b_1,\dots,b_s]$,  we 
    obtain 
    that $\frac{q_s}{q_{s-1}} = [b_s,\dots,b_1]$ 
    and we can reconstruct the matrix $h$ (see details in \cite{s_Chevalley} or just use the second part of our ping--pong Lemma \ref{l:ping-pong_g}).
    Whence 
    we 
    get 
    as in formula \eqref{f:products_eq} that the number of possible inclusions is at most $|S|^{(2s-1) \delta_*}$. 
    Similarly, $(h')^{-1}h \in g_2 \mathbf{B} g_2^{-1}$ and if $g_2 \in \mathbf{B}$, then $p_{s-1}q'_{s-1} = q_{s-1}p'_{s-1}$. 
    Hence $p_{s-1} = p'_{s-1}$, $q_{s-1}=q'_{s-1}$ and we can reconstruct $(p_{s},q_{s})$ from $(p_{s-1},q_{s-1})$ in at most $2N$ ways. 
    Thus the number of possible inclusions is at most $2N |S|^{(2s-3)\delta_*}$.

    Now we can assume that both $g_1,g_2 \notin \mathbf{B}$. In view of the Bruhat decomposition 
	(i.e. one can put $d = \alpha = 0$, $\beta = b =1$, $\gamma = c = -1$) or just a direct calculation, it is easy to see (or consult \cite{s_Chevalley}) that inclusion \eqref{f:inclusion_gen_Borel} is equivalent to 
\begin{equation}\label{f:A_gBh''}
	\delta (q_s + \omega q_{s-1}) \equiv p_s + \omega p_{s-1} \pmod p \,,
\end{equation}	
	where $\omega = -a$. 
	Equation \eqref{f:A_gBh''} can be interpreted easily: any Borel subgroup fixes a point (the standard Borel subgroup fixes $\infty$) and hence inclusion \eqref{f:inclusion_gen_Borel} says that  our set $(HH^{-1})^s$ transfers $\omega$ to $\delta$.
	In other terms, identity \eqref{f:A_gBh''} says that the tuples $(q_s,q_{s-1},p_s, p_{s-1})$ belongs to a hyperspace with the normal vector 
	$(\delta,\delta \omega,-1,-\omega)$ and hence for some other solutions $(q'_s,q'_{s-1},p'_s, p'_{s-1})$, $(q''_s,q''_{s-1},p''_s, p''_{s-1})$, $(q'''_s,q'''_{s-1},p'''_s, p'''_{s-1})$ of \eqref{f:A_gBh''}, we get 
\begin{eqnarray}\label{f:det_A}	
	\begin{vmatrix}
		q_s & q_{s-1} & p_s & p_{s-1} \\ 
		q'_s & q'_{s-1} & p'_s & p'_{s-1} \\ 
		q''_s & q''_{s-1} & p''_s & p''_{s-1} \\ 
		q'''_s & q'''_{s-1} & p'''_s & p'''_{s-1} 
	\end{vmatrix}
	 \equiv 0 \pmod p \,.
\end{eqnarray}
    	If we solve equation \eqref{f:det_A} with elements from $(HH^{-1})^s$, then we arrive to an equation 
\begin{equation}\label{tmp:17.10_1'}
	X q_s + Y q_{s-1} + Z p_s + W p_{s-1} \equiv 0 \pmod p \,,
\end{equation}
	where $|X|, |Y|, |Z|, |W| < 2^{-2} p^{3/4}$, which is, actually, an equation in $\Z$. 
	We can assume that not all integer coefficients $X,Y,Z,W$ (which itself are some determinants of the matrix from \eqref{f:det_A}) vanish because otherwise we obtain a similar equation with a smaller number of variables.
	Combining \eqref{tmp:17.10_1'} and the identity $q_s p_{s-1} - p_s q_{s-1} = 1$, we derive
\[
	q_{s-1} p_s X = -p_{s-1} (Y q_{s-1} + Z p_s + W p_{s-1}) -X 
\]
	or, in other words, 
\begin{equation}\label{tmp:divisors}
	(X q_{s-1} + Z p_{s-1})(X p_s + Y p_{s-1}) = YZ p^2_{s-1} - X(W p_{s-1}^2 + 1) := f(p_{s-1}) \,.
\end{equation}
	Fix $p_{s-1} \le (2N)^{2s+1} < 2^{-5} p^{1/4}$ and suppose that $f(p_{s-1}) \neq 0$. 
	Then the number of the solutions to equation \eqref{tmp:divisors} can be estimated in terms of the divisor function as $N^{o_s(1)}$. 
	Further if we know $(q_{s-1},p_{s}, p_{s-1})$, then in view of \eqref{tmp:17.10_1'} we determine our matrix from $(HH^{-1})^s$ 
	uniquely. 
	Now in the case $f(p_{s-1}) = 0$, we see that there are at most two choices  for $p_{s-1}$ and fixing $q_s \le (2N)^{2s+1}$ 	we find the remaining variables using formulae  \eqref{tmp:17.10_1'}, \eqref{tmp:divisors}.
	Thus in view of our condition $|S| \ge N^{1+\d}$ 
    we obtain 
\[
    \max_{g_1,g_2\in \SL_2 (\F_p)}\, \sum_{x\in g_1 \mathbf{B} g_2}  r_{(HH^{-1})^l} (x)
    \le |H|^{2l-2s} \cdot \max_{g_1,g_2\in \SL_2 (\F_p)}\, \sum_{x\in g_1 \mathbf{B} g_2}  r_{(HH^{-1})^s} (x)
    \le 
\]
\begin{equation}\label{tmp:03.09_1}
    \le
    |H|^{2l-2s} 
    N^{o_s(1)} 
    \left( (2N)^{2s+1} + 2N |H|^{\delta_* (2s-3)} + |H|^{\delta_* (2s-1)}\right) \,.
\end{equation}
    and \eqref{f:K_subgr} follows in the case of any Borel subgroup $\G$.

    It remains to consider the rest of  maximal subgroups from $\SL_2 (\F_p)$ but the structure of the lattice of the subgroups is known to be very simple for this group, see \cite{Dickson}. Excluding subgroups  of finite size (and considered Borel subgroups) any maximal subgroup is a dihedral  group of size $O(p)$, see \cite[Theorems 6.17, 6.25]{Dickson}, \cite{BG} and \cite{NG_S}. 
    Below we use rather rough arguments just to show that an analogue of bound \eqref{tmp:03.09_1} takes place for any dihedral  subgroup, of course it will be enough for our purpose (more delicate calculations can be found in \cite{NG_S}). 
    Thus we consider 
\[
\mathcal{C}_{\varepsilon}:=\left\{\left(\begin{array}{cc}
u & \varepsilon v \\
v & u
\end{array}\right): u, v \in \mathbb{F}_{p},\, u^{2}-\varepsilon v^{2}=1\right\} \,,
\]
    where $\eps$ is a primitive root. 
    Our equation is 
$$
\left(\begin{array}{cc}
x u+y v & \varepsilon x v+y u \\
z u+w v & \varepsilon z v+w u
\end{array}\right)
=
\left(\begin{array}{cc}
x & y \\
z & w
\end{array}\right)\left(\begin{array}{cc}
u & \varepsilon v \\
v & u
\end{array}\right)=\left(\begin{array}{cc}
p_{s-1} & p_{s} \\
q_{s-1} & q_{s}
\end{array}\right)\left(\begin{array}{cc}
X & Y \\
Z & W
\end{array}\right)
=
$$
$$
=
\left(\begin{array}{cc}
p_{s-1}X + p_s Z & p_{s-1}Y + p_s W \\
q_{s-1}X + q_s Z & q_{s-1}Y + q_s W
\end{array}\right)
$$
with $x w-y z=X W-Y Z=1.$ 
It follows that 
$$
    X = q_s (xu + yv) - p_s (zu+wv) = (q_s x - p_s z)u + (q_s y - p_s w) v = A u + B v \,,
$$
and 
$$
    Y = q_s (\eps xv+yu) - p_s (\eps zv + wu) = (q_s y - p_s w)u + (q_s \eps x-p_s \eps z) v = Cu + Dv \,.
$$
    From 
    $xw-yz=1$ one has 
    $(A,B)\neq (0,0)$ and $(C,D)\neq (0,0)$. 
    For concreteness let us  assume that $A\neq 0$, $C\neq 0$.
    Using the last equations, as well as the identity $u^{2} = \varepsilon v^{2} + 1$ and multiplying it by $A^2\neq 0$ and $C^2\neq 0$, correspondingly, we get 
\begin{equation}\label{f:v1}
    \a v^2 + \beta v + \gamma := (B^2 - \eps A^2) v^2 - 2BXv + X^2-A^2 = 0
\end{equation}
    and, similarly, 
\begin{equation}\label{f:v2}
    \a_* v^2 + \beta_* v + \gamma_* := 
    (D^2 - \eps C^2) v^2 - 2DYv + Y^2-C^2 = 0
\end{equation}
    Since $\eps$ is a primitive root and hence in particular, $\eps$ is not a square, it follows that the quadratic equations are non--trivial. In other words, $\a \neq 0$ and $\a_* \neq 0$ for any $(p_s,q_s)$. 
    We can assume that $v\neq 0$ because otherwise it gives at most eight    points in our intersection. 
    Now if $v\neq 0$, then excluding $v$ from \eqref{f:v1}, \eqref{f:v2},  we arrive to the relation between  $p_s$ and $q_s$, namely, 
\[
    (\a \gamma_* - \a_* \gamma)^2 = (\beta \gamma_* - \beta_* \gamma) (\a \beta_* - \beta \a_*) \,.
\]
    One can check that this is a non--trivial equation and hence \eqref{f:K_subgr} follows with  $K= \left( \frac{|H|}{N} \right)^{2s} N^{-1-o_s(1)}$. 
    Indeed, the homogeneous part of degree eight of the last equation is $((BC)^2 - (DA)^2)^2$ and hence it is zero iff
    $\eps (q_{s} x- p_s z)^2 = - (q_s y - p_s w)^2$. 
    It follows that $y^2 + \eps x^2 = w^2+\eps z^2 = 0$ and $-\eps xz = yw$ (otherwise we have a non--trivial equation in $q_s,p_s$). It is easy to check using $xy-zw=1$ that this is impossible. 

    Now 
    it remains to 
    obtain \eqref{f:BG_new-} and we use similar arguments as above. 
    We take $H = \{ u_{\a_k} g_k u_{\a_{k-1}} \dots g_2 u_{\a_1} g_1 u_{\a_0} ~:~ (\a_0,\dots, \a_k) \in S \}$ and derive an analogue of formula  \eqref{f:prod_form} for elements of $(HH^{-1})^m$
\[
    u_{\a_k} g_k u_{\a_{k-1}} \dots g_2 u_{\a_1} g_1 u_{\a_0-\a^{(1)}_0} g^{-1}_1 u_{-\a^{(1)}_1} g^{-1}_2 
    \dots 
\]
\begin{equation}\label{f:prod_g_k}
    \dots 
    g_k^{-1} u_{\a^{(2)}_k - \a^{(1)}_k} \dots u_{\a^{(m-2)}_0-\a^{(m-1)}_0} g^{-1}_1 u_{\a^{(m-1)}_{1}} \dots g^{-1}_{k-1} u_{-\a^{(m-1)}_{k-1}} g^{-1}_k u_{-\a^{(m-1)}_k} \,.
\end{equation}
    As in the proof of Theorem \ref{t:BG'} we can assume that $g_j = u'_{j} w u''_j$ (in other words, loosing the constants,  which depend on $g_1,\dots,g_k$ we can suppose that $\lambda (g_j)$ and $t_1 (g_j)$ in \eqref{f:prod_form-} equal  one).
    Thus as above in formula \eqref{f:products_eq}, we see that $\| r_{(HH^{-1})^m} \|_\infty \le |S|^{\delta_* (2m-1)}$.
    The only difference between the case $k=1$ is a new bound for $q_s$ (again $(p_s,q_s)$ determines the matrix uniquely thanks to the second part of Lemma \ref{l:ping-pong_g} or, alternatively, via the uniqueness of the continued fraction expansion).
    From \eqref{f:prod_g_k} we see that any element of   $(HH^{-1})^m$ does not exceed $(2N)^{2km+1}$ and hence now we can define $s$
    as the maximal integer 
    such that 
    $(2N)^{2ks+1} < 2^{-5} p^{1/4}$. 
    On the other hand, for any $m$ one has  $\| r_{(HH^{-1})^m} \|_1 = |H|^{2m} \ge N^{2mk(1+\d)}$ and as in \eqref{tmp:06.09_1}, we derive
\[
    K = N^{-o_s(1)} \min \left\{ \left( \frac{|H|}{N^k} \right)^{2s} N^{-1}, 
    |H|^{2s(1-\delta_*)} N^{-1}, 
    |H|^{2s(1-\delta_*)} \right\}
    \ge 
        p^{\Omega(\d)} 
\]
    because the condition $N\le p^{c\delta/k}$ allows us to take $s$ to be sufficiently large such that $s\gg 1/\d$. 
    As before we have used a simple  bound   $\delta_* \le (1+\delta)^{-1} \le 1-\delta/2$, which follows from the definition of the quantity $\delta_*$.  
    The rest of the argument coincides with the case $k=1$. 
    From calculations \eqref{tmp:06.09_1*}---\eqref{tmp:06.09_1**},   it follows that $\kappa$ does not depend on $k$ (just because $K$ does not depend on $k$).  
This completes the proof.
$\hfill\Box$
\end{proof}

\begin{remark}
    The condition $|S|>N^{1+\delta} = |[N]^2|^{1/2+\delta/2}$ can be interpreted as "Hausdorff dimension of the  correspondent Cantor--type set"\, is greater than $1/2$, see \cite{s_Chevalley}. 
    Also, the condition $g_1,\dots, g_k \notin \SL_2 (\F_p)\setminus \mathbf{B}$ is not really crucial. One can see from the proof that the argument works (with a worse constant $\kappa(\d)>0$, of course) if just one $g_j$ is a non--linear map. 
\end{remark}

{\bf Question.}
Is it possible to relax the condition $|S| \ge N^{k(1+\d)}$  in Theorem \ref{t:BG_new} to $|S| \ge N^\delta$ (even in the case $k=1$)?

\bigskip 

Now let us formulate a consequence of Theorem \ref{t:BG_new} for large subsets of $\F_p \times \F_p$, having the following "measurable"\, form. 
For simplicity, we use just the case $k=1$ of Theorem \ref{t:BG_new}.

\begin{corollary}
    Let $\delta, \tilde{\d} \in (0,1]$,  
    $A,B\subseteq \F_p$ be sets and $g\in \SL_2 (\F_p) \setminus \mathbf{B}$ be a map. 
    Suppose that $S \subseteq \F_p \times \F_p$ is a set, having the form 
\begin{equation}\label{cond:BG_new_c}
    S = \left( \bigsqcup_{j\in J} S_j \right) \bigsqcup \Omega \,, 
\end{equation}
    where each 
    $S_j$ belongs to a shift of $[N] \times [N]$, $N\le p^{c\d}$, where $c>0$ is an absolute constant.
    Further let $|S_j| \ge N^{1+\delta}$ for all $j\in J$ and $|\Omega| \le |S|^{1-\tilde{\d}}$. 
    Then there is 
    a
    constant $\kappa = \kappa (\d,\tilde{\d}) >0$ such that 
\begin{equation}\label{f:BG_new_c}
    |\{ g(\a+a) = \beta+b ~:~ (\a,\beta) \in S,\,a\in A,\, b\in B \}|
    - \frac{|S||A||B|}{p}
    \ll \sqrt{|A||B|} |S|^{1-\kappa} \,.
\end{equation}
\label{c:BG_new}
\end{corollary}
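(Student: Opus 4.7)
The plan is to split
$T(S) := |\{g(\alpha+a)=\beta+b : (\alpha,\beta)\in S,\, a\in A,\, b\in B\}|$
according to the disjoint decomposition \eqref{cond:BG_new_c} and apply Theorem \ref{t:BG_new} (case $k=1$) to each piece $S_j$ after translating it into $[N]\times [N]$. For the exceptional part, the equation $g(\alpha+a)=\beta+b$ with $(\alpha,\beta)$ and $a$ fixed determines $b$ uniquely, so $T(\Omega) \le |\Omega|\min(|A|,|B|) \le |S|^{1-\tilde\delta}\sqrt{|A||B|}$; the corresponding main-term error $|\Omega||A||B|/p$ is of the same order since $|A||B|\le p^2$.

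For each $j \in J$ write $S_j = (x_j,y_j)+S_j'$ with $S_j'\subseteq [N]^2$, and make the bijective change of variables $\alpha'=\alpha-x_j$, $\beta'=\beta-y_j$, $a'=a+x_j$, $b'=b+y_j$. The equation becomes $g(\alpha'+a')=\beta'+b'$, with $g$ \emph{unchanged}; only the sets are translated, to $A+x_j$ and $B+y_j$, whose cardinalities equal those of $A$ and $B$. Applying Theorem \ref{t:BG_new} to $(S_j',\,A+x_j,\,B+y_j)$ (using $|S_j'|=|S_j|\ge N^{1+\delta}$ and $N\le p^{c\delta}$) yields, for some $\kappa_0 = \kappa(\delta) > 0$,
\[
T(S_j) = \frac{|S_j||A||B|}{p} + O_g\!\left(\sqrt{|A||B|}\,|S_j|^{1-\kappa_0}\right),
\]
and the implicit constant is uniform in $j$ precisely because $g$ has not been modified.

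Summing over $j$ and combining with the bound on $\Omega$,
\[
\left|T(S) - \frac{|S||A||B|}{p}\right| \ll_g \sqrt{|A||B|}\left(|S|^{1-\tilde\delta} + \sum_j |S_j|^{1-\kappa_0}\right).
\]
The uniform lower bound $|S_j|\ge N^{1+\delta}$ gives $\sum_j |S_j|^{1-\kappa_0} \le |S|\,N^{-(1+\delta)\kappa_0}$. To convert this into the form $|S|^{1-\kappa_1}$ with $\kappa_1 = \kappa_1(\delta,\tilde\delta) > 0$, one uses the upper bound $|S_j|\le N^2$ coming from the containment in a shifted $[N]\times[N]$, together with the constraint $N\le p^{c\delta}$ from the hypothesis, to control the ratio $\log|S|/\log N$ across the admissible range of parameters. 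Taking $\kappa := \min(\tilde\delta,\kappa_1)$ then yields \eqref{f:BG_new_c}.

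The argument is primarily a clean accounting exercise; the crucial structural observation is that the translation taking $S_j$ into $[N]^2$ leaves $g$ invariant, so Theorem \ref{t:BG_new} can be invoked identically across all $j$ with a single $g$-dependent constant. The most delicate step is the final conversion of $|S|\,N^{-(1+\delta)\kappa_0}$ into $|S|^{1-\kappa}$, where one must be careful to extract an exponent depending only on $\delta$ and $\tilde\delta$ from the interplay of the lower bound $|S_j|\ge N^{1+\delta}$, the ceiling $|S_j|\le N^2$, and the constraint $N\le p^{c\delta}$.
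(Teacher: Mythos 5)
Your proposal follows the paper's own argument: the paper disposes of $\Omega$ with exactly the trivial bound $\min\{|A|,|B|\}\,|S|^{1-\tilde{\delta}} \le \sqrt{|A||B|}\,|S|^{1-\tilde\delta}$ and then says "for each $S_j$ one can apply Theorem \ref{t:BG_new}" --- which is what you do, and your explicit translation $(\a,\b,a,b)\mapsto(\a-x_j,\b-y_j,a+x_j,b+y_j)$, which moves $S_j$ into $[N]^2$ while leaving $g$ untouched and only shifting $A,B$, is the correct (and in the paper implicit) way to make that application legitimate with a single $g$-dependent constant, consistent with Remark \ref{r:lambda}.

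The one step where you go beyond the paper's one-sentence proof is the conversion of $\sum_j|S_j|^{1-\kappa_0}\le |S|\,N^{-(1+\delta)\kappa_0}$ into $|S|^{1-\kappa_1}$, and there your justification does not hold as stated. The hypothesis $N\le p^{c\delta}$ bounds $N$ from \emph{above}, so it cannot control the ratio $\log|S|/\log N$: nothing prevents $N$ from being, say, of size $\log p$ while $|J|$ is so large that $|S|$ is close to $p^2$, in which case $|S|\,N^{-(1+\delta)\kappa_0}$ is far bigger than any fixed power saving $|S|^{1-\kappa_1}$ with $\kappa_1=\kappa_1(\delta,\tilde\delta)$. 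What the piecewise argument honestly yields is an error term $\ll_g \sqrt{|A||B|}\bigl(|S|\,N^{-(1+\delta)\kappa_0}+|S|^{1-\tilde\delta}\bigr)$, i.e.\ a power-of-$N$ saving rather than a power-of-$|S|$ saving; these coincide (up to adjusting $\kappa$) only when $|S|$ is bounded by a fixed power of $N$, for instance when $|J|\le N^{O(1)}$. To be fair, the paper's own proof simply sums the per-piece errors and glosses over exactly the same point, so you have reproduced the intended argument; but you should either state the conclusion with the $N^{-\kappa}$ saving, or add a hypothesis (a lower bound on $N$ in terms of $p$, or an upper bound on $|J|$) before claiming the exponent $\kappa(\delta,\tilde\delta)$ in the form \eqref{f:BG_new_c}.
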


Indeed, obviously the set $\Omega$ coins $\min\{|A|,|B|\} |S|^{1-\tilde{\d}}$ into \eqref{f:BG_new_c} and for each $S_j$, $j\in J$ one can apply Theorem \ref{t:BG_new}.

\section{First applications}
\label{sec:applications} 


In this Section we obtain a series applications to Incidence Geometry and Probability. We use Theorems  \ref{t:BG'_intr}, \ref{t:BG'}, 
as well as some calculations from the proof of Theorem \ref{t:BG_new}.

Our first application concerns lazy Markov chains, e.g., see \cite{He}.
Namely, applying  Theorem \ref{t:BG'} and using the same scheme as in \cite{s_CDG}, \cite{He}, we immediately obtain

\begin{theorem}
        Let $p$ be a prime number, $\gamma \in \F_p^*$, and $g\in \SL_2 (\F_p) \setminus \mathbf{B}$.
    Also, let $\eps_{j}$ be the random variables distributed  uniformly on $\{ \gamma^{}, -\gamma\}$. 
    Consider the lazy Markov chain $X_0,X_1,\dots, X_n, \dots$ defined by 
\[
    X_{j+1}=\left\{\begin{array}{ll}
 g\left(X_{j}\right) + \varepsilon_{j+1} & \text { with probability } 1 / 2\,, \\
X_{j} & \text { with probability } 1 / 2 \,.
\end{array}\right.
\]
    Then for any $c>0$ and any $n = c \log p$ one has 
\[
    \| P_n - U\| := \frac{1}{2} \max_{A \subseteq \F^*_p} \left| \mathrm{P} (X_n \in A) - \frac{|A|}{p-1} \right| \le e^{-O(c)} \,.
\]
The same is true for the chain $X_{j+1} = g\left(X_{j}\right) + \varepsilon_{j+1}$, where $\eps_j$ denote the random variables distributed  uniformly on $\{ 0, \gamma, -\gamma\}$. 
\label{t:g_Markov}
\end{theorem}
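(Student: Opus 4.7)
My plan is to follow the scheme used in \cite{s_CDG} and \cite{He} for similar M\"obius-driven random walks on $\F_p^*$: reduce the mixing estimate to the $L^2$-equidistribution of the associated convolution on $\SL_2(\F_p)$ and run the Bourgain--Gamburd machine, using Theorem~\ref{t:BG'} as the expansion input.

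First I would pass from total variation to collision probability by Cauchy--Schwarz:
\[
    \|P_n - U\|^2 \ll p\,\Bigl(\|P_n\|_2^2 - \tfrac{1}{p-1}\Bigr) = p\,\Bigl(\mathrm{P}(X_n = X'_n) - \tfrac{1}{p-1}\Bigr),
\]
where $X'_n$ is an independent copy of the chain from the same start $X_0$. Writing $X_n = w_n \cdot X_0$ with $w_n$ a random $n$-letter word in $\{I, u_\gamma g, u_{-\gamma} g\}$ (acting by M\"obius transformations), the right-hand side equals the mass that $\mu^{*n}*\check\mu^{*n}$ puts on the stabilizer $H$ of $X_0$ in $\SL_2(\F_p)$, a conjugate of $\mathbf{B}$ of size $p(p-1)$. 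It therefore suffices to show that $\mu^{*n}$ is essentially equidistributed on $\SL_2(\F_p)$ for $n \ge c\log p$, in the quantitative sense $\|\mu^{*n}\|_\infty \le (1+e^{-\Omega(c)})/|\SL_2(\F_p)|$.

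This is what the Bourgain--Gamburd machine delivers. Non-concentration on proper subgroups holds by the usual dichotomy: the assumption $g\notin\mathbf{B}$ rules out being trapped in any conjugate Borel, and the presence of the two distinct shifts $u_{\pm\gamma}$ together with Dickson's classification of maximal subgroups of $\SL_2(\F_p)$ rules out concentration on any torus or small subgroup, exactly as in \cite{BG}, \cite{NG_S}. The $L^2$-flattening step at each scale is furnished by Theorem~\ref{t:BG'}: after placing a typical element of $\mathrm{supp}(\mu^{*2k})$ into its Bruhat normal form $u_{t_1}wdu_{t_2}$, the contraction of $\|\mu^{*2k}*\check\mu^{*2k}\|_2^2$ reduces to bounding the number of incidences between a set of shifts and a Cartesian product $A\times B\subseteq \F_p\times\F_p$, which is precisely the content of Theorem~\ref{t:BG'}. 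Plugging the resulting flatness back into Step~1 gives $\|P_n - U\| \ll e^{-\Omega(c)}$, and the non-lazy three-state version with $\eps_j\in\{0,\gamma,-\gamma\}$ follows from the same argument since the two step distributions differ only by the allocation of laziness.

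The main obstacle is the bookkeeping in the Bourgain--Gamburd iteration: at every intermediate scale $k \le n$ one must verify that the parameters in the application of Theorem~\ref{t:BG'} (in particular the constraint $N \le p^{c\delta}$ on the window of shifts) remain admissible, and that the $\kappa$ provided at each convolution composes into a definite cumulative $\Omega(c)$ in the exponent over the $O(\log p)$ steps. This is standard but is the reason the final constant in $e^{-O(c)}$ is left implicit.
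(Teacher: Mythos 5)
Your opening reduction (Cauchy--Schwarz to the collision probability, then to the mass that the word measure gives to the stabilizer of $X_0$) is fine, but the way you propose to finish creates a genuine gap: you then ask for $\|\mu^{*n}\|_\infty \le (1+e^{-\Omega(c)})/|\SL_2(\F_p)|$ for the measure $\mu$ supported on the \emph{two (or three) fixed elements} $u_{\pm\gamma}g$ (plus laziness), for an \emph{arbitrary} $g\in \SL_2(\F_p)\setminus\mathbf{B}$ and arbitrary $\gamma\in\F_p^*$. That statement is uniform expansion for arbitrary two-element generating sets of $\SL_2(\F_p)$, which is strictly stronger than the theorem you are proving and is not known. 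The steps you wave through are exactly where it breaks: the non-concentration/escape estimates of the Bourgain--Gamburd machine in \cite{BG}, \cite{NG_S} rest on girth/free-subgroup information coming from generators that are reductions of fixed matrices in $\SL_2(\Z)$; for $u_{\pm\gamma}g$ with $g,\gamma$ arbitrary mod $p$ no such input exists (avoiding it is the raison d'\^etre of this paper), and the easy algebraic fact that $\langle u_{\gamma}g,u_{-\gamma}g\rangle\supseteq\langle u_{2\gamma}, g\rangle=\SL_2(\F_p)$ gives nothing quantitative. Likewise, Theorem \ref{t:BG'} cannot serve as the $L^2$-flattening input for this walk: it is a statement about the \emph{large} family of conjugates $u_{-c}gu_{c}$ with $c$ ranging over a progression of length $N$ acting on the projective line, not a bound on $\|\mu^{*2k}\ast\check\mu^{*2k}\|_2$ for a measure carried by two fixed group elements; your ``Bruhat normal form'' reduction of the flattening step to Theorem \ref{t:BG'} is not substantiated and I do not see how to make it work.

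The paper's route is different and stays at the level of the projective-line chain: following the scheme of \cite{He} and \cite{s_CDG}, one never flattens the two-generator group walk. Instead, the randomness accumulated by the chain supplies a long progression of shifts $c$ (the word $u_{\eps_k}g\cdots u_{\eps_1}g$ can be rewritten through the conjugates $u_{-s}gu_{s}$ with $s$ running over partial sums of the $\eps_j$), and Theorem \ref{t:BG'} is applied precisely in its incidence form $g(c+a)=c+b$, $c$ in a progression, to show the chain equidistributes after $O(\log p)$ steps. So the correct use of Theorem \ref{t:BG'} is as a substitute for the expander property of the $N$-element conjugate family, not as a flattening lemma for $\mu^{*n}$; as written, your argument, if it worked, would prove uniform expansion of $\{u_{\gamma}g,u_{-\gamma}g\}$, and that is where it fails.
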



Now we obtain 
two 
applications to some problems from Incidence Geometry over $\F_p$.
The first one concerns M\"obius transformations and  we need 
\cite[Theorem 3.2]{RW} 
(also, see \cite[Theorem 3]{WW_hyp}).

\begin{theorem}
Let $A \times B$ be a set of points in $\mathbb{F}_{p}^{2}$, and let $T$ be any set of M\"obius transformations, $|T|>|A|$, $|A|\le \sqrt{p}$.  
Then 
$$
I(A \times B, T) \ll 
|A|^{4 / 5}|B|^{3 / 5}|T|^{4 / 5}+|A|^{6 / 5}|B|^{7 / 5}|T|^{1 / 5}+|T| \,.
$$
\label{t:hyp_inc}   
\end{theorem}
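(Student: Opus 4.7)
The plan is to reduce the incidence problem to a point--plane incidence question in $\mathbb{F}_p^3$ and invoke Rudnev's point--plane theorem. A M\"obius transformation has the form $\phi(x) = (\alpha x + \beta)/(\gamma x + \delta)$, so $\phi(a) = b$ is equivalent to $\gamma(ab) + \delta b - \alpha a - \beta = 0$. This is linear in the monomials $a, b, ab$. Accordingly, I would embed $A \times B$ into $\mathbb{F}_p^3$ via the Veronese--type lift $(a,b) \mapsto (a,b,ab)$, producing a set $P$ of size $|A||B|$, and parametrize each $\phi \in T$ by the plane $\Pi_\phi: -\alpha x + \delta y + \gamma z = \beta$. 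Then $I(A\times B, T)$ equals the number of incidences $I(P, \Pi)$ between $|A||B|$ points and $|T|$ planes in $\mathbb{F}_p^3$.

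Next I would apply the Rudnev point--plane bound: if $|P| \ll p^2$ and $|\Pi| \le |P|$, then
\[
I(P,\Pi) \ll |P|^{1/2} |\Pi| + k |\Pi| + |\Pi|,
\]
where $k$ is the maximum number of collinear points in $P$. The condition $|A| \le \sqrt{p}$ from the hypothesis guarantees $|P|\le p$ so we are safely inside the range of validity. A straightforward computation shows that a line $\{(a_0+tv_1, b_0+tv_2, a_0b_0 + tv_3)\}$ meets the saddle $z = xy$ in at most two points unless $v_1 v_2 = 0$, in which case the line lies in the surface along a horizontal fiber $\{y = b_0\}$ or a vertical fiber $\{x = a_0\}$. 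Thus the only rich lines in $P$ are these axis--parallel fibers, each carrying $|A|$ or $|B|$ lifted points.

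The main obstacle is therefore the rich--line contribution, which would otherwise give a trivial bound $k|T| \sim \max(|A|,|B|)|T|$. To circumvent this I would perform a popularity/dyadic decomposition: separate transformations $\phi \in T$ according to whether the plane $\Pi_\phi$ contains more or fewer than a threshold $\tau$ lifted points on a single axis fiber. A $\Pi_\phi$ rich on a horizontal fiber $\{y=b_0\}$ means $\phi$ sends many elements of $A$ into $\{b_0\}$, which is forbidden since $\phi$ is a bijection---so only the line structure itself matters, and one obtains an upper bound on the number of "bad" incidences using the trivial estimate that each $\phi\in T$ hits at most $|A|$ points of $A\times B$, giving the pure $|T|$ term. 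For the remainder one applies Rudnev with $k \le \tau$, and optimizing $\tau$ against the two main terms should produce the claimed exponents $4/5,\ 3/5,\ 4/5$ for the generic regime and the secondary term $|A|^{6/5}|B|^{7/5}|T|^{1/5}$ from the opposite regime $|P| \ge |\Pi|$ (where the roles of $P$ and $\Pi$ in Rudnev's inequality are swapped via duality, and the hypothesis $|T|>|A|$ controls which term dominates).

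I expect the subtlest step to be the optimization: one must carefully track how the dyadic threshold interacts with Rudnev's two regimes (depending on whether $|A||B| \le |T|$ or the reverse), and verify that the axis--parallel rich lines really contribute only the single $|T|$ term rather than an $\max(|A|,|B|)|T|$ term. This requires the observation that such rich lines correspond to pencils of M\"obius transformations sharing one common image value, and a direct count of how many $\phi\in T$ can map any particular element of $A$ to any particular element of $B$, using the fact that a M\"obius map is determined by its values on three points.
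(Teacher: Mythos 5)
You should first note that the paper does not actually prove Theorem \ref{t:hyp_inc}: it is imported verbatim from \cite[Theorem 3.2]{RW} (see also \cite[Theorem 3]{WW_hyp}), so the relevant comparison is with those proofs. They do run on Rudnev's point--plane theorem, and your lift $(a,b)\mapsto (a,b,ab)$ with planes $-\alpha x+\delta y+\gamma z=\beta$ is indeed the standard way M\"obius/hyperbola incidences are fed into it; but they do not apply it in the one--shot way you propose.

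The gap is quantitative and sits in the main term, not in the collinearity term. A single application of Rudnev's bound to $|P|=|A||B|$ lifted points and $|\Pi|=|T|$ planes gives at best $I\ll |T|(|A||B|)^{1/2}+k|T|$ (in the regime $|A||B|\le |T|$), and this is strictly weaker than the stated estimate: for $|A|=|B|=n\le \sqrt{p}$ and $|T|=n^{3}$ the theorem asserts $I\ll n^{19/5}$, while already your main term $|T|(|A||B|)^{1/2}=n^{4}$ exceeds it. Hence no popularity/dyadic treatment of the rich fibers can repair the argument: the $|T|^{4/5}$ saving cannot come from one application of the point--plane bound. In the actual proofs one first performs a Cauchy--Schwarz/H\"older step, i.e. one bounds the number of $k$--rich M\"obius transformations by applying Rudnev's theorem to a configuration encoding \emph{pairs} of incidences (exploiting the Cartesian structure of $A\times B$ and the fact that a M\"obius map is determined by three values), and only then sums dyadically over the richness $k$; this two--step scheme is what produces the exponents $4/5$, $3/5$ and the asymmetric secondary term. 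Two further defects in the outline as written: (i) in Rudnev's theorem the parameter $k$ is a property of the point set alone, so here $k=\max(|A|,|B|)$ coming from the rulings of $z=xy$; the (correct) observation that each plane $\Pi_\phi$ meets every ruling in at most one point does not delete the $k|\Pi|$ term unless you state and justify a refined version of the theorem, and even then the main term is still too large; (ii) your claim that the ``bad'' incidences contribute only $O(|T|)$ via ``each $\phi$ hits at most $|A|$ points'' in fact gives $|A||T|$, not $|T|$, which again exceeds the claimed bound in admissible ranges. Finally, in the dual regime $|T|\le |A||B|$ the relevant degeneracy parameter becomes the maximal number of planes of $\Pi$ through a common line, i.e. the maximal number of elements of $T$ in a common pencil $\{\phi:\phi(a_0)=b_0\}$, which is not controlled for an arbitrary $T$; so the appeal to ``duality'' does not come for free either.
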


Using this result we improve \cite[Corollary 3, part 1]{WW_hyp} for sets having non--trivial upper bound for the additive energy $\E^{+}$.

\begin{theorem}
    Let $A,B,C \subseteq \F_p$.
    Then 
\[
    \left| \left\{ (a,a',b,b',c,c') \in A^2 \times B^2 \times C^2 ~:~ \frac{1}{a+b}+c = \frac{1}{a'+b'}+c' \right\} \right|
    \lesssim 
\]
\begin{equation}\label{f:cont2}
    \lesssim 
    |A|^{7/5} |B|^{8/5} |C|^{6/5} (\E^{+}(C))^{1/5} \,. 
\end{equation}
    In particular, for $|A|\le \sqrt{p}$ one has 
\begin{equation}\label{f:cont2'}
    |(A+A)^{-1}+A| = \left| \left\{ \frac{1}{a+b}+c ~:~ a,b,c \in A \right\} \right| \gtrsim 
    |A|^{6/5} \,.
\end{equation}
\label{t:cont2}
\end{theorem}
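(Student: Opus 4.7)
Denote by $\mathcal{N}$ the sextuple count on the left of \eqref{f:cont2}. Substituting $s=a+b$, $s'=a'+b'$, $d=c'-c$ rewrites the defining equation as $s'=T_d(s)$, where $T_d(x)=x/(1+dx)$ is a M\"obius transformation in $\SL_2(\F_p)$, giving
$$\mathcal{N}\;=\;\sum_d r_{C-C}(d)\, N(d),\qquad N(d)\;:=\;\sum_s r_{A+B}(s)\, r_{A+B}(T_d(s)).$$
Thus $\mathcal{N}$ is a doubly-weighted count of incidences between the point set $(A+B)\times(A+B)$ and the M\"obius family $\{T_d: d\in C-C\}$.

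To extract $\E^{+}(C)^{1/5}$, I apply H\"older with exponents $(5/4,5)$ on the $d$-sum and interpolate $\sum_d r_{C-C}(d)^{5/4}$ between $\sum_d r_{C-C}=|C|^2$ and $\sum_d r_{C-C}^2=\E^{+}(C)$, obtaining
$$\mathcal{N}\;\le\;|C|^{6/5}\,\E^{+}(C)^{1/5}\,\Big(\sum_d N(d)^5\Big)^{1/5}.$$
The remaining task is to establish $\sum_d N(d)^5\lesssim |A|^7|B|^8$, which is exactly where Theorem \ref{t:hyp_inc} enters. For this, I dyadically decompose the multiplicities $r_{A+B}$ into level sets $X_i=\{s:r_{A+B}(s)\asymp 2^i\}$ and the values of $N$ into $D_k=\{d\neq 0:N(d)\asymp 2^k\}$; the $d=0$ contribution, which equals $|C|\,\E^{+}(A,B)$, is easily absorbed by the target bound. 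For each $k$ one has
$$2^k|D_k|\;\asymp\;\sum_{d\in D_k}N(d)\;\lesssim\;\sum_{i,j}2^{i+j}\,I\!\left(X_i\times X_j,\{T_d:d\in D_k\}\right),$$
and Theorem \ref{t:hyp_inc} bounds the incidence by its three standard summands. Applying H\"older to the $(i,j)$-sums with the constraints $\sum_i 2^i|X_i|=|A||B|$ and $\sum_i 2^{2i}|X_i|=\E^{+}(A,B)\le\min(|A|,|B|)\cdot|A||B|$ extracts an upper bound on $|D_k|$ that decays like $1/2^{5k}$; resumming $\sum_k 2^{5k}|D_k|$ and exploiting the $A\leftrightarrow B$ symmetry of $\mathcal{N}$ produces $\sum_d N(d)^5\lesssim|A|^7|B|^8$, as needed. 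The second estimate \eqref{f:cont2'} then follows by specialising to $A=B=C$ and applying Cauchy--Schwarz: writing $r(v)$ for the multiplicity of $v$ as a value of $1/(a+b)+c$, one has $\mathcal{N}=\sum_v r(v)^2\ge|A|^6/|(A+A)^{-1}+A|$, and combining with \eqref{f:cont2} and the trivial $\E^{+}(A)\le|A|^3$ forces $|(A+A)^{-1}+A|\gtrsim|A|^{6/5}$.

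The main obstacle is the dyadic/H\"older bookkeeping required to recover the sharp exponents $7/5$ and $8/5$: the choice of H\"older parameters in the $(X_i,X_j)$-sums (which must bring in $\E^{+}(A,B)$ at the correct place) has to be balanced carefully against the $A\leftrightarrow B$ symmetry of $\mathcal{N}$, and one must verify that the secondary contribution $|X_i|^{6/5}|X_j|^{7/5}|D_k|^{1/5}$ and the trivial $|D_k|$-term from Theorem \ref{t:hyp_inc}, together with the $d=0$ piece, are all dominated by the main term in every relevant regime.
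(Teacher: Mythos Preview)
Your top-level H\"older (exponents $5/4,5$ on the $d$-sum, interpolating $\sum r_{C-C}^{5/4}$ between $|C|^2$ and $\E^+(C)$) is exactly what the paper does, and your intermediate target $\sum_{d\neq 0}N(d)^5\lesssim|A|^7|B|^8$ is the correct bound. The gap is in how you propose to prove it. Decomposing $r_{A+B}$ into level sets $X_i$ and applying Theorem~\ref{t:hyp_inc} to $X_i\times X_j$ against the one-parameter family $\{T_d:d\in D_k\}$, the main term $|X_i|^{4/5}|X_j|^{3/5}|D_k|^{4/5}$ gives $2^{5k}|D_k|\lesssim\bigl(\sum_i 2^i|X_i|^{4/5}\bigr)^5\bigl(\sum_j 2^j|X_j|^{3/5}\bigr)^5$. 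The optimal H\"older interpolation between $\sum_i 2^i|X_i|=|A||B|$ and $\sum_i 2^{2i}|X_i|=\E^+(A,B)\le\min(|A|,|B|)\,|A||B|$ then yields only $\sum_d N(d)^5\lesssim\min(|A|,|B|)^3(|A||B|)^7$, which for $|A|=|B|$ is $|A|^{17}$ rather than the required $|A|^{15}$. The $A\leftrightarrow B$ symmetry cannot close this gap, and no reweighting of the $(i,j)$-H\"older does either: the loss comes from feeding the incidence theorem a one-dimensional family $\{T_d\}$ together with a \emph{weighted} point set.

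The paper sidesteps this by a different factorisation: it keeps $A$ as an unweighted point set and absorbs both $B$ and $C$ into the transformations, setting $G=\{u_{-b'}u^*_{c'}:b'\in B,\,c'\in C\}$ so that $\sigma=\sum_g r_{GG^{-1}}(g)\,|A\cap gA|$. After the same H\"older one needs only $\E(G)|G|^6=|B|^8|C|^6\E^+(C)$ together with the fifth-moment bound $\sum_{g\in\SL_2(\F_p)}|A\cap gA|^5\ll|A|^7\log|A|$, which follows from Theorem~\ref{t:hyp_inc} by a single dyadic pigeonhole on the \emph{unweighted} quantity $|A\cap gA|$. Your intermediate claim can in fact be recovered this way: writing $N(d)=\sum_{b,b'\in B}|A\cap(u_{-b'}u^*_d u_b)A|$ and using the power-mean inequality gives $N(d)^5\le|B|^8\sum_{b,b'}|A\cap g_{b,b',d}A|^5$; since $(b,b',d)\mapsto u_{-b'}u^*_d u_b$ is injective for $d\neq 0$, summing over $d$ gives at most $|B|^8\sum_{g}|A\cap gA|^5\lesssim|A|^7|B|^8$. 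But this is the paper's argument in disguise --- the essential point is that Theorem~\ref{t:hyp_inc} must be applied with $A\times A$ as the point set, not with the weighted $(A{+}B)\times(A{+}B)$.
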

\begin{proof} 
    Let 
    $\sigma$ be the number of the solutions to equation \eqref{f:cont2}. In terms of the actions our equation is (we redefine $A,B,C$ to keep the general scheme of the proof) 
\[
    u_c w u_b a = u_{c'} w u_{b'} a' 
\]
    or, in other words, 
\[
    u_{-b'} u^*_{c'-c} u_b a = a' \,.
\]
    It means that 
    $$
        \sigma = \sum_{g} r_{GG^{-1}} (g) \sum_{x\in A} A(gx) \,, 
    $$
    where $G=\{ u_{-b'} u^*_{c'}\}_{b'\in B, c'\in C}$, $|G| = |B||C|$.
    Using the H\"older inequality, we get
\[
    \sigma^5 \le \sum_{g\in \SL_2 (\F_p)} \left| \sum_{x\in A} A(gx) \right|^5 \cdot \E(G) |G|^6 \,. 
\]
    Clearly, $\E(G) = |B|^2 \E^{+}(C)$ (or consult formula \eqref{f:products_eq}) and applying  Theorem \ref{t:hyp_inc},  as well as the fact that any M\"obius  transformation can only have at most $|A|$ incidences with the set $A\times A$, we get 
\[
 \sigma^5 \ll |A|^7 |B|^8 |C|^6 \E^{+}(C) \log |A| 
\]
    as required. 
    Using the trivial bound $\E^{+} (C) \le |C|^3$ and the Cauchy--Schwarz inequality, we obtain \eqref{f:cont2'}.  
This completes the proof.
$\hfill\Box$
\end{proof}

\bigskip

\bigskip

Having a  prime number $p\equiv 3 \pmod 4$, we take $i^2=-1$, $i\in \F_{p^2} \setminus \F_p$ and write $\F_{p^2} = \F_q = \F_p [i]$. Any $z\in \F_q$ can be written as $z=x+iy$ with $x,y\in \F_p$ and we identify $\F_q$ with $\F_p \times \F_p$. 
Theorem \ref{t:Z[i]} below is a result on incidences in $\F_p [i]$. Of course, asymptotic formula \eqref{f:Z[i]_as} gives a non--trivial result for double Kloosterman sums  \cite{s_Kloosterman} and we leave this deduction to the interested reader.

\begin{theorem}
    Let $p\equiv 3 \pmod 4$ be a prime number, $\mathcal{A}, \mathcal{B}, \mathcal{C}, \mathcal{D} \subseteq \F^2_p$ be sets and  $0 \neq (\a,\beta) \in \F_p \times \F_p$. 
    Then the system of the equations 
\begin{equation}\label{f:Z[i]_1}
    (a_1 + b_1) (c_1+d_1) - (a_2+b_2)(c_2+d_2) = \a\,,
        \quad 
    (a_2 + b_2) (c_1+d_1) + (a_1+b_1)(c_2+d_2) = \beta\,,
\end{equation}
where  $(a_1,a_2) \in \mathcal{A}$, $(b_1,b_2) \in \mathcal{B}$, $(c_1,c_2) \in \mathcal{C}$, $(d_1,d_2) \in \mathcal{D}$ has 
\begin{equation}\label{f:Z[i]_as}
    \frac{|\mathcal{A}||\mathcal{B}||\mathcal{C}||\mathcal{D}|}{p^2} + O\left( \sqrt{|\mathcal{A}||\mathcal{C}|} (|\mathcal{B}||\mathcal{D}|)^{1-\delta (\eps)} \right)\,, \quad  
\delta (\eps) > 0 
\end{equation}
solutions, provided 
$|\mathcal{B}||\mathcal{D}| \ge (|\mathcal{A}||\mathcal{C}|)^\eps$. 
In particular, if $|A+B|\le K|A|$, $|B| \ge |A|^\eps$, then the number of the solutions to the system of the equations   
\begin{equation}\label{f:Z[i]_2}
    xy-zw=\a\,, \quad xw+yz = \beta\,, \quad x,y,z,w\in A
\end{equation}
    is at most $\frac{K^4|A|^4}{p^2} + O(|A|^2 |B|^{-\d (\eps)})$.\\
    
    Further let $\delta \in (0,2]$, 
    $N\ge 1$ be a sufficiently large integer, $N\le p^{c\delta}$ for an absolute constant $c>0$, 
    $\mathcal{A},\mathcal{B} \subseteq \F^2_p$ be sets and $g\in \SL_2 (\F_p)$ be a non--linear map. 
    Suppose that $\mathcal{S}$ is a set, 
    $\mathcal{S} \subseteq [N]^2 \times [N]^2$, $|\mathcal{S}| \ge N^{2+\delta}$.
    Then there is 
    a
    constant $\kappa = \kappa (\d) >0$ such that 
\begin{equation}\label{f:Z[i]_3}
    |\{ g(\a+a) = \beta+b ~:~ (\a,\beta) \in \mathcal{S},\,a\in \mathcal{A},\, b\in \mathcal{B} \}|
    - \frac{|\mathcal{S}||\mathcal{A}||\mathcal{B}|}{p}
    \ll_g \sqrt{|\mathcal{A}||\mathcal{B}|} |\mathcal{S}|^{1-\kappa} \,.    
\end{equation}
\label{t:Z[i]}
\end{theorem}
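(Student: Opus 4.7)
The plan is to establish \eqref{f:Z[i]_3} first, since it is the genuinely new input, and to deduce the other two assertions from it. Throughout we identify $\F_p^2$ with $\F_q = \F_p[i]$ via $(x, y) \leftrightarrow x + iy$ and work inside $\SL_2(\F_q)$.

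For \eqref{f:Z[i]_3} we re-run the proof of Theorem \ref{t:BG_new} over $\F_q$, replacing the ambient lattice $\Z \subset \F_p$ by $\Z[i] \subset \F_q$. Set $H := \{u_\alpha g u_\beta : (\alpha, \beta) \in \mathcal{S}\}$ after Bruhat-reducing $g$; then elements of $(HH^{-1})^m$ unfold as in \eqref{f:prod_form} into alternating products of $u_t$ and $u^*_t$ whose shift-differences $t$ are Gaussian integers of norm at most $2N$. Lemma \ref{l:ping-pong_g} was stated over $\Z[i]$ precisely for this step: its first part gives the trivial-subgroup bound $\|r_{(HH^{-1})^l}\|_\infty \le |\mathcal{S}|^{(2l-1)\delta_*}$, and its second part supplies the continued-fraction-type uniqueness that drives the Borel case, so that the inclusion \eqref{f:inclusion_gen_Borel} and the divisor argument \eqref{tmp:17.10_1'}--\eqref{tmp:divisors} carry over to $\Z[i]$ with the Gaussian-integer divisor function $\tau_{\Z[i]}$ in place of $\tau_\Z$. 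The non-split-torus case reduces to the same non-degenerate quadratic equation in $(u, v)$, now over $\F_q$. The shift from $|S| \ge N^{1+\delta}$ in Theorem \ref{t:BG_new} to $|\mathcal{S}| \ge N^{2+\delta}$ here (with $\delta \in (0, 2]$) reflects that $[N] \times [N] \subset \Z[i]$ has $N^2$ lattice points, playing the role of $[N] \subset \Z$.

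The asymptotic \eqref{f:Z[i]_as} then follows by observing that the system \eqref{f:Z[i]_1} is simply the identity $(a+b)(c+d) = \alpha + i\beta =: \gamma$ in $\F_q$. Taking $g : x \mapsto \gamma/x$ as an element of $\SL_2(\F_q) \setminus \mathbf{B}$ (after determinant normalization; passing to $\mathrm{PGL}_2(\F_q)$ if $\gamma$ is not a square), the equation becomes $g(a+b) = c+d$. Treating $(b, d)$ as the shift pair and $(a, c)$ as the free variables, set $\mathcal{S} := \mathcal{B} \times \mathcal{D}$, dyadically decompose $\mathcal{B}$ and $\mathcal{D}$ into translates of $[N] \times [N]$-squares with $N \le p^{c\delta}$ chosen in terms of $\eps$, apply \eqref{f:Z[i]_3} to each tile-pair satisfying $|\mathcal{B}_j||\mathcal{D}_k| \ge N^{2+\delta}$, and absorb the remaining sparse tile-pairs into the error using the hypothesis $|\mathcal{B}||\mathcal{D}| \ge (|\mathcal{A}||\mathcal{C}|)^\eps$. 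The claim on \eqref{f:Z[i]_2} is then deduced from \eqref{f:Z[i]_as}: identifying $A \subset \F_p$ with $A \times A \subset \F_q$ rewrites \eqref{f:Z[i]_2} as $(x+iz)(y+iw) = \alpha + i\beta$ with all four factors in $A$, and an application of \eqref{f:Z[i]_as} with $\mathcal{A}, \mathcal{C}$ and $\mathcal{B}, \mathcal{D}$ built from $A$ and $B$ --- combined with the Pl\"unnecke--Ruzsa inequality $|kA + lB| \le K^{k+l}|A|$ to control sum-set inflation --- yields the main term $K^4|A|^4/p^2$ and the claimed error $O(|A|^2 |B|^{-\delta(\eps)})$.

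The main obstacle is \eqref{f:Z[i]_3}: while the trivial-subgroup and non-split-torus arguments transfer cleanly from $\Z$ to $\Z[i]$, the Borel-subgroup analysis requires verifying that the Gaussian-integer divisor bound $\tau_{\Z[i]}(n) \ll N^{o_s(1)}$ still yields \eqref{tmp:03.09_1}, and, separately, controlling the coset mass on the maximal subgroup $\SL_2(\F_p) < \SL_2(\F_q)$ that has no counterpart in the $\F_p$ setting. The latter can be handled by an additional Cauchy--Schwarz exploiting that membership in $\SL_2(\F_p)$ imposes a codimension-four condition on the Gaussian-integer shift tuples, which is generically forbidden for a set $\mathcal{S}$ of size $\ge N^{2+\delta}$.
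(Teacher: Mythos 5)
Your treatment of \eqref{f:Z[i]_3} is essentially the paper's: one reruns the proof of Theorem \ref{t:BG_new} inside $\SL_2(\F_q)$, $q=p^2$, lifting the products to $\SL_2(\Z[i])$, invoking Lemma \ref{l:ping-pong_g} over $\Z[i]$ and measuring matrix norms with complex absolute values; your remark that the subfield subgroup $\SL_2(\F_p)<\SL_2(\F_q)$ needs separate non--concentration is a fair point which the paper passes over in silence, and the Gaussian divisor function indeed replaces the rational one. The trouble is the architecture of the rest of your argument: you try to deduce \eqref{f:Z[i]_as} from \eqref{f:Z[i]_3} by tiling, and this cannot work. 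Estimate \eqref{f:Z[i]_3} only applies to shift sets contained in a product of boxes of side $N\le p^{c\delta}$ and of density $|\mathcal{S}|\ge N^{2+\delta}$ inside that product, whereas in \eqref{f:Z[i]_as} the sets $\mathcal{B},\mathcal{D}\subseteq\F_p^2$ are completely arbitrary. Take $\mathcal{B}$ (or $\mathcal{D}$) to be a well--spread set, say $p$ points whose pairwise coordinate differences exceed $N$: then every translate of $[N]\times[N]$ contains $O(1)$ of its points, so $|\mathcal{B}_j||\mathcal{D}_k|\le O(N^2)<N^{2+\delta}$ for every tile pair, no tile pair is ``dense'', and the whole count sits in the part you propose to ``absorb''; the hypothesis $|\mathcal{B}||\mathcal{D}|\ge(|\mathcal{A}||\mathcal{C}|)^{\eps}$ gives no control whatsoever over that contribution. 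Even when dense tile pairs do exist, summing the errors $\sqrt{|\mathcal{A}||\mathcal{C}|}\,(|\mathcal{B}_j||\mathcal{D}_k|)^{1-\kappa}$ over the up to $(p^2/N^2)^2$ tile pairs inflates the bound by a positive power of $p^2/N^2$ (concavity of $t\mapsto t^{1-\kappa}$), destroying the power saving in \eqref{f:Z[i]_as}.

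The paper's route is the reverse of yours and does not pass through \eqref{f:Z[i]_3} at all: after identifying \eqref{f:Z[i]_1} with the single equation $(a+b)(c+d)=\lambda$, $\lambda=\a+i\beta$, in $\F_q$, it obtains \eqref{f:Z[i]_as} by running the bilinear (Kloosterman--sum type) argument of \cite{NG_S}, \cite{s_Kloosterman}, \cite{s_Chevalley} verbatim over $\SL_2(\F_q)$; there the non--concentration estimates are proved for arbitrary sets by arithmetic in $\F_q$ rather than by lifting to $\Z[i]$, and the only genuinely new ingredient is a Helfgott--type growth/expansion statement for $\SL_2(\F_q)$, supplied by Dinai \cite{Dinai} (together with quasirandomness of $\SL_2(\F_q)$). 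For \eqref{f:Z[i]_2} your reduction is in the right spirit, but no Pl\"unnecke--Ruzsa is needed: the paper applies \eqref{f:Z[i]_as} with $\mathcal{A}=\mathcal{C}=(A+B)\times(A+B)$ and $\mathcal{B}=\mathcal{D}=-B\times B$, so that each solution of \eqref{f:Z[i]_2} is counted with weight $|B|^4$, and the assumed bound $|A+B|\le K|A|$ alone yields $K^4|A|^4/p^2+O(|A|^2|B|^{-\d(\eps)})$. So: part three of your plan is sound and matches the paper, but the derivation of \eqref{f:Z[i]_as} (and hence of \eqref{f:Z[i]_2}) as stated has a genuine gap and needs to be replaced by the bilinear argument over $\F_q$.
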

\begin{proof} 
    As we said before any $z\in \F_q$ can be written as $z=x+iy$ with $x,y\in \F_p$ and thus we obtain four sets $A,B,C,D \subseteq \F_q$, which correspond to $\mathcal{A}, \mathcal{B}, \mathcal{C}, \mathcal{D} \subseteq \F^2_p$. Let $\lambda = \a +i\beta \in \F_q$.
    Then it is easy to check that the equation
\[
    (a+b)(c+d) = \lambda \,, \quad \quad 
    a\in A,\, b\in B,\, c\in C,\, d\in D
\]
    coincides with \eqref{f:Z[i]_1}.
    Again, we can use the same argument 
    as in \cite{NG_S}, \cite{s_Kloosterman}, \cite{s_Chevalley} to obtain asymptotic formula \eqref{f:Z[i]_as} for the system \eqref{f:Z[i]_1}. The only thing we need to check that an analogue of the Helfgott growth result \cite{H} takes place for $\SL_2 (\F_q)$ (it is well--known that $\SL_2 (\F_q)$ is a quasi--random group for any $q \le p^{O(1)}$) but it was proved in \cite{Dinai}. 
    Further to obtain \eqref{f:Z[i]_2} we apply \eqref{f:Z[i]_1} with $\mathcal{A}= \mathcal{C} = (A+B)\times (A+B)$ and $\mathcal{B}=\mathcal{D} = - B\times B$.
    Then each solution of \eqref{f:Z[i]_1} is counted with the weight $|B|^4$ and the result follows.

    Finally, to get \eqref{f:Z[i]_3} we apply the same arguments as in the proof of Theorem \ref{t:BG_new}, using the ping--pong Lemma \ref{l:ping-pong_g} for $\Z[i]$ instead of $\Z$. Obviously, to calculate the norm of matrices we should use absolute values in $\C$ but not in $\mathbb{R}$. 
This completes the proof.
$\hfill\Box$
\end{proof}

\begin{remark}
    For simplicity, we have considered in \eqref{f:Z[i]_1}, \eqref{f:Z[i]_2} the transformation $gx = -1/x$. Of course, the same result takes place in general case (now $g \in \SL_2 (\F_q)$ is an arbitrary  non--linear transform) as in Theorems \ref{t:BG'}, \ref{t:BG_new}. 
\end{remark}

\section{On intersection of additive shifts of multiplicative subgroups}
\label{sec:shifts}

We begin with 
deriving some further consequences of Theorem \ref{t:BG'_intr}. 
Three parts of Theorem  \ref{t:shifts} below have the same spirit but there are some variations in parameters: we can take one or several shifts, we can control our shifts or not and, finally, there are several 
upper bounds for 
considered 
intersections of different quality. 
Also, let us remark 
that  in particular, inequality \eqref{f:shifts0} (with $n=1$) shows that for any $s\neq 0$ the set $\{1,u_{\pm s}, u^g_{\pm s}\}$ forms an expander in $\SL_2 (\F_p)$. 
Of course, Theorem 1 of \cite{BG} says that for any subset $S\subset \SL_2 (\Z)$ the Cayley graph $\Cay(\SL_2(\F_p), S)$ is an expander iff the subgroup $\langle S \rangle$ is non--elementary (that is,  $\langle S \rangle$  does not contain a solvable subgroup of finite index) and thus we can expect some properties of expansions of the set $\{1,u_{\pm s}, u^g_{\pm s}\}$. Nevertheless, our Theorem \ref{t:shifts} is a more delicate result, e.g., in the first part of this theorem the constant $c(\kappa)>0$ does not depend on $s$ (as \cite[Theorem 1]{BG} guarantees).

%
%
%
Recall that we write $a^g$ for $g^{-1}ag$ and let by definition $a^0:=a$. 
Below in this Section let $\kappa>0$ be the absolute constant 
from Theorem \ref{t:BG'_intr} or from  Theorem 
\ref{t:BG_new_intr}.

\begin{theorem}
    Let $g\in \SL_2 (\F_p)$ be a non--linear map. 
    Also, let $\kappa>0$ be the absolute constant from 
    Theorem \ref{t:BG'_intr}. 
    Then for any 
    $X\subseteq \F_p$,  and an integer parameter $N\ge 1$ the following holds:\\ 
$1)~$ 
    Suppose that 
$|X| \le p N^{-\kappa}$. 
    Then there is $s\in 2\cdot [N]$ such that 
\begin{equation}\label{f:shifts3}
    |(X+s) \cap g(X+s)| \le |X| \cdot N^{-\kappa} \,. 
\end{equation}
    In particular,  for any positive integer $n$ there are $s_j\in 2\cdot [N]$, $j\in [n]$ with
\begin{equation}\label{f:shifts0+}
    \left| \bigcap_{\eps_j \in \{ 0,1\}} (g^{u_{s_n}})^{\eps_n} \dots (g^{u_{s_1}})^{\eps_1} X \right|
    \le  |X| \cdot N^{-\kappa n} \,.
\end{equation}
$2)~$ If $Y:=g(X)$, 
$\eps\in (0,1/2]$ be any real number and  $|X| \le p N^{-\kappa}$, 
where 
$N\gg \eps^{1/\kappa}$,
then either 
$$
    |X\cap (X+2) \cap \dots \cap (X+2N)| \le \left(\frac12+\eps\right) |X| 
        \quad 
        \mbox{ or }
        \quad 
    |Y\cap (Y+2) \cap \dots \cap (Y+2N)| \le \left(\frac12+\eps\right) |Y| 
\,.$$
$3)~$ As above put  $Y:=g(X)$. If $|X|\le 3p/4$, then  for any $s\neq 0$  either $|X\cap (X+s)| \le (1-c(\kappa)) |X|$ or $|Y\cap (Y+s^{-1})| \le (1-c(\kappa)) |Y|$.
\\
In particular, for any positive integer $n$ the following holds 
\begin{equation}\label{f:shifts0}
    \left| \bigcap_{\eps_j \in \{ 0,1\}} u^{\eps_n g}_{s^{1-2\eps_n}_n} \dots u^{\eps_1 g}_{s^{1-2\eps_1}_1} X \right| \le (1-c(\kappa))^n |X| \,.
\end{equation}
\label{t:shifts} 
\end{theorem}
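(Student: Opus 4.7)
\emph{Part 1.} I apply Theorem \ref{t:BG'_intr} directly with $A=B=X$. For each $c\in 2\cdot[N]$, the count $|\{g(c+a)=c+b : a,b\in X\}|$ is, on reparametrisation $u=c+a$, exactly $|(X+c)\cap g^{-1}(X+c)|$; because $g$ is a bijection of $\F_p$ this equals $|(X+c) \cap g(X+c)|$. Averaging the BG asymptotic
\[
\sum_{c \in 2\cdot[N]} |(X+c)\cap g(X+c)| \;=\; \frac{|X|^2 N}{p} + O_g(|X| N^{1-\kappa})
\]
over $s\in 2\cdot[N]$ and invoking pigeonhole yields some $s$ with $|(X+s)\cap g(X+s)| \ll |X|^2/p+|X| N^{-\kappa}$; under $|X|\le pN^{-\kappa}$ both terms are $O(|X| N^{-\kappa})$, which is \eqref{f:shifts3}. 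The iterated bound \eqref{f:shifts0+} then follows by a clean induction: at step $j$ apply the first assertion to $X_{j-1}$ to obtain $s_j$ and set $X_j:=X_{j-1}\cap g^{u_{s_j}}X_{j-1}$; the conjugation identity $|(X_{j-1}+s_j)\cap g(X_{j-1}+s_j)|=|X_{j-1}\cap g^{u_{s_j}}X_{j-1}|$ keeps the recursion crisp, and unrolling produces the $2^n$-fold intersection on the left of \eqref{f:shifts0+}.

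\emph{Part 2.} Argue by contradiction, assuming $|A|>(\tfrac12+\eps)|X|$ and $|B|>(\tfrac12+\eps)|Y|$. Pass to the translates $A'=X\cap(X-2)\cap\cdots\cap(X-2N)$ and $B'=Y\cap(Y-2)\cap\cdots\cap(Y-2N)$, of the same cardinalities, for which $a\in A'\Leftrightarrow a+2k\in X$ for $k\in[0,N]$ and similarly for $B'$. The decisive structural observation is that for every $c\in 2\cdot[N]$ one has $A'+c\subseteq X$ and $g^{-1}(B'+c)\subseteq g^{-1}(Y)=X$ automatically. Applying Theorem \ref{t:BG'_intr} with $(A',B')$ gives
\[
T \;:=\; \sum_{c\in 2\cdot[N]}|(A'+c)\cap g^{-1}(B'+c)| \;=\; \frac{|A'||B'|N}{p}+O_g\!\bigl(\sqrt{|A'||B'|}\, N^{1-\kappa}\bigr).
\]
Since both summand sets sit inside $X$, inclusion--exclusion in $X$ yields $|(A'+c)\cap g^{-1}(B'+c)|\ge |A'|+|B'|-|X|>2\eps|X|$, so $T>2\eps|X| N$. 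Comparing with the upper bound forces $2\eps\ll|X|/p+N^{-\kappa}$, impossible under $|X|\le pN^{-\kappa}$ and $N$ a sufficiently large power of $\eps^{-1}$.

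\emph{Part 3, single-step dichotomy.} Suppose both $|X\cap(X+s)|>(1-c)|X|$ and $|Y\cap(Y+s^{-1})|>(1-c)|Y|$. Telescoping on the shift $s$ gives $|X\setminus(X+ks)|\le kc|X|$, and by inclusion--exclusion $|X\cap(X+s)\cap\cdots\cap(X+Ks)|\ge(1-\tfrac12K^2 c)|X|$ together with the analogue for $Y$; choosing $K\asymp c^{-1/2}$ pushes both intersections above $(\tfrac12+\tfrac14)|X|$. The dilation $x\mapsto s^{-1}x$ normalises $s$ to $1$ and conjugates $g$ to a non-linear M\"obius map $h$, so after restriction to the even sub-progression of $[K]$ the situation matches the hypothesis of Part 2 with $\eps=\tfrac14$ and $N\asymp K$. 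Balancing the required $|X|\le pN^{-\kappa}$ (which under $|X|\le 3p/4$ forces $N$ to be bounded by a $\kappa$-dependent constant) against $N\gg 4^{1/\kappa}$ and $N\asymp c^{-1/2}$ fixes an absolute $c(\kappa)>0$ below which Part 2 yields a contradiction.

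\emph{Part 3, iterated bound \eqref{f:shifts0}.} The recursion $I_n := u_{s_n}I_{n-1}\cap u_{s_n^{-1}}^gI_{n-1}$, after applying the bijection $u_{s_n}^{-1}$, rewrites as $|I_n|=|I_{n-1}\cap v_n I_{n-1}|$ with the composite letter $v_n := u_{-s_n}u_{s_n^{-1}}^g$. The chief obstacle is that single-step Part 3 applied to $I_{n-1}$ controls $|I_{n-1}\cap u_{s_n}I_{n-1}|$ or $|I_{n-1}\cap u_{s_n^{-1}}^gI_{n-1}|$, not the mixed quantity $|I_{n-1}\cap v_n I_{n-1}|$. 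The plan is to rerun the single-step reduction with $v_n$ in place of the simple shifts: because $v_n$ is a fixed non-parabolic element of $\SL_2(\F_p)$ lying outside every Borel subgroup (as read off from its Bruhat decomposition and from the ping--pong Lemma \ref{l:ping-pong_g}), the telescoping and dilation argument of the preceding paragraph applies to it verbatim and yields $|I_{n-1}\cap v_n I_{n-1}|\le(1-c(\kappa))|I_{n-1}|$ whenever $|I_{n-1}|\le 3p/4$, a condition preserved by the induction since $|I_j|$ is non-increasing. Iteration then delivers \eqref{f:shifts0}.
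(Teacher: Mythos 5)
Your Parts 1 and 2 are correct and are essentially the paper's own arguments (pigeonhole on \eqref{f:BG'_intr} plus the conjugation identity $|(X+s)\cap g(X+s)|=|X\cap g^{u_s}X|$ for Part 1; inclusion--exclusion of the two translated intersections inside one ambient set plus the BG count for Part 2). The genuine gap is in Part 3. Your single-step dichotomy is reduced to Part 2, but Part 2 (and its proof) needs $|X|\le pN^{-\kappa}$, whereas Part 3 only assumes $|X|\le 3p/4$. Your ``balancing'' is self-contradictory: when $|X|$ is close to $3p/4$ the hypothesis $|X|\le pN^{-\kappa}$ forces $N\le (4/3)^{1/\kappa}$, while you simultaneously require $N\gg 4^{1/\kappa}$, so no admissible $N$ exists. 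Nor can this be fixed by dropping the hypothesis and redoing the count with $\eps=1/4$: for dense $X$ the main term is as large as $\tfrac34 |X|N$, which swallows the lower bound $2\eps|X|N=\tfrac12|X|N$, so no contradiction arises at constant density unless the intersection proportion is pushed strictly above $|X|/p$. This is exactly why the paper does not route Part 3 through Part 2: it dilates $X$ by $s^{-1}$ and $Y=g(X)$ by $s$, writes the dilated sets as unions of at most $c|X|$ unit-step intervals (so typical length $\gg c^{-1}$), extracts intervals of a fixed length, introduces two independent shift parameters $m_1,m_2\in\mathcal{M}$ with $M\gg c^{-1}$ so that the relevant intersection proportion reaches $7/8$, and then the comparison $\tfrac78|X'|M^2\le \tfrac{M^2|X'|^2}{p}+C_*|X'|M^{2-\kappa}$ uses $|X'|\le 3p/4$ to absorb the main term and forces $M\ll_\kappa 1$, contradicting $M\gg c^{-1}$ for small $c(\kappa)$. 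You also do not address uniformity in $s$: after your dilation the BG constant for the conjugated map $h$ a priori depends on $s$; the paper's point (Remark \ref{r:lambda}, and the reason the shifts are paired as $s$ and $s^{-1}$) is that $s\cdot s^{-1}=1$ keeps the lower-left entry of $h_{s_1,s_2}$ equal to that of $g$, so the constant is independent of $s$ --- without this, $c(\kappa)$ would depend on $s$, defeating the stated uniformity.

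The iterated bound \eqref{f:shifts0} is also not established by your argument. The proposed step ``$|I_{n-1}\cap v_nI_{n-1}|\le(1-c)|I_{n-1}|$ for the fixed word $v_n=u_{-s_n}u^{g}_{s_n^{-1}}$ whenever $|I_{n-1}|\le 3p/4$'' is false as a general principle: a fixed element $v\in\SL_2(\F_p)$ whose order is $o(p)$ has all M\"obius orbits of bounded length, and a union of orbits $Z$ of size about $3p/4$ satisfies $vZ=Z$, hence $|Z\cap vZ|=|Z|$; being non-parabolic or outside a Borel does not exclude this, and nothing guarantees the particular $v_n$ avoids it. The dichotomy of Part 3 is intrinsically a statement about the pair $(X, g(X))$ under inverse-paired unipotent shifts, not a mixing statement for a single group element acting on a single dense set. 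The paper's iteration sidesteps the mixed quantity you isolate by keeping the previous set in the intersection: it applies the dichotomy to $X_{n-1}$ (legitimate since $|X_{n-1}|\le|X|\le 3p/4$), obtains a letter $w_n\in\{u_{s_n},u^{g}_{s_n^{-1}}\}$ with $|X_{n-1}\cap w_nX_{n-1}|\le(1-c(\kappa))|X_{n-1}|$, and sets $X_n=X_{n-1}\cap w_nX_{n-1}$; the resulting $2^n$-fold intersection involves subwords of the chosen letters, which is the reading the paper gives to \eqref{f:shifts0}. Your recursion $I_n=u_{s_n}I_{n-1}\cap u^{g}_{s_n^{-1}}I_{n-1}$ drops $I_{n-1}$ itself and thereby creates an obstacle that your proposed fix does not overcome.
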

\begin{proof} 
    Let us begin with $1)$ because it is just a direct application of Theorem \ref{t:BG'_intr} 
    (thanks to this result  we can even assume that $g(x)=1/x$, say). 
    Indeed,  by formula \eqref{f:BG'_intr}, our assumption $|X| \le p N^{-\kappa}$  and the Dirichlet principle we can find $s\in 2\cdot [N]$ such that 
\[
    |(X+s) \cap g(X+s)| \ll \frac{|X|^2}{p} + |X| N^{-\kappa}
    \ll |X| N^{-\kappa} 
\]
as required. Since $|(X+s) \cap g(X+s)| = |X\cap g^{u_s} X|$,  we 
get 
\eqref{f:shifts0+} via iteration. 

    To obtain $2)$ let us consider the sets $X_{*} := X\cap (X+2) \cap \dots \cap (X+2N) \subseteq X$,
    $Y_{*} := Y\cap (Y+2) \cap \dots \cap (Y+2N) \subseteq Y$ and assume that $|X_*|\ge (1/2+\eps)|X|$, $|Y_*|\ge (1/2+\eps)|Y|$.
    Since $Y=g(X)$ and $X_*-2j \subseteq X$, $Y_*-2j \subseteq Y$, $\forall j\in [N]$, it follows that $|(Y_*-2j) \cap g(X_*-2j)|\ge 2\eps |X|$ for any  $j\in [N]$.  
    Hence the equation $y_*-2j = g(x_*-2j)$, where $j\in [N]$ and $x_* \in X_*$, $y_* \in Y_*$ has at least  $2\eps N |X|$ solutions. 
    Again this contradicts formula \eqref{f:BG'_intr} of Theorem \ref{t:BG'_intr}.

    Finally, to get $3)$ we use a variation of the argument from  \cite{He} and \cite{s_CDG}.  
    Let $|X\cap (X+s_1)| > (1-c) |X|$ and $|Y\cap (Y+s_2)| > (1-c) |Y|$ for some $s_1,s_2 \neq 0$.
    Dividing and redefining the sets $\tilde{X} = X/s_1$, $\tilde{Y}=Y/s_2$ we can assume that $s_1=s_2=1$ and let $\tilde{X} =\bigsqcup_{j\in J} I_j$, where $I_j$ are some intervals with step one (and similar to the set $\tilde{Y}$).
    Write $c=c(\kappa)$ for a sufficiently small constant, which we will choose later. 
    From 
    $|\tilde{X} \cap (\tilde{X}+1)| > (1-c) |X|$, 
    $|\tilde{Y}\cap (\tilde{Y}+1)| > (1-c) |\tilde{Y}|$, we see that $|J| \le  c|X|$. 
    Put $L=|X|/|J|$ and 
let $\omega \in (0,1)$ be a small parameter, which we will choose later.
One has  $\sum_{j\in J} |I_j| =|X|$ and hence $\sum_{j ~:~ |I_j|\ge \omega L} |I_j| \ge (1-\omega) |X|$. 
Splitting $I_j$ into intervals of length exactly $L_\omega := \omega L/2$, we see that the rest is at most 
$2\omega |X|$. 
Hence we have obtained some intervals $I'_i$, $i\in I$, having lengths  $L_\omega$ and step one and such that  $\sum_{i\in I} |I'_i| \ge (1-2\omega) |X|$.
Put $X' = \bigsqcup_{i\in I} I'_j$.
Similarly, we construct $Y' \subseteq Y$, $|Y'|\ge (1-2\omega)|Y|$. 
Then 
\begin{equation}\label{tmp:15.08_2}
    |X| = |Y| = |Y\cap g(X)| = |s_2 \tilde{Y} \cap g(s_1 \tilde{X})| \le 
    |s_2 Y' \cap g(s_1 X')|
    + 4 \omega |X| \,.
\end{equation}
    It follows that  $|s_2 Y' \cap g(s_1 X')| \ge (1-4\omega) |X|$. 
    Now our task is to obtain a good upper bound for the intersection. 
Let $\mathcal{M} = [\zeta L_\omega]$, where $\zeta = 2^{-6}$. 
Thus $M:=|\mathcal{M}| \ge \zeta L_\omega/4$.
We have $|(I'_i + m) \cap I'_i| \ge (1-2\zeta) |I'_i|$ for all $m\in \mathcal{M}$ and hence $|(X'+m) \cap X'| \ge (1-2\zeta)|X'|$.
Again, similarly, we obtain $|(Y'+m) \cap Y'| \ge (1-2\zeta)|Y'|$.
Recalling  $|s_2 Y' \cap g(s_1 X')| \ge (1-4\omega) |X|$, we see 
\[
    |s_2 (Y'+m_1) \cap g(s_1 (X'+m_2))| \ge |s_2 Y' \cap g(s_1 X')| - 4\zeta |X'|
    \ge (1-4\omega - 4\zeta)|X'| \ge 7|X'|/8 \,,
\]
    where we have chosen $\omega =\zeta = 2^{-6}$.
From above, we get 
\[
\frac{7|X'|M^2}{8} 
\le 
|\{ s_1 (y'+m_1) = g(s_2 (x'+m_2)) ~:~ x' \in X',\, y'\in Y',\, m_1,m_2 \in \mathcal{M} \}| \,.
\]
Let $g=(ab|cd)$, $c\neq 0$ and $h_{s_1,s_2} =(as_2,b|s_1 s_2 c, ds_1)$. 
Then the last equation can be rewritten as $y'+m_1 = h_{s_1,s_2} (x'+m_2)$. 
Now using $s_1 s_2 \equiv 1 \pmod p$, we see that the lower left corner of $h_{s_1,s_2}$ is  $O(1)$. Thus we can apply Theorem \ref{t:BG'} with $S=[M]\times [M]$ (also, see Remark \ref{r:lambda}) and obtain 
\[
\frac{7|X'|M^2}{8} 
\le
\frac{M^2|X'|^2}{p} + C_* |X'| M^{2-\kappa} \,,
\]
where $C_* >0$ is  an absolute constant, 
and whence 
\begin{equation}\label{f:eq_J}
M^2|X'| \ll 
|X'| M^{2-\kappa} 
\,. 
\end{equation}
Here  we have used the assumption $|X'|\le |X| \le 3p/4$. 
Estimate \eqref{f:eq_J} give us a contradiction for sufficiently large $M \ge C^{1/\kappa}$, where $C>1$ is an absolute constant. 
Recall that $M\ge 2^{-8}L_\omega = 2^{-15} L$.
Since $|J| \le c|X|$ and $L=|X|/|J| \ge c^{-1}$, it follows that 
$M\ge 2^{-15} c^{-1}$. Taking $c=c(\kappa)>0$ to be a sufficiently small number, we satisfy our condition $M \ge C^{1/\kappa}$. 

To obtain \eqref{f:shifts0} we just apply 1) to derive $|X\cap u_s X| \le (1-c(\kappa))|X|$ or  
$|g(X)\cap u_{s^{-1}} g(X)| \le (1-c(\kappa))|X|$ for any $s\neq 0$. The second possibility  is equivalent to $|X\cap u^g_{s^{-1}} X| \le (1-c(\kappa))|X|$. In any case we have found a word $w \in \{ u_s, u^{g}_{s^{-1}} \}$
such that $|X\cap wX| \le (1-c(\kappa))|X|$. After that we can iterate the obtained bound.
This completes the proof.
$\hfill\Box$
\end{proof}

\begin{remark}
    Let us say a few words about  Theorem  \ref{t:shifts}. Of course in the third  part of the result 
    the restriction $|X|\le 3p/4$ can be replaced to $|X|\le (1-\eps)p$ and it will just change $c(\kappa)$ to a certain positive constant $c(\kappa,\eps) >0$. Also, it is not possible to have just one shift in the first  part of Theorem  \ref{t:shifts}, formula \eqref{f:shifts3}. In view of Lemma \ref{l:action_g} below  a counterexample is very simple. 
    Indeed, let $X=\G \cap (\G-1) - 2$ and $s=2$. 
    Then using Lemma \ref{l:action_g}, we obtain $X+2 = \G \cap (\G-1) = (X+2)^{-1}$. Finally, a similar example shows that  one cannot take $\kappa>1$ in formula \eqref{f:shifts3}.    
\end{remark}

\begin{remark}
    It is possible to consider general functions differ from  M\"obius transformation similarly 
    to paper 
    \cite{s_CDG} and obtain some analogues of Theorem  \ref{t:shifts}. 
    Nevertheless, it gives much weaker bounds for the considered intersections. 
    E.g., if a sufficiently small set $X\subset \F_p$ satisfies $X= 1 + 1/(X-1)$, then for any $\lambda\neq 0$ one has $|X\cap \lambda X| \le (1-O(\exp(-\log p/\log \log p))) |X|$, see \cite[Theorem 1]{s_CDG}.
\end{remark}

Now we apply the bounds above to multiplicative subgroups.

Let $\G$ be a multiplicative subgroup of $\F^*_p$.
Also, let $k$ be a positive integer. 
Given non--zero numbers $\a_1,\dots,\a_k \in \F^*_p$ and any $\beta_1,\dots,\beta_k \in \F_p$ 
consider a generalization of sets \eqref{f:R_s} 
\begin{equation}\label{def:G_ab}
    \G_{\bar{\a}; \bar{\beta}} = \G_{\a_1,\dots,\a_k; \beta_1,\dots,\beta_k} = (\a_1 \G +\beta_1) \cap \dots \cap (\a_k \G +\beta_k) \,,
\end{equation}
where $\bar{\a}=(\a_1,\dots,\a_k)$ and $\bar{\beta}=(\beta_1,\dots,\beta_k)$. 
In this Section we 
want to obtain new upper bounds for size of the sets $\G_{\bar{\a}; \bar{\beta}}$. 
The best current results 
on additive shifts of multiplicative subgroups are contained in \cite{sv}. They say, basically, that for any sufficiently large $\G \le \F_p^*$, $|\G| \le p^{1-c}$ one has $|\G_{\bar{\a}; \bar{\beta}}| \ll k |\G|^{1/2+\eps_k}$, where $\eps_k \to 0$ as $k\to \infty$. 
We want somehow to break the "square--root barrier"\, (i.e. $\sqrt{|\G|}$), which appears above.

\bigskip 

First of all,  let us show how a matrix $g\in {\rm GL}_2(\F_p)$ acts on sets of the form \eqref{def:G_ab}.

\begin{lemma}
    Let $g=(ab|cd) \in {\rm GL}_2(\F_p)$, $c\neq 0$, $k\ge 2$, $\a_1,\dots,\a_k \in \F^*_p$ and $\beta_1,\dots,\beta_k \in \F_p$. 
    Suppose that $\beta_1 =  g^{-1} (\infty) = -d/c$. 
    Then we have 
\begin{equation}\label{f:action_g} 
    g \G_{\a_1,\dots,\a_k; \beta_1,\dots,\beta_k} = 
    \G_{\gamma_1,\dots,\gamma_k; g(\infty), g(\beta_2),\dots,g(\beta_k)} \,,
\end{equation}
    where 
    $\gamma_1 = -\frac{\det (g)}{c^2\a_1}$ and 
    $\gamma_j = \frac{\a_j \det(g)}{\a_1 c (c\beta_j + d)}$, $j\in [k]\setminus \{1\}$.\\
\label{l:action_g}
\end{lemma}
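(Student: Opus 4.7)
The plan is a direct Möbius-transformation computation. Write $g(x)=(ax+b)/(cx+d)$. The hypothesis $\beta_1=-d/c$ places the pole of $g$ exactly at the ``center'' $\beta_1$ of the first coset, which is what makes the whole statement clean: the image $g(\a_1\G+\beta_1)$ collapses to a single coset of $\G$, and the fact that any $x\in\G_{\bar\a;\bar\beta}$ also lies in $\a_1\G+\beta_1$ will let me tame the denominator $cx+d$ in the formula for $g(x)$ when computing the images of the remaining cosets. Note that $-d/c\notin\a_1\G+\beta_1$ because $0\notin\G\subset\F_p^{*}$, so the pole of $g$ never actually touches $\G_{\bar\a;\bar\beta}$.

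For $j=1$, substituting $x=\a_1\gamma+\beta_1=\a_1\gamma-d/c$ gives $cx+d=c\a_1\gamma$ and, using $ad-bc=\det(g)$, also $ax+b=a\a_1\gamma-\det(g)/c$; hence
$$
g(x)=\frac{a}{c}-\frac{\det(g)}{c^{2}\a_1\gamma}=g(\infty)+\gamma_1\gamma^{-1},
$$
with $\gamma_1=-\det(g)/(c^{2}\a_1)$, as asserted. Closure of $\G$ under inversion then shows the image is exactly $\gamma_1\G+g(\infty)$. For $j\ge 2$, the general identity
$$
g(x)-g(\beta_j)=\frac{\det(g)\,(x-\beta_j)}{(cx+d)(c\beta_j+d)}
$$
holds for any $x$ not at the pole. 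For $x\in\G_{\bar\a;\bar\beta}$ I can write $x-\beta_j=\a_j\gamma$ with $\gamma\in\G$, and crucially, using membership in the first coset, $cx+d=c\a_1\eta$ with $\eta\in\G$. Substituting collapses the expression to $g(x)-g(\beta_j)=\gamma_j\cdot(\gamma/\eta)\in\gamma_j\G$, with exactly the $\gamma_j=\a_j\det(g)/(\a_1 c(c\beta_j+d))$ of the statement.

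These two calculations give the inclusion $g\,\G_{\bar\a;\bar\beta}\subseteq \G_{\bar\gamma;g(\infty),g(\beta_2),\ldots,g(\beta_k)}$. I would then obtain the reverse inclusion either by applying the same argument to $g^{-1}$ (whose lower-left entry is $-c\neq 0$ and whose ``first shift'' is $g(\infty)=a/c$, the pole of $g^{-1}$, so the hypothesis is again satisfied), or, more simply, by observing that $g$ is a bijection $\F_p\setminus\{-d/c\}\to\F_p\setminus\{a/c\}$, so the established one-sided inclusion combined with equality of cardinalities forces set equality. I do not anticipate any serious obstacle: the content is routine bookkeeping of a Möbius-action calculation, and the only delicate point — that the pole of $g$ never meets $\G_{\bar\a;\bar\beta}$, and $g(\infty)$ never lies in $\gamma_j\G+g(\beta_j)$ for $j\ge 2$ — is precisely what the hypothesis $\beta_1=-d/c$ together with $0\notin\G$ guarantees.
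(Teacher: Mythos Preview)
Your proposal is correct and follows essentially the same route as the paper: establish the inclusion $g\,\G_{\bar\a;\bar\beta}\subseteq \G_{\bar\gamma;g(\infty),g(\beta_2),\dots,g(\beta_k)}$ by direct computation (the paper phrases this via $t$-th powers, $t=|\G|$, rather than your M\"obius difference identity $g(x)-g(\beta_j)=\det(g)(x-\beta_j)/[(cx+d)(c\beta_j+d)]$, but the content is identical), and then obtain the reverse inclusion by running the same argument for $g^{-1}$, whose pole sits at the first shift $g(\infty)$ of the right-hand side. The paper carries out this second step explicitly, verifying that the new coefficients $\gamma'_j$ coming from $g^{-1}$ equal $\a_j$; you should plan to do that check as well.

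One small caveat: your alternative ``equality of cardinalities'' shortcut does not work as stated, because nothing proved so far gives $|\G_{\bar\a;\bar\beta}|=|\G_{\bar\gamma;\dots}|$ --- bijectivity of $g$ on $\F_p\setminus\{-d/c\}$ only tells you $|g\,\G_{\bar\a;\bar\beta}|=|\G_{\bar\a;\bar\beta}|$. What \emph{would} close the argument without the $g^{-1}$ step is to observe that your computation for $j\ge 2$ is actually an equivalence: for $x\in\a_1\G+\beta_1$ (so $cx+d\in c\a_1\G$) one has $x\in\a_j\G+\beta_j$ if and only if $g(x)\in\gamma_j\G+g(\beta_j)$, and combined with the exact equality $g(\a_1\G+\beta_1)=\gamma_1\G+g(\infty)$ from your $j=1$ step this yields both inclusions at once.
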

\begin{proof} 
    Let $|\G|=t$ and $g^{-1} = (AB|CD) = (d (-b)|(-c)a) \cdot  \det^{-1}(g)$.
    Take any $x\in g\G_{\bar{\a}; \bar{\beta}}$. 
    Then we have $g^{-1} x \in \G_{\bar{\a}; \bar{\beta}}$ and this is equivalent to 
\[
    \left( \frac{Ax+B}{Cx+D} - \beta_j \right)^t = \a^t_j,\,
    \quad \quad j\in [k]
 \]
    or, in other words, 
\[
    (dx-b-\beta_j (-cx+a))^t= \a^t_j (-cx+a)^t,\,
    \quad \quad j\in [k]
\]
    and hence 
\begin{equation}\label{tmp:14.08_1}
    \left( x-\frac{a\beta_j +b}{c\beta_j + d} \right)^t
    =
    (x-g (\beta_j))^t =  \a^t_j (-cx+a)^t (c\beta_j + d)^{-t} \,.
\end{equation}
    Let us show that the right--hand side of the last formula does not depend on $x$.
    We know that $x\in g\G_{\bar{\a}; \bar{\beta}}$ and in particular, $x \in g \G_{\a_1;\beta_1}$.
    In view of our condition $\beta_1 = -d/c$, we get
\[
    -cx+a = -c \cdot \frac{a(\a_1 \o +\beta_1)+b}{c(\a_1 \o +\beta_1)+d} + a
    =
    -c \cdot \frac{a(\a_1 \o +\beta_1)+b}{c\a_1 \o} + a
    = - \frac{a\beta_1+b}{\a_1} \o^{-1} \,,
\]
    where $\o \in \G$. 
    Hence $(-cx+a)^t = (- \frac{a\beta_1+b}{\a_1})^t$ and thus \eqref{tmp:14.08_1} can be rewritten as  
\[
    (x-g (\beta_j))^t = \left(\frac{-\a_j (a\beta_1+b)}{\a_1 (c\beta_j + d)} \right)^t 
    = \left( \frac{\a_j \det(g)}{\a_1 c (c\beta_j + d)} \right)^t 
\]
    for all $j \in [k]\setminus \{1\}$. 
    In other words,  $x\in \G_{\gamma_j,g(\beta_j)}$, $j>1$
    and, similarly, 
    $$
        g\G_{\a_1,\beta_1} = \G_{(a\beta_1+b)/c\a_1; a/c} = \G_{(a\beta_1+b)/c\a_1; g(\infty)} = \G_{-\frac{\det (g)}{c^2\a_1}; g(\infty)} \,.
    $$
    Notice that all $\gamma_j\neq 0$ because $g\in {\rm GL}_2(\F_p)$ and hence $\beta_1 = -d/c \neq -b/a$. 
    Thus we have 
    obtained 
    the inclusion
    $$
        g \G_{\a_1,\dots,\a_k; \beta_1,\dots,\beta_k} \subseteq 
    \G_{\gamma_1,\dots,\gamma_k; g(\infty), g(\beta_2),\dots,g(\beta_k)} \,.
    $$
    To get \eqref{f:action_g} apply the 
    last 
    inclusion and derive
\begin{equation}\label{tmp:15.08_1}
    |\G_{\gamma_1,\dots,\gamma_k; g(\infty), g(\beta_2),\dots,g(\beta_k)}| =     |g^{-1} \G_{\gamma_1,\dots,\gamma_k; g(\infty), g(\beta_2),\dots,g(\beta_k)}|
    \le 
    |\G_{\gamma'_1,\dots,\gamma'_k; g^{-1} (\infty), \beta_2,\dots,\beta_k}| \,,
\end{equation}
    where $\gamma'_j$ are some numbers, which we will calculate later.
    Here we have used the fact that $g(\infty) = a/c=-D/C$ and $a/c \neq -B/A$. 
    It is easy to check that 
    $$
        g^{-1} (\infty) = \frac{A}{C} = - \frac{d}{c} = \beta_1 \,,
    $$
    and thanks to $-\frac{d}{c} = \beta_1$, we get 
    $$
         \gamma'_1 = \frac{A g(\infty) + B}{C \gamma_1} =
         \frac{da/c-b}{-(a\beta_1+b)/\a_1} = \a_1 \,. 
    $$
    Thus it remains to show that $\gamma'_j = \a_j$ for all $j\in [k] \setminus \{1\}$ and then inequality \eqref{tmp:15.08_1} implies \eqref{f:action_g} because then 
    $\G_{\gamma'_1,\dots,\gamma'_k; g^{-1} (\infty), \beta_2,\dots,\beta_k} = \G_{\a_1,\dots,\a_k; \beta_1,\dots,\beta_k}$. 
    Taking $j>1$, we have
\[
    \gamma'_j = \frac{-\gamma_j (Ag(\infty)+B)}{\gamma_1 (C g(\beta_j) + D)}
    = \frac{ad/c-b}{-c \frac{a\beta_j+b}{c\beta_j+d}+a} \cdot \frac{c\a_1}{a\beta_1+b} \cdot \left(\frac{\a_j(a\beta_1+b)}{\a_1 (c\beta_j+d)} \right)
    =
    \a_j 
\]
as required. 
$\hfill\Box$
\end{proof}

\bigskip

It is easy to see that Lemma \eqref{l:action_g} implies formula  \eqref{f:R_s_inverse} from the Introduction. Another example is the following (the map below was considered in \cite{s_CDG}, say): let $g(x)=x/(x-1)$ and $X_{s_1,\dots,s_k} := (\G+1) \cap (\G+s_1) \cap \dots \cap (\G+s_k)$. In this case, we see that if $1-s_j\in \G$, $j\in [k]$, then $g(X_{s_1,\dots,s_k}) = X_{g(s_1),\dots,g(s_k)}$.

\bigskip 

\bigskip

Now we are ready to break the square--root barrier for subgroups. 

First of all, let us make a general remark. 
Let $\G < \F_p^*$ be a multiplicative subgroup and $S = \{ s_1,\dots,s_k \} \subseteq \F_p^*$ 
    be any set.
For an arbitrary  $c\in \F_p$ let us write $g_c = (01|1 (-c))$.
Take any $x\in \G_S$ and consider the subgroup $\langle u_{-x}, g_x \rangle \subset \SL_2 (\Z)$. It is easy to see that it is a  non--elementary subgroup (recall that $x\in \G_S$ and hence $x\neq 0$) and whence by \cite[Theorem 1]{BG} one has $|\G_S \cap u_{-x} \G_S \cap g_x (\G_S)| \le (1-\kappa (x) ) |\G_S|$.   
    We have $\det(g_{x}) = -1$ and $g_{x}(\infty) = 0$.
    Put $\overline{S} = S \cup \{-x\}$. 
    Using Lemma  \ref{l:action_g}, we obtain $g_{x} \G_{\overline{S}} = \G_{g_{x} \overline{S}}$ because in the notation of the lemma $\gamma_1=1$ and $\gamma_j = -1/(-s_j-x) = 1/(x+s_j) \in \G$, $j \in [|S|]$ thanks to $x\in \G_S$.
    Thus we get 
\begin{equation}\label{fc:shifts_G}
    |\G_{S, S+x, g_x (S), -x}| \le (1-\kappa (x)) |\G_S| \,.
\end{equation}
Iterating estimate  \eqref{fc:shifts_G}, we obtain Theorem \ref{t:shifts_intr} with $|T| \le 3^n |S| + O_n(1)$ but,  first of all, we can do slightly better and, secondly, we remove the dependence on $x$ in an analogue of \eqref{fc:shifts_G}.

Given $x\in \F^*_p$ write $w_x$ for $u^{w}_{x^{-1}}$. 

\begin{theorem}
    Let $\G < \F_p^*$ be a multiplicative subgroup, $\G=-\G$ and $S = \{ s_1,\dots,s_k \} \subseteq \F_q^*$
    be any set.
    Then there are numbers $1\le k_1 < k_2 < \dots < k_l = n$ 
    such that for any  elements $x_1,\dots,x_n$ with 
\[
    x_1 \in \G_{S}\,,\quad x_2 \in \G_{\bigcup_{\eps_1\in \{0,1\}} u^{\eps_1}_{x_1} S},\, \dots \,,  x_{k_1} \in \G_{\bigcup_{\eps_j\in \{0,1\}} u^{\eps_{k_1-1}}_{x_{k_1-1}} \dots u^{\eps_1}_{x_1} S}
    \,, \quad 
\]
\[
    x_{k_1+1} \in \G_{\{x_{k_1}\} \bigcup \bigcup_{\eps_j, \eta_1 \in \{0,1\}} w^{\eta_1}_{x_{k_1}} u^{\eps_{k_1-1}}_{x_{k_1-1}} \dots u^{\eps_1}_{x_1} S} \,, 
    \dots \,, 
\]
\begin{equation}\label{cond:inclusions}
    x_n \in \G_{\{x_{k_1}, \dots, x_{k_l}\} \bigcup \bigcup_{\eps_j, \eta_i \in \{0,1\}}
    u^{\eps_{x_n}}_{x_n} \dots u^{\eps_{k_l+1}}_{x_{k_l+1}} w^{\eta_l}_{x_{k_l}} \dots w^{\eta_1}_{x_{k_1}} u^{\eps_{k_1-1}}_{x_{k_1-1}} \dots u^{\eps_1}_{x_1} S}
\end{equation}
    one has 
\begin{equation}\label{f:shifts1_G} 
|\G_{\bigcup_{\eps_j, \eta_i \in \{0,1\}}
    u^{\eps_{x_n}}_{x_n} \dots u^{\eps_{k_l+1}}_{x_{k_l+1}} w^{\eta_l}_{x_{k_l}} \dots w^{\eta_1}_{x_{k_1}} u^{\eps_{k_1-1}}_{x_{k_1-1}} \dots u^{\eps_1}_{x_1} S}|
    \le 
        (1-\kappa)^n |\G_{S}| \,.
\end{equation}
    provided the sets from \eqref{cond:inclusions} are non--empty.\\
    Now let 
    $N$ be a positive integer, 
    $|\G_S| \le p N^{-\kappa}$.
    Then there is a vector $\vec{\a} = \vec{\a}(S)$ and there are at least  $N-N^{1-\kappa/2}$ numbers  $z\in 2\cdot [N]$ such that
\begin{equation}\label{f:shifts2_G} 
    |\G_{\vec{\a};\, z,S,g^{u_z}(S \cup 2\cdot [N] \setminus \{z\})}| \le |\G_{S}| N^{-\kappa/2} \,.
\end{equation}
\label{t:shifts_G} 
\end{theorem}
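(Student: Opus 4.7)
The strategy is to iterate Theorem \ref{t:shifts}, part 3 (with the Weyl element $w$ playing the role of the non-linear map $g$), while using Lemma \ref{l:action_g} to verify at each step that the shrinking intersection $X_j$ remains expressible in the canonical form $\G_{T_j}$ for some explicit set $T_j$ of parameters. The hypothesis $\G = -\G$ is essential because it ensures that $w$ (acting on the projective line as $y \mapsto -1/y$) preserves $\G$; this is what allows the scheme that gave the preliminary bound \eqref{fc:shifts_G} to be iterated cleanly.

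Inductively, set $X_0 := \G_S$ and at the $j$-th step pick $x_{j+1} \in X_j = \G_{T_j}$ (non-empty by the hypothesis on \eqref{cond:inclusions}). Applying the dichotomy of Theorem \ref{t:shifts}, part 3, with $s = x_{j+1}$, either
\begin{equation*}
|X_j \cap u_{x_{j+1}} X_j| \le (1-c(\kappa))|X_j|,
\end{equation*}
or the analogous bound holds with $w_{x_{j+1}} X_j$ in place of $u_{x_{j+1}} X_j$. In the first case, $X_{j+1} := X_j \cap u_{x_{j+1}} X_j = \G_{T_j \cup (T_j + x_{j+1})}$, which just doubles the list of shifts. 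In the second case, one applies Lemma \ref{l:action_g} to the matrix $w_{x_{j+1}} = u^w_{x_{j+1}^{-1}}$: since $w_{x_{j+1}}^{-1}(\infty) = x_{j+1}$, the hypothesis $\beta_1 = g^{-1}(\infty)$ of the lemma is arranged by first adjoining $\{x_{j+1}\}$ to the shift list $T_j$ (legitimate since $x_{j+1} \in X_j$ means $x_{j+1}$ is already contained in each $\alpha \G + \beta$ defining $X_j$, so adjoining it does not change $X_j$). The lemma then provides an explicit description of $w_{x_{j+1}} X_j$ as another set of the form $\G_{T'_j}$, and hence of $X_{j+1}$. The integers $k_1 < k_2 < \ldots < k_l = n$ record precisely the steps at which the second alternative (the $w$-shift) is invoked; at all other steps the $u$-shift alternative is used. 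After $n$ iterations we obtain $|X_n| \le (1 - c(\kappa))^n |\G_S|$, which gives \eqref{f:shifts1_G} after relabelling $\kappa := c(\kappa)$.

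For the second assertion \eqref{f:shifts2_G}, the argument is averaging over $z \in 2\cdot [N]$ using Theorem \ref{t:BG'_intr}. For a fixed auxiliary vector $\vec{\a}$ (determined by the matrix describing how $g^{u_z}$ transforms shifts of $\G$), summing the intersection sizes $|\G_{\vec{\a};\, z, S,\, g^{u_z}(S \cup 2\cdot [N] \setminus \{z\})}|$ over $z$ reduces to counting solutions of an equation of the form $g(c+a) = c+b$ with $c \in 2\cdot [N]$, to which the asymptotic formula \eqref{f:BG'_intr} applies directly. The hypothesis $|\G_S| \le p N^{-\kappa}$ ensures that the main term $|A||B|N/p$ is dominated by the error term $\sqrt{|A||B|}\, N^{1-\kappa}$; Markov's inequality then converts the average bound into a pointwise bound for at least $N - N^{1-\kappa/2}$ values of $z \in 2\cdot [N]$.

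The main obstacle is the combinatorial bookkeeping in the first part: one must describe $T_{j+1}$ explicitly in terms of $T_j$ after each $u$- or $w$-step and verify that the resulting chain of descriptions matches the interleaved expression in \eqref{cond:inclusions}. In particular, one must track how the conjugations $u^{\eps}_{x_i}$ and $w^{\eta}_{x_i}$ accumulate on the original shift set $S$, and confirm that the non-emptiness conditions at each stage are exactly those stated. Once this bookkeeping is in place, the quantitative estimate \eqref{f:shifts1_G} follows immediately from the iteration.
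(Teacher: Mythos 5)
Your proposal takes the same route as the paper: part 3 of Theorem \ref{t:shifts} gives the $u$--shift/$w$--shift dichotomy at each step, Lemma \ref{l:action_g} (together with $\G=-\G$) keeps the iterates in the form $\G_{T}$, the indices $k_1<\dots<k_l$ record the steps where the $w$--alternative is invoked, and the second claim is exactly the averaging of Theorem \ref{t:BG'_intr} with $g(x)=1/x$ over $z\in 2\cdot[N]$ followed by Markov's inequality, with Lemma \ref{l:action_g} identifying the resulting intersection as a set of the form $\G_{\vec\a;\dots}$; this is what the paper does.

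One correction to your justification in the $w$--step: the parenthetical claim that adjoining $x_{j+1}$ to the shift list ``does not change $X_j$'' is false --- adjoining the shift $x_{j+1}$ means intersecting further with $\G-x_{j+1}$, which in general properly shrinks $X_j$ (membership of the \emph{point} $x_{j+1}$ in $X_j$ says nothing about the \emph{set} $\G-x_{j+1}$ containing $X_j$). The true role of $x_{j+1}\in \G_{T_j}$ is different: it guarantees, via the explicit formulas of Lemma \ref{l:action_g}, that the new multiplicative coefficients $\gamma_1=-x_{j+1}^2$ and $\gamma_j=x_{j+1}^2/(s_j+x_{j+1})$ lie in $\G$ (here $\G=-\G$ enters), so they can be absorbed and the image is again a plain shift--intersection, exactly as in the computation behind \eqref{fc:shifts_G}. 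The slip is harmless for the argument: since only an upper bound is needed and $\G_{T_j\cup\{x_{j+1}\}}\cap w_{x_{j+1}}\G_{T_j\cup\{x_{j+1}\}}$ is a subset of $X_j\cap w_{x_{j+1}}X_j$, the chain $|X_{j+1}|\le(1-c(\kappa))|X_j|$ survives --- and this is precisely why the index sets in \eqref{cond:inclusions} and \eqref{f:shifts1_G} carry the extra elements $\{x_{k_1},\dots,x_{k_l}\}$, matching the paper's passage to $\overline{T}=T\cup\{x_{k_1}\}$. With that repair, and the bookkeeping you already flag, the proposal is correct.
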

\begin{proof} 
    We use the third part of Theorem   \ref{t:shifts} with $X=\G_{S}$ and $s=x_1 \in \G_S$. If $|\G_S \cap (\G_S - x_1)|\le (1-\kappa)|\G_S|$, then apply  Theorem   \ref{t:shifts}  with $X=\G_S \cap (\G_S - x_1) = \G_{S,S+x_1}$ and $s=x_2$. And so on. 
    Finally, we find $Z=\G_{\cup_{\eps_j\in \{0,1\}} u^{\eps_{k_1-1}}_{x_{k_1-1}} \dots u^{\eps_1}_{x_1} S} :=\G_{T}$ such that for a certain  $x_{k_1}\in \G_T$ one has $|Z\cap (Z-x_{k_1})|>(1-\kappa)|Z|$. 
    In view of Theorem   \ref{t:shifts},
    we obtain 
    $|Z\cap w_{x_{k_1}}Z|\le (1-\kappa)|Z|$. 
    We have $w_{x_{k_1}} = (1 0|x^{-1}_{k_1} 1)$ and hence 
    $\det(w_{x_{k_1}}) = 1$ and $g_{x_1}(\infty) = x_{k_1}$.
    Put $\overline{T} = T\cup \{x_1\}$.
    Using Lemma \ref{l:action_g} 
    we obtain $w_{x_{k_1}} \G_{\overline{T}} = \G_{w_{x_{k_1}} \overline{T}}$ because in the notation of the lemma $\gamma_1=-x^2_{k_1} \in -\G = \G$ and $\gamma_j = x^2_{k_1}/(s_j+x_{k_1}) \in \G$, $j \in [|T|]$ 
     thanks to  $x_{k_1}\in \G_T$.
    Hence $\G_{\overline{T}} \cap w_{x_{k_1}} \G_{\overline{T}} = \G_{\{x_{k_1}\} \bigcup \bigcup_{\eps_j, \eta_1 \in\{0,1\}}  w^{\eta_1}_{x_{k_1}} u^{\eps_{k_1-1}}_{x_{k_1-1}} \dots u^{\eps_1}_{x_1} S}$. After that we iterate our procedure and derive \eqref{f:shifts1_G}.


    Finally, to get \eqref{f:shifts2_G}  we basically use the first  part of Theorem  \ref{t:shifts} or, in other words, Theorem \ref{t:BG'_intr} with $X=\G_S$ and $gx = 1/x$.
    In view of estimate \eqref{f:BG'_intr} of Theorem \ref{t:BG'_intr} 
    and 
    the average argument there are at least  $N-N^{1-\kappa/2}$ numbers $z \in 2\cdot [N]$ such  that 
\[
    |\G_S \cap g^{u_{z}} \G_S| = |\G_{S+z} \cap \G_{(S+z)^{-1}}| \le |\G_S|\cdot N^{-\kappa/2} \,.
\]
    One can check that $g^{u_{z}} = (-z (1-z^2)|1 z)$ and hence $g(\infty) = -z$. 
    Applying Lemma \ref{l:action_g} to 
    $\G_{S,2\cdot [N]}$, we see that
    $g^{u_{z}} \G_{S \cup 2\cdot [N]} \subseteq  \G_{\vec{\a};\, z, g^{u_z} (S \cup 2\cdot [N] \setminus \{z\})}$ with a certain $\vec{\a}=\vec{\a}(S)$. 
This completes the proof.
$\hfill\Box$
\end{proof}

\begin{remark}
    To 
    specify the set 
    $T=T(S)$, $S\subseteq T$ more concretely such that $|\G_{\vec{\a}(S);T}| \le (1-\kappa)^n |\G_S|$  
    say, in \eqref{f:shifts2_G} 
    one can write in the left--hand side of \eqref{f:shifts2_G} the following set:\\ $\G_{2\cdot [N],\cup_{z\in 2\cdot [N]} g^{u_z}(S \cup 2\cdot [N])}$. 
\end{remark}

\section{Appendix}
\label{sec:appendix}

The main aim of this Section is to prove bound \eqref{f:Burgess}
using an almost purely combinatorial method. 

\begin{theorem}
    Let $p$ be a prime number.
    Then 
\[
    d(p) \ll p^{1/4} \log p \,.
\]
\label{t:main_int}
\end{theorem}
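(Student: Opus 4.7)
The plan is to combine the intersection bound \eqref{f:R_s} with a Burgess-style multiplicative averaging, in a way that avoids any direct use of Weil's bound on incomplete character sums (so that \eqref{f:R_s} is the only analytic input). I would proceed by contradiction: assume that $d(p) > D$ with $D = C\, p^{1/4}\log p$ for a large constant $C$, and fix $a$ with $a+1,a+2,\dots,a+D$ all lying in the non-residues $\F_p^*\setminus R$.

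First, I would introduce the Burgess-type shift. Choose parameters $M$ and $H$ with $2MH < D$. Then for every $m\in [1,M]$ and every $h\in [1,H]$, the shifted interval $[a+mh+1,\,a+mh+D-2MH]$ still lies inside the original run and hence inside $\F_p^*\setminus R$. Making the substitution $x=m(y+h)+a$ turns each of these interval-sums of $\chi$ into a shifted character sum over $y\in [1,D']$ (with $D'\asymp D$), so that for every $m\in R\cap[1,M]$ and every $h\in[1,H]$ one has $\sum_{y\in[1,D']}\chi\bigl(y+h+a/m\bigr)=-D'$. This produces a genuinely large lower bound for the $L^2$ mass of the sum $S(y):=\sum_{m\in R\cap[1,M],\,h\in[1,H]}\chi(y+mh+a/1)$, suitably normalised.

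Second, I would upper-bound a high moment $\sum_{y\in\F_p}|S(y)|^{2k}$ by reducing it to the intersection bound. Expanding the moment yields a sum indexed by $2k$-tuples $(m_i,h_i),(m'_j,h'_j)$, and the inner sum over $y$ counts exactly the $y$ for which a product $\prod_i(y+m_ih_i)\prod_j(y+m'_jh'_j)$ is a square. For fixed distinct shifts $s_1,\dots,s_{2k}$ produced by the values $m_ih_i$ and $m'_jh'_j$, this is precisely $|R_{s_1,\dots,s_{2k}}|$ up to lower-order terms, and \eqref{f:R_s} provides the uniform bound $p/2^{2k+1}+2k\sqrt{p}$. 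Summing over all tuples gives
\[
\sum_{y\in\F_p}|S(y)|^{2k}\ll (MH)^{2k}\Bigl(\frac{p}{2^{2k+1}}+2k\sqrt{p}\Bigr).
\]

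Third, comparing this upper bound with the lower bound $|S(y)|\gg D'$ valid on a large set of $y$ (the union of the shifted intervals), and optimising $k\asymp \log p$ together with the two free parameters $M,H\asymp p^{\alpha}$, the two estimates can only be compatible if $D\ll p^{1/4}\log p$. Choosing $k\asymp\log p$ kills the main term $p/2^{2k+1}$ and leaves the error $2k\sqrt{p}$ times $(MH)^{2k}$; the resulting inequality is precisely of Burgess type and forces $D\le C\, p^{1/4}\log p$, contradicting the assumption.

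The main obstacle I expect is the second step, namely the bookkeeping that reduces the high moment of $S$ to a clean sum of intersection sizes $|R_{s_1,\dots,s_{2k}}|$. Getting the $p^{1/4}$ (rather than the naive $p^{1/2}\log p$ that a direct application of \eqref{f:R_s} to a single run would yield) requires that the averaging over the multiplicative parameter $m\in R\cap[1,M]$ and the additive parameter $h\in[1,H]$ genuinely contribute a factor $(MH)^{2k}$ on both sides, so that the $p^{1/4}$ gain arises from the interplay between the additive moment and the multiplicative scaling. The delicate point is to ensure, after collapsing repeated shifts and treating degenerate configurations, that the dominant contribution comes precisely from the $2k\sqrt{p}$ term in \eqref{f:R_s}.
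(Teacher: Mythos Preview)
Your proposal follows the classical Burgess moment-method route, which is quite different from the paper's argument and is in fact exactly the ``classical approach and its variations'' that the paper announces it will avoid. The paper proceeds without character sums or high moments at all: starting from a run $P_a=a+\{0,1,\dots,d(p)-1\}\subseteq R$ of consecutive residues, it observes that the ratio set $P_0^{1/2}/P_a^{1/2}$ lies inside $R-1$ (since $\tfrac{x}{a+y}+1=\tfrac{a+x+y}{a+y}\in R/R=R$), and more generally, since $j\cdot P_0^{1/2k}\subseteq P_0^{1/2}$ for every $j\in[k]$, the single set $P_0^{1/2k}/P_a^{1/2}$ sits inside the intersection $(R-1)\cap(2^{-1}R-2^{-1})\cap\cdots\cap(k^{-1}R-k^{-1})$. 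An elementary counting lemma (Lemma~\ref{l:coprime_ratio}) shows that this ratio set has size $\gg d(p)^2/k$, while \eqref{f:R_s} bounds the intersection by $p/2^k+k\sqrt p$. Taking $k\sim\log p$ finishes the proof in one line: $d(p)^2/\log p\ll\sqrt p\,\log p$.

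Compared with your plan, the paper's argument is much shorter and entirely bypasses the high-moment bookkeeping that you yourself flag as the main obstacle. Your second step also contains a slip worth noting: after expanding $\sum_y|S(y)|^{2k}$, the inner sum over $y$ is the \emph{signed} character sum $\sum_y\prod_i\chi(y+s_i)$, not the count of $y$ for which the product is a square, so it is not literally $|R_{s_1,\dots,s_{2k}}|$. One can recover the needed bound from \eqref{f:R_s} by writing $\chi=2\cdot\mathbf 1_R-1$ and expanding via inclusion--exclusion over subsets of the shifts, but that reduction (with its $2^{2k}$ terms) is missing from your outline and is precisely where the ``bookkeeping'' you worry about lives. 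Once patched, your approach is essentially Burgess's original proof reorganised so that \eqref{f:R_s} stands in for the direct appeal to Weil; it can be made to work, but it does not deliver the advertised novelty of a combinatorial proof, whereas the paper's ratio-set inclusion does.
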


To obtain Theorem \ref{t:main_int} we need lower bound for size of ratio sets of two intervals in $\F_p$ and it is crucial for us do not loose any power of logarithms. 
Our argument follows \cite{Burgess2} and \cite{CG}.

\begin{lemma}
    Let $H,H_*,a$ be positive integers, $H_* \le H$, $a+H<p$ and  
    $16 H^2_* H < p$. 
    Then for all sufficiently large $H_*$ the following holds 
\[
    \left| \left\{ \frac{y}{a+x} \in \F_p ~:~ (x,y) \in [H] \times [H_*] \right\} \right| \gg H_* H \,.
\]
\label{l:coprime_ratio}
\end{lemma}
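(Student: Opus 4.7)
The plan is to use a Cauchy--Schwarz energy argument. Let $R$ denote the ratio set and let
\[
E \;=\; |\{(x,y,x',y') \in ([H]\times[H_*])^2 \;:\; y(a+x') \equiv y'(a+x) \pmod p\}|.
\]
By Cauchy--Schwarz, $|R| \ge (HH_*)^2/E$, so the claim $|R| \gg HH_*$ reduces to showing $E \ll HH_*$ with an absolute constant.

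The diagonal contribution $y=y'$ is exactly $HH_*$: the congruence becomes $y(x-x')\equiv 0 \pmod p$, and since $y \in [H_*]$ and $|x-x'| < H$ are both less than $p$ (by $16H_*^2H < p$), this forces $x=x'$. For the off--diagonal piece, rewrite the congruence as $(y-y')a \equiv y'x - yx' \pmod p$. The right-hand side is an integer of absolute value $\le HH_* < p/16$, so letting $r_{y,y'} \in (-p/2, p/2]$ denote the least absolute residue of $(y-y')a \bmod p$, the congruence has no solutions unless $|r_{y,y'}| < HH_*$, in which case it becomes the $\mathbb{Z}$-equation $y'x - yx' = r_{y,y'}$. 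Setting $g = \gcd(y,y')$, this has an integer solution only when $g \mid r_{y,y'}$, and its solutions form an arithmetic progression with common difference $(y'/g, y/g)$, giving at most $gH/\max(y,y') + 1$ pairs in $[H]^2$.

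The technical heart is to bound
\[
E_B \;\le\; \sum_{\substack{y\ne y' \\ |r_{y,y'}|<HH_*,\; g\mid r_{y,y'}}} \Bigl(\frac{gH}{\max(y,y')} + 1\Bigr)
\]
by $O(HH_*)$, with no logarithmic loss. A naive summation uses only the divisibility condition and produces $\sum \gcd(y,y')/\max(y,y') \asymp H_* \log H_*$, losing a factor $\log H_*$. To recover it I would parametrise the pairs of fixed gcd $g$ as $y = gy_1$, $y' = gy_1'$ with $\gcd(y_1,y_1')=1$ and $d = g(y_1 - y_1')$, so that the divisibility $g\mid r_{y,y'}$ translates into $g\mid k_d$, where $k_d$ is the integer closest to $da/p$, and $|r_d| < HH_*$ becomes a tight arc constraint $|da/p \bmod 1| < HH_*/p$. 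The two Diophantine conditions together are expected to cut the surviving $d$'s by an additional factor $1/g$, giving a gcd-$g$ contribution of order $HH_*/g^2$ and $\sum_g 1/g^2 \ll 1$ overall.

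The main obstacle is that this Diophantine saving is not uniform in $a$. For small $a$ (say $a$ of bounded order), the arc constraint is vacuous because $da/p$ is already tiny, and $k_d = 0$ so $g \mid k_d$ is automatic. However, in that regime there is no wrap-around at all (we have $H_*(a+H) < p$), so $R$ coincides with the set of distinct $\mathbb{Q}$-ratios $y/(a+x)$, which has cardinality $\gg HH_*$ by a direct Farey-type count. The proof must therefore split according to whether $H_*(a+H) < p$ or not: the small-$a$ case is handled by the Farey count in $\mathbb{Q}$, and the large-$a$ case by the three-distance based energy estimate sketched above. Stitching the two regimes together, keeping absolute constants, and using the hypothesis ``$H_*$ sufficiently large'' to absorb lower-order terms is the delicate step; once done, $E \ll HH_*$ and Cauchy--Schwarz yields $|R| \gg HH_*$.
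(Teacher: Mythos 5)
Your reduction of the lemma to the global energy bound $E \ll HH_*$ is where the argument breaks, and your proposed dichotomy does not repair it. You keep the energy route whenever $H_*(a+H)\ge p$, but take for instance $a=p-H-1$ (so $a+H<p$, and $H_*(a+H)\ge p$ as soon as $H_*\ge 2$). Then $a+x\equiv -(H+1-x)\pmod p$ with $u:=H+1-x\in[H]$, the congruence $y(a+x')\equiv y'(a+x)$ becomes $yu'\equiv y'u \pmod p$ with both sides in $[1,HH_*]\subset[1,p/16)$, hence the integer equation $yu'=y'u$; so $E$ is exactly the multiplicative energy of $[H]\times[H_*]$, which is of order $HH_*\log H_*$ (summing $\min(H_*/m,H/n)^2$ over reduced fractions $m/n$ already gives $\gg HH_*\sum_{m\le H_*/2}\varphi(m)/m^2\gg HH_*\log H_*$). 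The same happens for every $a\equiv r_1\overline{d_1}\pmod p$ with $d_1|r_1|$ small, e.g.\ $a=(p+1)/2$, and none of these are excluded by your wrap-around condition. Consequently Cauchy--Schwarz from the full energy can only deliver $|R|\gg HH_*/\log H_*$, and losing that logarithm is precisely what the lemma (and its application to $d(p)\ll p^{1/4}\log p$) cannot afford. Relatedly, the ``factor $1/g$'' Diophantine saving is left as a heuristic: what is actually true is that, since $2H_*\cdot HH_*<p$, all solutions $(d,r_d)$ with $|d|\le H_*$, $|r_d|\le HH_*$ lie on a single line $m(d_1,r_1)$ with $(d_1,r_1)$ primitive, whence $\gcd(y,y')\mid m$; but even with this saving the resulting divisor/harmonic sums still carry a $\log$ unless $d_1$ or $|r_1|$ is large --- exactly the cases exhibited above.

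The paper sidesteps all of this by never bounding the full energy. By Dirichlet's pigeonhole it chooses $\lambda$ with $|\lambda|\le\Delta=(p/H)^{1/2}$ and $A\equiv a\lambda\pmod p$, $|A|\le p/\Delta$, so that (using $16H_*^2H<p$) the congruence becomes the honest integer identity $y(A+\lambda x')=y'(A+\lambda x)$; it then restricts to the set $S$ of pairs $(x,y)$ with $\gcd(y,A+\lambda x)=1$, on which the map $(x,y)\mapsto y/(a+x)$ is injective, and a M\"obius/inclusion--exclusion count gives $|S|\ge (2-\pi^2/6)HH_*+O(H\log H_*)\gg HH_*$. Your fallback for ``small $a$'' (a Farey-type count of distinct rationals via coprime pairs) is essentially this argument; the fix for your proof is to run it, after the pigeonhole rationalization, uniformly for all $a$ --- i.e.\ lower-bound an injective (coprime) subset --- rather than to seek $E\ll HH_*$, which is false in the range you need it.
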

\begin{proof}
    We need to solve the equation 
\[
    y(a+x') \equiv y'(a+x) \pmod p \,,
    \quad \quad 
    x,x'\in [H],\, y,y'\in [H_*] \,.
\]
    Using the Dirichlet principle, we multiply the last expression by an appropriate   
    $\lambda \in \F^*_p$ such that $|\lambda| \le \D$ and  for $A:= a\lambda \pmod p$ one has $|A| \le p/\D$, where our parameter $\D = (p/H)^{1/2}$. 
    Then we obtain the following equation  in $\Z$ 
\begin{equation}\label{tmp:12.08_1}
    y(A+\lambda x') = y'(A+\lambda x)  
\end{equation}
    because by our assumption 
\[
    2 H_* (p/\D + \D H)  = 4 H_* \sqrt{pH} < p \,.
\]
    In \eqref{tmp:12.08_1} we can assume that $(A,\lambda)=1$.
    Let $S \subseteq [H] \times [H_*]$ be the set of all pairs $(x,y)$ such that $(A+\lambda x,y)=1$. 
    Hence identity \eqref{tmp:12.08_1} implies $x=x'$, $y=y'$, provided $(x,y), (x',y') \in S$ and our task is to show $|S| \gg H_* H$ because the last fact implies the lemma in view of the Cauchy--Schwarz inequality \eqref{f:energy_CS}. 
    We have
\[
    |S| = \sum_{x\in [H]}\, \sum_{y\in [H_*]}\,  \sum_{d|(y,A+\lambda x)} \mu (d)
    =\sum_{d\le H_*} \mu(d) \sum_{y\in [H_*],\, d|y}\,\, \sum_{x\in [H],\, d|(A+\lambda x)} 1
    = 
\]
\[
    =
    \sum_{d\le H_*,\, (d,\lambda)=1} \mu(d) \left( \frac{H_*}{d} + \theta'_d \right) \left( \frac{H}{d} + \theta''_d \right) \,,
\]
    where for all $d$ one has $|\theta'_d|\le 1$, $|\theta''_d|\le 1$.
    Here we have used the assumption that $(A,\lambda)=1$. 
    It follows that 
\[
    |S| = H_* H \sum_{1\le d,\, (d,\lambda)=1} \frac{\mu(d)}{d^2} + O(H \log H_*) 
    \ge 
    H_* H \left(2-\frac{\pi^2}{6} \right) + O(H \log H_*) 
    \gg H_* H 
\]
as required. 
$\hfill\Box$
\end{proof}

\bigskip 

Now we are ready to prove Theorem \ref{t:main_int}. 

\bigskip 

\begin{proof} 
Let $P_a = a + \{0,1,\dots,d(p)-1\} = a + P_0 \subseteq R$. 
Also, let $N \subseteq \F_p^*$ be the set of all quadratic non--residues. 
For any positive integer $m\le d(p)-1$ we use the notation $P^{1/m}_0 = \{0,1,\dots,[(d(p)-1)/m]\}$ and, similarly, $P^{1/m}_a = P^{1/m}_0 + a$. 
We have
\begin{equation}\label{f:inclusion_R1}
    \frac{P^{1/2}_0}{P^{1/2}_a} \subseteq R-1 
\end{equation}
because 
\[
    \frac{P^{1/2}_0}{P^{1/2}_a} + 1 \subseteq \frac{P^{1/2}_0+P^{1/2}_a }{P^{1/2}_a} \subseteq \frac{P^{1}_a}{P^{1/2}_a} \subseteq R \,.
\]
Let $k$ be an integer parameter, which we will chose in a moment. 
Since $j P^{1/2k}_0 \subseteq P^{1/2}_{0}$, $j\in [k]$,  it follows from \eqref{f:inclusion_R1} that 
\[
    \frac{P^{1/2k}_0}{P^{1/2}_a} \subseteq (R-1) \cap (2^{-1}\cdot R-2^{-1}) \cap \dots (k^{-1} \cdot R-k^{-1}) \,.
\]
    Using Lemma \ref{l:coprime_ratio} with $H_* = |P_0^{1/2k}|$, $H=|P_a^{1/2}|$ (we can freely assume that $d(p) \ll p^{1/3}$) and formula \eqref{f:R_s} (for residues/non--residues), we get
\[
    d^2 (p) k^{-1} \ll |P^{1/2k}_0| |P^{1/2}_a| \le 
    \frac{p}{2^{k}} + \theta k\sqrt{p}  \ll k\sqrt{p} \,,
\]
    provided $k\gg \log p$. 
    Choosing $k\sim \log p$, we obtain 
    $d(p) \ll p^{1/4} \log p$ as required. 
$\hfill\Box$
\end{proof}

\bigskip 
\noindent{I.D.~Shkredov\\
Steklov Mathematical Institute,\\
ul. Gubkina, 8, Moscow, Russia, 119991}
\\
and
\\
IITP RAS,  \\
Bolshoy Karetny per. 19, Moscow, Russia, 127994\\
and 
\\
MIPT, \\ 
Institutskii per. 9, Dolgoprudnii, Russia, 141701\\
{\tt ilya.shkredov@gmail.com}


\begin{thebibliography}{99}




%
\bibitem{BG}
{\sc J. Bourgain, A. Gamburd, } 
{\em Uniform expansion bounds for Cayley graphs of $SL_2(\F_p)$, } 
Ann. of Math., 167(2):625--642, 2008.



\bibitem{BGS}
{\sc J. Bourgain, A. Gamburd, P. Sarnak, } 
{\em Generalization of Selberg's $\frac{3}{16}$ theorem and affine sieve, } 
Acta mathematica 207.2 (2011) 255--290.



%
\bibitem{BGKS}
{\sc J. Bourgain, M.Z. Garaev, S.V. Konyagin, I.E. Shparlinski, } 
{\em On the hidden shifted power problem, } 
SIAM Journal on Computing, {\bf 41}:6  (2012) 1524--1557.


\bibitem{BMT}
{\sc R. de la Bretèche, M. Munsch, G. Tenenbaum, }
{\em Small G\'al sums and applications, }
Journal of the London Mathematical Society 103.1 (2021): 336--352.


%
\bibitem{Burgess1}
{\sc D.A. Burgess, } 
{\em On character sums and primitive roots, } 
Proc. London Math. Soc. (3), 12 (1962), 179--192.


%
\bibitem{Burgess2}
{\sc D.A. Burgess, } 
{\em A note on the distribution of residues and non--residues, } 
Journal of the London Mathematical Society 1.1 (1963): 253--256.


%
\bibitem{CG}
{\sc J. Cilleruelo,  M.Z. Garaev, } 
{\em Concentration of points on two and three dimensional modular hyperbolas and applications, } Geometric and Functional Analysis 21.4 (2011): 892--904.



%
\bibitem{Dinai}
{\sc O. Dinai, }
{\em Growth in $\SL_2$ over finite fields, }
J. Group Theory 14 (2011): 273--297.


%
\bibitem{He}
{\sc J. He, }
{\em Markov chains on finite fields with deterministic jumps, } arXiv:2010.10668 


%
\bibitem{H}
{\sc H. Helfgott, }
{\em Growth and generation in $\SL_2(\Z/p\Z)$, } Annals of Math. 167 (2008), no. 2, 601--623.




%
\bibitem{IK}
{\sc H. Iwaniec, E. Kowalski, } 
{\em Analytic number theory, } Vol. 53. American Mathematical Soc., 2004.


%
\bibitem{KSY}
{\sc B. Kerr, I.E. Shparlinski, K.H. Yau, }
{\em A refinement of the Burgess bound for character sums, } Michigan Mathematical Journal 69.2 (2020): 227--240.


%
\bibitem{Lubotzky123}
{\sc A. Lubotzky, }
{\em Cayley graphs: eigenvalues, expanders and random walks, }
In: Rowbinson, P. (ed.) Surveys in Combinatorics. London Math. Soc. Lecture Note Ser., vol. 218, pp. 155--189. Cambridge University Press, Cambridge (1995)


%
\bibitem{MKS}
{\sc W. Magnus, A. Karrass, D. Solitar, }
{\em Combinatorial Group Theory, } second edition, Dover Publications, Inc., Mineola, NY, 2004, Presentations of groups in terms of generators and relations.


%
\bibitem{NG_S}
{\sc N.G. Moshchevitin, B. Murphy, I.D. Shkredov, }
{\em Popular Products and Continued Fractions, } Israel J. Math., {\bf 238} (2020) 807--835; DOI:10.1007/s11856-020-2039-3 


%
\bibitem{MMS2} 
{\sc N.G. Moshchevitin, B. Murphy, I.D. Shkredov, }
{\em On Korobov's bound concerning Zaremba's conjecture, }
preprint. 


%
\bibitem{Polya}
{\sc G. P\'olya, } 
{\em \"Uber die Verteilung der quadratischen Reste und Nichtreste, } G\"ottinger Nachrichten (1918), 21--29.


%
\bibitem{RW}
{\sc M. Rudnev, J. Wheeler, }
{\em Incidence bounds with M\"obius hyperbolae in positive characteristic, }  arXiv:2104.10534 (2021).

%
\bibitem{s_Kloosterman}
{\sc I.D. Shkredov, }
{\em Modular hyperbolas and bilinear forms of Kloosterman sums, } J. Number Theory, 220 (2021), 182--211. 


%
\bibitem{s_non_survey}
{\sc I.D. Shkredov, }
{\em Noncommutative methods in Additive Combinatorics and Number Theory, }
preprint.


%
\bibitem{s_Chevalley}
{\sc I.D. Shkredov, }
{\em Growth in Chevalley groups relatively to parabolic subgroups and some applications, }  
arXiv:2003.12785 (2020).


%
\bibitem{s_CDG}
{\sc I.D. Shkredov, }
{\em On multiplicative Chung--Diaconis--Graham process, }  arXiv:2106.09615 (2021).



%
\bibitem{sv}
{\sc I.D. Shkredov, I.V. Vyugin, } {\em On additive shifts of multiplicative subgroups, } Mat. Sbornik {\bf 203}:6 (2012), 81--100.




%
\bibitem{Dickson}
{\sc M. Suzuki}, 
{\em Group Theory I, } Springer--Verlag, New York, 1982.


%



%
\bibitem{TV}
{\sc T.~Tao, V.~Vu, }{\em Additive combinatorics,} Cambridge University Press 2006.

%
\bibitem{WW_hyp}
{\sc A. Warren, J. Wheeler, }
{\em Incidences of M\"obius transformations in $\F_p$, }
arXiv:2107.12286v1 [math.CO] 26 Jul 2021. 


%
\bibitem{Vin1}
{\sc I.M. Vinogradov, } 
{\em Sur la distribution des r\'esidus et des non--r\'esidus des puissances, }
J. Phys.--Math. Soc. Perm. No. 1 (1918), 94--96.


%
\bibitem{Vin2}
{\sc I.M. Vinogradov, } 
{\em Selected works. } With a biography by K. K. Mardzhanishvili. Translated from the
Russian by Naidu Psv. Translation edited by Yu. A. Bakhturin. Springer-Verlag, Berlin, 1985.


\end{thebibliography}
\end{document}